\begin{document}
\newtheorem{cor}{Corollary}[section]
\newtheorem{theorem}[cor]{Theorem}
\newtheorem{prop}[cor]{Proposition}
\newtheorem{lemma}[cor]{Lemma}
\newtheorem{sublemma}[cor]{Sublemma}
\newtheorem{stat}[cor]{Statement}
\theoremstyle{definition}
\newtheorem{definition}[cor]{Definition}
\theoremstyle{remark}
\newtheorem{remark}[cor]{Remark}
\newtheorem{example}[cor]{Example}
\newtheorem{question}[cor]{Question}
\newtheorem{conjecture}[cor]{Conjecture}

\newcommand{\cA}{{\mathcal A}}
\newcommand{\cB}{{\mathcal B}}
\newcommand{\cC}{{\mathcal C}}
\newcommand{\cD}{{\mathcal D}}
\newcommand{\cE}{{\mathcal E}}
\newcommand{\cF}{{\mathcal F}}
\newcommand{\cG}{{\mathcal G}}
\newcommand{\cM}{{\mathcal M}}
\newcommand{\cL}{{\mathcal L}}
\newcommand{\cN}{{\mathcal N}}
\newcommand{\cP}{{\mathcal P}}
\newcommand{\cQ}{{\mathcal Q}}
\newcommand{\cS}{{\mathcal S}}
\newcommand{\cV}{{\mathcal V}}
\newcommand{\cT}{{\mathcal T}}
\newcommand{\cW}{{\mathcal W}}
\newcommand{\cCP}{{\mathcal C\mathcal P}}
\newcommand{\cML}{{\mathcal M\mathcal L}}
\newcommand{\cFML}{{\mathcal F\mathcal M\mathcal L}}
\newcommand{\cGH}{{\mathcal G\mathcal H}}
\newcommand{\cQF}{{\mathcal Q\mathcal F}}
\newcommand{\cMF}{{\mathcal M\mathcal F}}
\newcommand{\dwp}{d_{WP}}
\newcommand{\bC}{{\mathbb C}}
\newcommand{\bS}{{\mathbb S}}
\newcommand{\bP}{{\mathbb P}}
\newcommand{\bCP}{{\mathbb{CP}}}
\newcommand{\N}{{\mathbb N}}
\newcommand{\bR}{{\mathbb R}}
\newcommand{\bZ}{{\mathbb Z}}
\newcommand{\bH}{{\mathbb H}}
\newcommand{\Kt}{\tilde{K}}
\newcommand{\Mt}{\tilde{M}}
\newcommand{\dr}{{\partial}}
\newcommand{\betab}{\overline{\beta}}
\newcommand{\kappab}{\overline{\kappa}}
\newcommand{\pib}{\overline{\pi}}
\newcommand{\taub}{\overline{\tau}}
\newcommand{\gb}{\overline{g}}
\newcommand{\hb}{\overline{h}}
\newcommand{\ub}{\overline{u}}
\newcommand{\Bb}{\overline{B}}
\newcommand{\Kb}{\overline{K}}
\newcommand{\Sigmab}{\overline{\Sigma}}
\newcommand{\gd}{\dot{g}}
\newcommand{\hd}{\dot{h}}
\newcommand{\Id}{\dot{I}}
\newcommand{\Jd}{\dot{J}}
\newcommand{\diff}{\mbox{Diff}}
\newcommand{\isom}{\mathrm{Isom}}
\newcommand{\devb}{\overline{\mbox{dev}}}
\newcommand{\devt}{\tilde{\mbox{dev}}}
\newcommand{\vol}{\mbox{Vol}}
\newcommand{\hess}{\mathrm{Hess}}
\newcommand{\gr}{\mathrm{gr}}
\newcommand{\Gr}{\mathrm{Gr}}
\newcommand{\supp}{\mathrm{supp}}
\newcommand{\cb}{\overline{c}}
\newcommand{\db}{\overline{\partial}}
\newcommand{\hgr}{h_{gr}}
\newcommand{\Sigmat}{\tilde{\Sigma}}
\newcommand{\wt}[1]{\widetilde{#1}}

\newcommand{\al}{\alpha}
\newcommand{\eps}{\epsilon}

\newcommand{\cunc}{{\mathcal C}^\infty_c}
\newcommand{\cun}{{\mathcal C}^\infty}
\newcommand{\sig}{\sigma}
\newcommand{\dev}{\mathrm{dev}}
\newcommand{\hol}{\mathrm{hol}}
\newcommand{\PSL}{\mathrm{PSL}}
\newcommand{\dmin}{d_{\mathrm{min}}}
\newcommand{\dmax}{d_{\mathrm{max}}}
\newcommand{\Dom}{\mathrm{Dom}}
\newcommand{\dn}{d_\nabla}
\newcommand{\ded}{\delta_D}
\newcommand{\delmin}{\delta_{\mathrm{min}}}
\newcommand{\delmax}{\delta_{\mathrm{max}}}
\newcommand{\hmin}{H_{\mathrm{min}}}
\newcommand{\maxi}{\mathrm{max}}
\newcommand{\hull}{\mathrm{hull}}
\newcommand{\oL}{\overline{L}}
\newcommand{\oP}{{\overline{P}}}
\newcommand{\xb}{{\overline{x}}}
\newcommand{\yb}{{\overline{y}}}
\newcommand{\bRan}{\mathrm{Ran}}
\newcommand{\tgamma}{\tilde{\tau}}
\newcommand{\cotan}{\mbox{cotan}}
\newcommand{\area}{\mbox{Area}}
\newcommand{\lambdat}{\tilde\lambda}
\newcommand{\xt}{\tilde x}
\newcommand{\bCt}{\tilde C}
\newcommand{\St}{\tilde S}

\newcommand{\re}{\mathrm{Re}}
\newcommand{\sch}{\mathrm{Sch}}
\newcommand{\ric}{\mathrm{Ric}}
\newcommand{\scal}{\mathrm{Scal}}
\newcommand{\ext}{\mathrm{ext}}
\newcommand{\diam}{\mathrm{diam}\,}

\newcommand{\II}{I\hspace{-0.1cm}I}
\newcommand{\III}{I\hspace{-0.1cm}I\hspace{-0.1cm}I}
\newcommand{\note}[1]{{\small {\color[rgb]{1,0,0} #1}}}

\title[Delaunay circle patterns]{Properness for circle packings and Delaunay circle patterns on complex projective structures}

\author{Jean-Marc Schlenker}
\thanks{J.-M. S. was partially supported by University of Luxembourg IRP NeoGeo and by FNR projects INTER/ANR/15/11211745 and OPEN/16/11405402. J.-M. S. also acknowledges support from U.S. National Science Foundation grants DMS-1107452, 1107263, 1107367 ``RNMS: GEometric structures And Representation varieties'' (the GEAR Network).}
\address{University of Luxembourg,
Department of mathematics, 
University of Luxembourg, 
Maison du nombre, 6 avenue de la Fonte,
L-4364 Esch-sur-Alzette, Luxembourg
}
\email{jean-marc.schlenker@uni.lu}

\author{Andrew Yarmola}
\email{andrew.yarmola@uni.lu}

\date{v1, \today}

\begin{abstract}
  We consider circle packings and, more generally, Delaunay circle patterns --- arrangements of circles arising from a Delaunay decomposition of a finite set of points --- on surfaces equipped with a complex projective structure. Motivated by a conjecture of Kojima, Mizushima and Tan, we prove that the forgetful map sending a complex projective structure admitting a circle packing with given nerve (resp. a Delaunay circle pattern with given nerve and intersection angles) to the underlying complex structure is proper.
\end{abstract}

\maketitle

\section{Introduction and main results}

\subsection{Outline of the main results}

Circle packings have been of mathematical interest throughout history, going as far back as the works of Apollonius of Perga. A circle packing is, informally, a set of disjoint disks and the nerve of a circle packing is a graph which records the tangency relations between these disks. One of the main historic results of interest is the Koebe Circle Packing Theorem \cite{koebe}: any 3-connected graph on the sphere can be realized as the nerve of a circle packing, which is unique up to M\"obius transformations. This beautiful theorem was rediscovered and generalized by Thurston, who also related it to hyperbolic geometry and Andreev's Theorem \cite{Andreev,Andreev-ideal}. Thurston later showed that circle packings provide a discrete version of the Riemann Mapping Theorem, leading to new developments and applications, see e.g. \cite{rodin-sullivan,bowers-stephenson_conformalI,bobenko-springborn,KSS} or \cite{stephenson-notices} for a nice survey.

In addition to the sphere, circle packings have been extensively studied for Euclidean and hyperbolic surfaces \cite[Sections 13.6 and 13.7]{thurston-notes}. However, the notion of circle packing can be considered for``weaker'' geometric structures, namely, complex projective structures (see Section \ref{ssc:cp}). Throughout this paper, $S$ will be an oriented closed surface of genus $g\geq 2$, generally equipped with a complex projective structure $\sigma$. We will let $\cC$ denote the space of complex projective structures on $S$, considered up to isotopy. 

A complex projective structure has an underlying complex structure, leading to a conjecture of Kojima, Mizushima and Tan \cite{KMT,KMT3}, rephrased below.

\begin{conjecture} \label{cj:main}
Let $\tau$ be a polygonal cell decomposition of $S$, and let $c$ be a complex structure on $S$. There is a unique complex projective structure $\sigma$ on $S$ with underlying complex structure $c$ admitting a circle packing with nerve $\tau$.
\end{conjecture}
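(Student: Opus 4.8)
The plan is to deduce Conjecture~\ref{cj:main} by combining three ingredients: the local structure of the relevant moduli space, due to Kojima--Mizushima--Tan; the properness statement announced above, which is the main content of this paper; and elementary covering space theory together with the contractibility of Teichm\"uller space. Write $\cP_\tau\subset\cC$ for the set of complex projective structures on $S$ admitting a circle packing with nerve $\tau$, and let $\Phi\colon\cP_\tau\to\cT$ be the forgetful map to the Teichm\"uller space $\cT$ of $S$, sending $\sigma$ to its underlying complex structure.

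First I would recall that $\cP_\tau$ is a nonempty real-analytic manifold of dimension $6g-6=\dim_{\bR}\cT$ and that $\Phi$ is a local homeomorphism. A circle packing is recorded by the family of its circles, equivalently by the centres and radii of their lifts in the developing image, subject to equivariance under the holonomy and to one tangency equation per edge of $\tau$; comparing the number of free parameters with the number of constraints, via $\chi(S)=2-2g$, shows that the solution set is a manifold of dimension $6g-6$ and that $d\Phi$ is everywhere an isomorphism --- this is exactly the Kojima--Mizushima--Tan infinitesimal rigidity. Non-emptiness of $\cP_\tau$ is known from circle packing existence results on surfaces.

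Next I would invoke the properness of $\Phi$ proved in this paper: the preimage of any compact subset of $\cT$ is compact. A proper local homeomorphism between manifolds onto a connected base is a covering map, and its fibres, being both compact and discrete, are finite; let $d\geq 1$ be the degree. Since $\cT$ is homeomorphic to $\bR^{6g-6}$, hence simply connected, this covering is trivial: $\cP_\tau$ is a disjoint union of $d$ copies of $\cT$, each mapped homeomorphically onto $\cT$ by $\Phi$. In particular every complex structure $c$ underlies exactly $d$ elements of $\cP_\tau$; this already proves the existence half of Conjecture~\ref{cj:main} and reduces the uniqueness half to the single assertion $d=1$, equivalently to the connectedness of $\cP_\tau$.

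The main obstacle is the properness of $\Phi$ --- precisely the theorem proved here. One must show that if $\sigma_n\in\cP_\tau$ has underlying complex structures staying in a compact part of $\cT$, then a subsequence converges in $\cC$ to some $\sigma_\infty$ still carrying a circle packing (resp.\ Delaunay circle pattern) with the prescribed nerve and angles; the enemies are degenerations of the combinatorial data --- circles collapsing to points or expanding without bound, the developing map becoming singular, the holonomy leaving the good locus --- and ruling these out is where the Delaunay condition and the intersection-angle bounds supply the needed uniform estimates. The remaining, less central, point is the connectedness of $\cP_\tau$ (equivalently $d=1$), which I would approach by a deformation argument joining any two circle packings with nerve $\tau$ inside $\cP_\tau$; granting this together with properness, $\Phi$ is a homeomorphism and Conjecture~\ref{cj:main} follows.
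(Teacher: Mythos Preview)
The statement you are trying to prove is an \emph{open conjecture}; the paper does not prove it. The paper establishes only step~(3) of the four-step strategy, properness of $f_\tau$ (Theorem~\ref{tm:packing}), and explicitly lists steps (1), (2) and (4) as unresolved. Your proposal does not fill these gaps --- it assumes them. You attribute to Kojima--Mizushima--Tan the fact that $\cC_\tau$ is a smooth $(6g-6)$-manifold and that the forgetful map is a local diffeomorphism, but this is known only for one-vertex triangulations and, via the double-doubling argument, at quasi-Fuchsian points (see the Remark after Corollary~\ref{cor:variety1}); in general the paper shows only that $\cC_\tau$ is a real (semi)algebraic set of \emph{formal} dimension $6g-6$. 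Your parameter-versus-constraint count is not a proof of transversality or of local injectivity. Likewise, your treatment of connectedness (step~(4)) is a one-line promise of ``a deformation argument'', not an argument; this is precisely the part of the conjecture that remains open.

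A further slip: you take $\cP_\tau\subset\cC$ to be a set of projective structures, whereas the correct moduli space is $\cC_\tau$, the space of \emph{pairs} $(\sigma,\cP)$. A priori a given $\sigma$ could carry several non-isotopic packings with nerve $\tau$, so the projection $\cC_\tau\to\cC$ need not be injective and your $\cP_\tau$ need not inherit a manifold structure from $\cC_\tau$. In short, your outline is exactly the strategy the paper sketches in the introduction, but you have treated as established theorems precisely the steps the paper flags as open problems.
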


When $S$ is the sphere, this conjecture reduces to the Koebe Circle Packing Theorem.

A natural strategy to prove this conjecture is to consider the space $\cC_\tau$ of complex projective structures on $S$ equipped with a circle packing with nerve $\tau$ (see Section \ref{ssc:packings} for a proper definition), and the map $f_\tau:\cC_\tau\to \cT$ sending a complex projective structure to the underlying complex structure, where $\cT$ denotes the Teichm\"uller space of $S$. Conjecture \ref{cj:main} would follow from the following:
\begin{enumerate}
\item $\cC_\tau$ is a manifold of dimension $6g-6$,
\item $f_\tau$ is locally injective,
\item $f_\tau$ is proper,
\item $\cC_\tau$ is connected.
\end{enumerate}
Points (1)-(3) and invariance of domain would imply that $f_\tau$ is a proper local homeomorphism, and therefore a covering map. Here, we focus on the third point and our first contribution is the proof of this properness result, see Theorem \ref{tm:packing}. We believe that points (1) and (2) might be amenable to other, more analytic methods in the spirit of \cite{scannell-wolf}. Note however, that properness alone and a degree-type argument might be sufficient to prove existence in Conjecture \ref{cj:main}.

Our second contribution is to put Conjecture \ref{cj:main} in the somewhat more general context of {\em Delaunay circle patterns}. A Delaunay circle pattern is, informally, the pattern of circles defined by the Delaunay decomposition of a finite set of points, see Section \ref{ssc:delaunay}. Circle packings can be seen as special cases of Delaunay circle patterns with all intersection angles equal to $\pi/2$. We believe that Conjecture \ref{cj:main} can be extended from circle packings to Delaunay circle patterns, and the properness result we prove is actually for Delaunay circle patterns, see Theorem \ref{tm:delaunay}.

The proofs of our properness results are based on 3-dimensional hyperbolic geometry. To each complex projective structure $\sigma$ on $S$, one can associate a {\em hyperbolic end} $E(\sigma)$ having $(S,\sigma)$ as its ideal boundary, see Section \ref{ssc:ends}. We show that a Delaunay circle pattern on $(S,\sigma)$ corresponds to an ``ideal polyhedron'' in $E(\sigma)$. Our third contribution is to provide a description of circle packings (resp. Delaunay circle patterns) in terms of complete hyperbolic metrics of finite area on the punctured surface $S_{g,n}$ satisfying some ``balancing'' properties, see Proposition \ref{prop:balanced1} (resp. Proposition \ref{prop:balanced2}). Although this description is not formally necessary for our properness results, we believe that it helps with the exposition and is of independent interest. In particular, we hope that ``balancing'' properties can lead to proofs of points (1) and (2) above.

\subsection{Complex projective structures and round disks}
\label{ssc:cp}

A {\em complex projective structure} $\sig$ on $S$ is a maximal atlas of charts to $\bCP^1$ where the transition maps are restrictions of M\"obius transformations. One can think of $\sig$ as a pair $(\dev_\sig, \hol_\sig)$, where $\dev_\sig : \wt{S} \to \bCP^1$ is the developing map and $\hol_\sig : \pi_1 S \to \PSL(2,\bC)$ is the holonomy. We let $\cC$ denote the space of complex projective structures on $S$, considered up to isotopy. See, \cite{dumas-survey} for more details.

Since M\"obius transformations send circles to circles in $\bCP^1$, there is a well-defined notion of disks and circles on $(S,\sig)$. In particular,

\begin{definition}  A \emph{round disk} is a connected closed subset $D$ of $\wt{S}$ such that $\left.\dev_{\sig}\right|_{D}$ is injective and $\dev_\sig(D)$ is a closed disk in $\bCP^1$. A {\it round circle} is the boundary of a round disk. When the covering map $\wt{S} \to S$ is injective on a round disk, we call the {\it image} in $S$ an {\it embedded disk} (resp. {\it embedded circle}).
\end{definition}

\subsection{Circle packings}
\label{ssc:packings}

A {\it circle packing} on $(S,\sigma)$ is a finite collection $\{D_1,\cdots, D_n\}$ of \emph{embedded disks} in $(S,\sigma)$ with disjoint interiors such that each connected component of $S\setminus \bigcup_{i=1}^n D_i$ is M\"obius equivalent to an ideal hyperbolic polygon.

\begin{figure}[htb]
\begin{center}
\begin{overpic}[scale=.4]{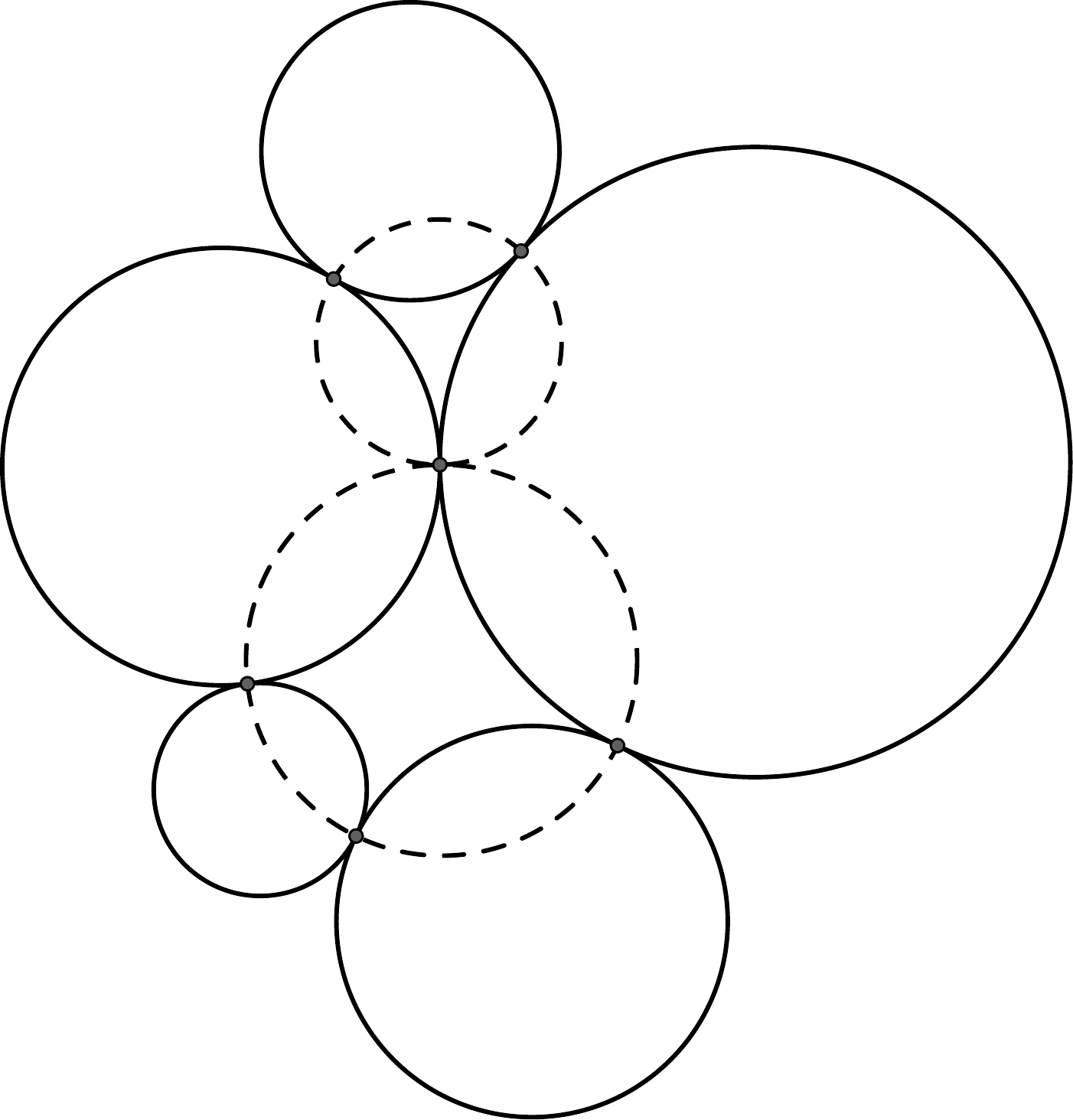}
\end{overpic}
\end{center}
\caption{A piece of a circle packing with dual circles represented with dashes.}
\label{fig:circle_pack}
\end{figure}

It follows from this definition that each complementary region is simply connected and its vertices lie on an embedded circle in $(S,\sigma)$. We call these \emph{dual circles}, see Figure \ref{fig:circle_pack}.

\begin{definition}
The {\it nerve} $\tau_\cP$ of a circle packing $\cP$ is a cell decomposition of $S$ with a vertex for each element of $\cP$, an edge between two vertices for each tangency point between corresponding disks, and a face for every complementary region. See Figure \ref{fig:circle_nerve}.
\end{definition}

\begin{figure}[htb]
\begin{center}
\begin{overpic}[scale=.4]{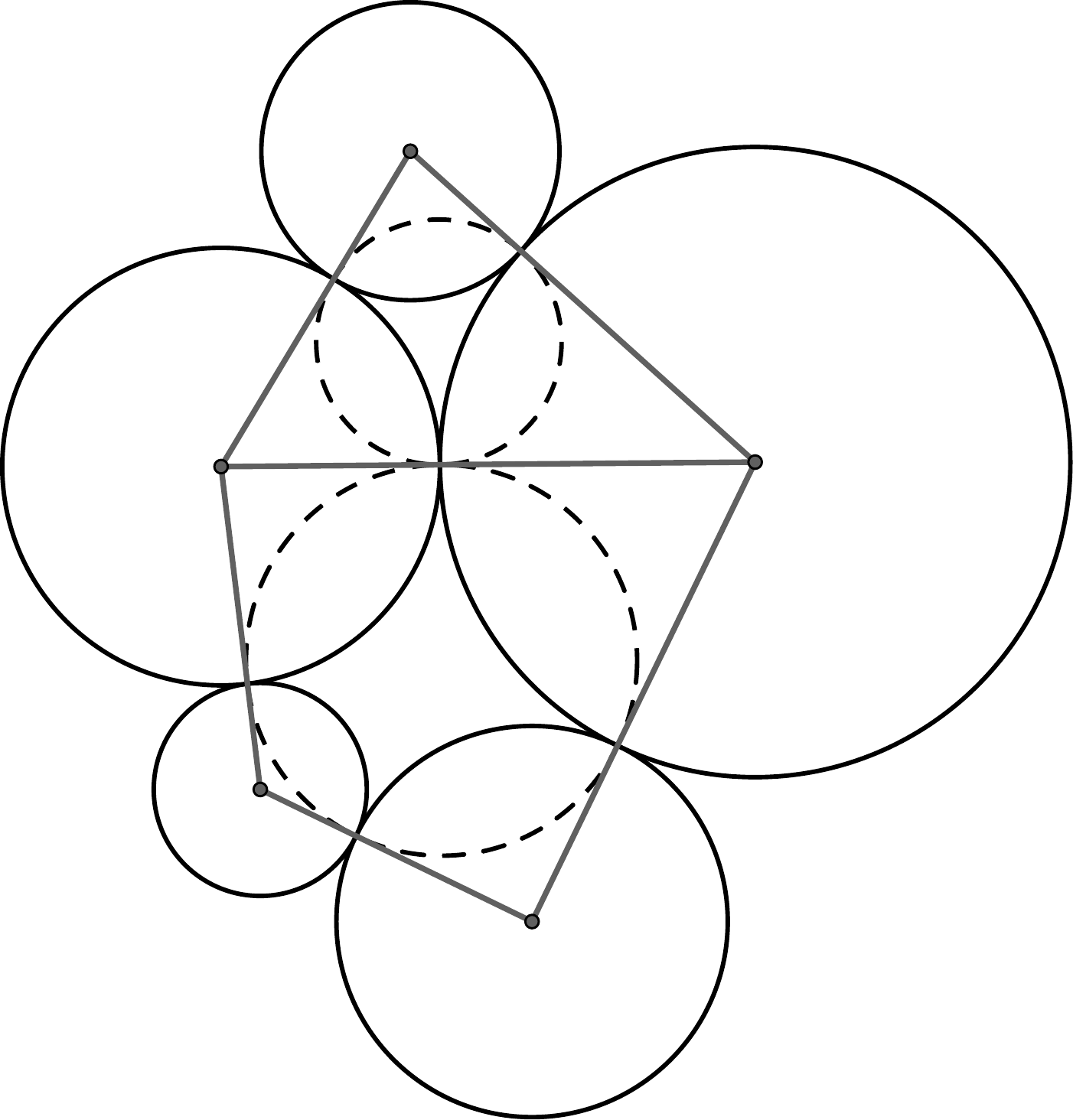}
\end{overpic}
\end{center}
\caption{A piece of a nerve of a circle packing.}
\label{fig:circle_nerve}
\end{figure}

In fact, $\tau_\cP$ is a special type of cell decomposition where the $2$-cells are polygons.

\begin{definition} A \emph{polygonal cell decomposition} $\eta$ of $S$ is a partition of $S$ into cells such that
\begin{enumerate}
\item each $2$-cell is a polygon.
\item every $0$-cell meets at least three $1$-cells.
\item the 1-skeleton, $\eta_1$, lifts to a simple graph in $\wt{S}$.
\end{enumerate}
Condition (3) implies $\wt{\eta}_1$ will have no loops or double edges.
\end{definition}

\subsection{A properness result for circle packings}

In the previous section, we assigned a cell decomposition to a circle packing. To go in reverse, we consider the space of all complex projective structures that admit a circle packing with fixed combinatorics.

\begin{definition}
Let $\tau$ be a polygonal cell decomposition of $S$. Define $\cC_\tau$ to be the space of pairs $(\sigma, \cP)$ where $\sigma\in \cC$ and $\cP$ is a circle packing on $(S,\sigma)$ with nerve isotopic to $\tau$. The topology is inherited from the bundle of round disks on $\cC$.
\end{definition}
  
Let $\cT$ denote the Teichm\"uller space of $S$. There is a forgetful map $f:\cC\to \cT$ sending a complex projective structure to the underlying complex structure. This gives the map $f_\tau=f\circ \Pi_1:\cC_\tau\to \cT$, where $\Pi_1$ denotes the projection on the first factor. Kojima, Mizushima and Tan \cite{KMT,KMT3} made the following conjecture, which is a more precise version of Conjecture \ref{cj:main}.

\begin{conjecture} \label{cj:kmt}
Let $\tau$ be a triangulation of $S$ that lifts to a simple graph in $\wt{S}$. Then $f_\tau:\cC_\tau\to \cT$ is a homeomorphism.
\end{conjecture}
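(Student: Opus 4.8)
The plan is to establish Conjecture \ref{cj:kmt} through the four-step program sketched in the introduction, glued together by invariance of domain and elementary covering-space theory. Write $n$ for the number of vertices of $\tau$, and recall that $\cT$ is homeomorphic to a ball, hence contractible. Suppose one proves: (1) $\cC_\tau$ is a manifold of dimension $6g-6 = \dim\cT$; (2) $f_\tau$ is locally injective; (3) $f_\tau$ is proper; (4) $\cC_\tau$ is connected. Then by (1), (2) and invariance of domain $f_\tau$ is a local homeomorphism; by (3) it is a covering map onto $\cT$; contractibility of $\cT$ makes this covering trivial, so $\cC_\tau$ is a disjoint union of copies of $\cT$; and (4) forces a single copy, whence $f_\tau$ is a homeomorphism.

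For step (1) I would use the balancing characterization of Proposition \ref{prop:balanced1}: a point of $\cC_\tau$ is recorded by a complete finite-area hyperbolic metric on $S_{g,n}$ subject to a system of balancing constraints, so that $\cC_\tau$ is cut out as the zero set of a balancing map defined on (an open subset of) $\cT_{g,n}$, a space of dimension $6g-6+2n$. The content is to compute the differential of this map at a solution and show it has maximal rank $2n$, i.e. that $0$ is a regular value; this is a finite-dimensional transversality statement that should reduce to a Poincar\'e--Hopf / Euler-characteristic count on $\tau$. For step (2), given step (1) it suffices to show that an infinitesimal deformation of $(\sigma,\cP)$ fixing the underlying complex structure is trivial. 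This is an infinitesimal rigidity statement; I would attack it either by a second-variation argument in the spirit of \cite{scannell-wolf}, or by translating it through the hyperbolic-ends picture of Section \ref{ssc:ends} into infinitesimal rigidity of the associated ideal polyhedron inside $E(\sigma)$ relative to its conformal boundary, pairing the variation of the metric at infinity against the variation of the combinatorial data by a Schl\"afli- or Weil--Petersson-type identity.

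Step (3) is exactly Theorem \ref{tm:packing}, proved in this paper via $3$-dimensional hyperbolic geometry, so it may be cited as is. For step (4), by the covering-space picture above $\cC_\tau$ is a disjoint union of copies of $\cT$ indexed by a fibre $f_\tau^{-1}(c_0)$, which is finite since $f_\tau$ is proper with discrete fibres; the remaining task is to show this fibre is a single point. I would attempt a continuity argument: connect two circle packings with nerve $\tau$ on a fixed complex structure $c_0$ by deforming the associated ideal polyhedron in a fixed hyperbolic end while preserving the combinatorics $\tau$, invoking a Cauchy-type rigidity statement for such polyhedra to force the two to coincide; failing a direct path, one may instead compute the degree of $f_\tau$ and show it equals $1$, which by the covering description is equivalent to connectedness of $\cC_\tau$.

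The main obstacle is the combination of steps (2) and (4): properness is already in hand and step (1) amounts to a transversality computation, but local injectivity and uniqueness of the fibre are genuine rigidity questions with no purely formal proof, and are precisely where new analytic or synthetic-hyperbolic input is required. As the introduction notes, if one only wants existence in Conjecture \ref{cj:main} rather than uniqueness, steps (1), (3) and a degree argument already suffice; so the sharpened rigidity of step (2) together with the connectedness of step (4) is the real crux of the full homeomorphism statement.
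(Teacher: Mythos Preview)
The statement labeled \texttt{cj:kmt} is a \emph{conjecture}, not a theorem proved in the paper. The paper explicitly leaves it open and contributes only step (3) of the four-step program (Theorem \ref{tm:packing}); steps (1), (2), and (4) are not established anywhere in the text. So there is no ``paper's own proof'' to compare against.

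Your proposal is not a proof either, and you are candid about this: you write that steps (2) and (4) ``are genuine rigidity questions with no purely formal proof'' and that they are ``precisely where new analytic or synthetic-hyperbolic input is required.'' What you have written is a restatement of the deformation strategy already outlined in the paper, together with plausible but unexecuted suggestions for the missing pieces. In particular: for step (1) you assert that the balancing map has surjective differential but do not carry out the computation (the paper only gets a \emph{formal} dimension count via Corollary \ref{cor:variety1}, and smoothness is discussed only conditionally in the remarks); for step (2) you gesture at a Schl\"afli- or Scannell--Wolf-type argument without supplying one; and for step (4) your proposed continuity/degree argument is circular, since a Cauchy-type rigidity statement for ideal polyhedra in a fixed hyperbolic end with fixed conformal boundary is essentially the uniqueness statement you are trying to prove.

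In short, the gap is that the conjecture is open, and your proposal correctly identifies the program but does not close it. If you intend this as a proof, you must actually prove surjectivity of the differential in step (1), supply the infinitesimal rigidity argument in step (2), and give a non-circular argument for step (4).
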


The simplest example of this statement is obtained when $S$ is a sphere, reducing the conjecture to the classical Koebe Circle Packing Theorem \cite{koebe}. The conjecture was also proven in \cite{mizushima} for the case of the hexagonal one vertex triangulation of the torus.

A possible strategy towards Conjecture \ref{cj:kmt} is to use a ``deformation'' approach based on the following four steps:
\begin{enumerate}
\item $\cC_\tau$ is a manifold of dimension $6g-6$,
\item $f_\tau$ is locally injective.
\item $f_\tau$ is proper.
\item $\cC_\tau$ is connected.
\end{enumerate}
A proof of steps (1)-(3) and invariance of domain would imply that $f_\tau$ is a covering map, and, adding step (4), would give that $f_\tau$ is a homeomorphism. Steps (1) and (4) could be observed by showing that $\cC_\tau$ is a smooth variety of the correct dimension, see \cite[Main Theorem 2]{KMT} for the one vertex case. Here, we prove step (3).

\begin{theorem} \label{tm:packing}
Let $\tau$ be a polygonal cell decomposition of $S$. Then $f_\tau$ is proper.
\end{theorem}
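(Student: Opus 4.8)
\section*{Proof proposal for Theorem \ref{tm:packing}}

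\textbf{Overall strategy.}
The plan is to deduce the properness of $f_\tau$ from the 3-dimensional hyperbolic geometry of the associated hyperbolic ends, exploiting the correspondence (promised in the introduction) between circle packings on $(S,\sigma)$ and ``ideal polyhedra'' inside the hyperbolic end $E(\sigma)$. Properness is equivalent to the following statement: given a sequence $(\sigma_k,\cP_k)\in\cC_\tau$ such that $f_\tau(\sigma_k,\cP_k)=c_k$ converges to a point $c_\infty\in\cT$, one can extract a subsequence along which $(\sigma_k,\cP_k)$ converges in $\cC_\tau$. So I would fix such a sequence and argue by compactness-and-contradiction, ruling out all possible modes of degeneration.

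\textbf{Key steps.}
First I would record the combinatorial constraints: since the nerve is the fixed cell decomposition $\tau$, the number of disks $n$, the valences, and the combinatorics of the complementary ideal polygons are all constant along the sequence. Next, using that $c_k\to c_\infty$ in $\cT$, I would fix a smooth conformal model of $(S,c_\infty)$, pull back the $c_k$ to metrics $g_k$ on $S$ converging smoothly to $g_\infty$, and realize each $\sigma_k$ as $c_k$ together with a holomorphic quadratic differential (or, more geometrically, via its Thurston parametrization: a measured lamination $\mu_k$ giving the grafting of a hyperbolic surface $m_k$). The circle packing data then records, for each disk $D_i^k$, a point (the center, in the hyperbolic-metric-on-disk sense) and a radius; equivalently, it gives a finite configuration of points on $(S,\sigma_k)$ together with the dual circles through the polygon vertices. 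The heart of the argument is to bound these radii away from $0$ and $\infty$ and to keep the centers in a compact part of the configuration space. For the upper bound on radii and the non-collapsing of disks, I would use that the complementary regions are ideal polygons: an ideal polygon inscribed in a circle of radius $r$ has area bounded below in terms of $r$ and the combinatorics, while the total area of $(S,\sigma_k)$ in the underlying conformal class (measured, say, by a fixed background metric conformal to $g_k$) stays bounded because $c_k\to c_\infty$; this prevents any circle from becoming degenerate. Dually, to prevent a disk from shrinking to a point I would use the tangency graph $\tau$: a chain of tangent disks in $\tau$ forces comparability of radii along edges (a tangency imposes a two-sided bound between the radii of adjacent disks once one controls the local geometry), and since $\tau$ is connected all radii are comparable, so none can degenerate in isolation — a global collapse is then excluded by the area/combinatorics bound again.

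\textbf{Passing to the limit and the main obstacle.}
Having bounded the radii and located the centers in a compact set, I would extract a subsequence so that the disks $D_i^k$ converge to round disks $D_i^\infty$ in $(S,\sigma_\infty)$ for some limiting complex projective structure $\sigma_\infty$ with underlying complex structure $c_\infty$; here I need that the projective structures $\sigma_k$ themselves do not escape to infinity, which I would get from the fact that a circle packing with nerve $\tau$ on $(S,\sigma_k)$ pins down the holonomy and developing map up to a compact amount of data once $c_k$ and the disk radii/centers are controlled (concretely: the union of the closed disks and their dual circles covers $S$ with controlled overlaps, giving a covering of $\wt S$ by round disks in $\bCP^1$ on which the developing map is determined up to a M\"obius transformation, and these patch with uniformly bounded transition data). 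Finally I would check that the limiting configuration $\{D_i^\infty\}$ is still a genuine circle packing with nerve $\tau$ — the interiors remain disjoint (disjointness is closed, and the non-collapsing prevents interiors from becoming empty), tangencies persist, and each complementary region is still an ideal polygon with the right combinatorics — so $(\sigma_\infty,\cP_\infty)\in\cC_\tau$ and $f_\tau(\sigma_\infty,\cP_\infty)=c_\infty$, proving properness.

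\textbf{Where the difficulty lies.}
I expect the main obstacle to be the two-sided control on the radii of the disks, i.e. the simultaneous exclusion of (a) a single disk collapsing, (b) a subfamily of disks collapsing while others stay large, and (c) a disk swelling to fill the surface. Controlling (a) via tangency-chain comparability is clean combinatorially but requires care because the relevant ``comparability constant'' along an edge depends on the ambient projective (not just conformal) geometry near the tangency point, which is exactly what one is trying to control; the resolution is to run the estimates in the hyperbolic end $E(\sigma_k)$, where the disks become totally geodesic (or equidistant) planes and the ideal polygons become honest ideal polyhedra, and to use that $c_k\to c_\infty$ controls the ``bending'' of the boundary of $E(\sigma_k)$ — this is the point where the 3-dimensional picture, rather than a purely 2-dimensional argument on $S$, does the essential work. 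Making the end-geometry estimates uniform, and ensuring the limiting ideal polyhedron is nondegenerate, is the technical crux.
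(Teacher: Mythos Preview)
Your general architecture --- take a sequence $(\sigma_k,\cP_k)$ with $f_\tau(\sigma_k,\cP_k)\to c_\infty$, pass to the hyperbolic end $E(\sigma_k)$, control the geometry, extract a limit --- matches the paper's. In the paper, Theorem \ref{tm:packing} is in fact reduced in one line to the Delaunay version (Theorem \ref{tm:delaunay}) via the ``midpoint'' construction, and the real work is done there. But your proposal has a genuine gap at the step you yourself flag as the crux.

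You write that you would ``use that $c_k\to c_\infty$ controls the `bending' of the boundary of $E(\sigma_k)$.'' This implication is in the wrong direction, and it is false as stated: over each fixed conformal structure there is an entire $(6g-6)$-dimensional fiber of projective structures, with bending laminations $\mu_\sigma$ ranging over all of $\cML$, so knowing $c_k$ tells you nothing about $\mu_{\sigma_k}$. In the paper the bending is controlled \emph{by the circle pattern combinatorics alone}, independently of $c_k$: one realizes the nerve $\eta^*$ so that each vertex sits under a support half-space, and then the Bridgeman--Canary Roof Lemma bounds $i(\gamma,\mu_\sigma)$ by $\pi$ times a purely combinatorial edge count (Lemma \ref{lem:lambda}). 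Only after one knows $\mu_{\sigma_k}$ lies in a compact $K_{\cML}\subset\cML$ does the hypothesis $c_k\to c_\infty$ enter: one then invokes the Scannell--Wolf theorem that $\gr(\cdot,\mu):\cT\to\cT$ is a diffeomorphism, so that $(c,\mu)\mapsto\sigma$ is a homeomorphism, forcing $\sigma_k$ into a compact set. Your sketch has no substitute for either the Roof Lemma input or the Scannell--Wolf step, and the argument that the disk data ``pins down the holonomy up to a compact amount'' is exactly what fails without them.

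A secondary issue: your proposed radius bounds via ``area of an ideal polygon inscribed in a circle of radius $r$'' do not work, because round disks in a general projective structure have no intrinsic area, and their area in any background conformal metric is not uniformly controlled (this is again the same problem: $\sigma_k$ is not yet known to be bounded). In the paper the lower radius bound is obtained only \emph{after} $\sigma_k$ is confined to a compact set, by showing (Proposition \ref{prop:contractible}) that contractible closed geodesics on the ideal polyhedral surface $\Sigma_\cD$ have length bounded below by a constant depending only on $(\eta,\theta)$ --- a nontrivial argument in de Sitter/half-space geometry using condition (2) of Lemma \ref{lem:angles}. Your tangency-chain comparability idea does not supply this.
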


This result was previously shown for surfaces of genus at least $2$ when $\tau$ has only one vertex, see \cite[Theorem 1.1]{KMT2}. For us, Theorem \ref{tm:packing} is a special case of Theorem \ref{tm:delaunay} below, as explained there. A closely related result was recently proved by E. Danenberg for combinatorially interesting $\tau$, see \cite{dannenberg_thesis}.

\subsection{Delaunay cell decompositions and circle patterns}
\label{ssc:delaunay}

In this section, we extend the notion of circle packing to a special type of circle pattern called Delaunay. Basically, a Delaunay circle pattern is the pattern of circles obtained from a Delaunay decomposition of a set of points in $(S,\sigma)$. Recall that a Delaunay decomposition of a set of points $V$ on $(S, \sigma)$ is a polygonal cell decomposition $\eta$ with $\cV(\eta) = V$ such the vertices of each polygon lie on the boundary of a round disk containing no other elements of $\cV(\eta)$ in its interior. 

Thurston noted that given a circle packing, one can obtain a nice Delaunay circle pattern with all intersection angles equal to $\pi/2$, as described in the next example.

\begin{example}\label{ex:circle_delaunay}
Let $\cP=\{ D_1,\ldots, D_n\}$ be a circle packing on $(S,\sigma)$ with nerve $\tau$. By definition, each connected component $P$ of $S\setminus \cup_{i=1}^n D_i$ is M\"obius equivalent to an ideal hyperbolic polygon and therefore defines a dual disk $D_P$ containing $P$, see Figure \ref{fig:circle_pack}. Since the boundary arcs of $P$ are perpendicular to $\partial D_P$, each dual disk is perpendicular to the circle packing disks which it intersects. Let $\cD_\cP$ be the union of $\cP$ and the set of dual disks of $\cP$. Then $\cD_\cP$ is a Delaunay circle pattern on $(S,\sigma)$ with all intersection angles are equal to $\pi/2$.
\end{example}

To make this definition precise in general, we will work in the universal cover $\wt S$ of $S$. For a finite collection $\cD$  of round disks in $\sigma$, let $\wt{D}$ be the set of all lifts to $\wt{S}$ and define

$$ \wt{V}_\cD = \{ x \in S \mid x \in  \partial D\cap \partial D' \text{ for distinct } D,D' \in \wt{\cD} \text{ and } x\not\in \mathrm{int}\left(D''\right) \; \forall D''\in \wt{\cD} \}.$$

By definition, $\wt{V}_\cD$ is $\pi_1 S$-invariant and we have a nice projection $V_\cD = \wt{V}_\cD/\pi_1 S \subset S$. 

\begin{definition} A {\it Delaunay circle pattern} on $(S,\sigma)$ is a finite collection $\cD$ of round disks such that 
\begin{enumerate}
\item $V_\cD$ is finite.
\item $S =  \bigcup_{D\in \cD}D$.
\item For every $D \in \wt{\cD}$, let $P_{\partial D} = \wt{V}_\cD \cap \partial D = \{x_1, \ldots x_k\}$ be cyclically ordered. We want $3 \leq k < \infty$ and for every $1 \leq i \leq k$ there exists $D_i \in  \wt{\cD}$ such that $\{x_i, x_{i+1}\} =  \partial D \cap \partial D_i$, where $x_{k+1} = x_1$.  We call $\{D,D_i\}$ a {\em cutting disk pair}.
\end{enumerate}
See Figure \ref{fig:delaunay_pattern} for an example.
\end{definition}

\begin{figure}[htb]
\begin{center}
\begin{overpic}[scale=.6]{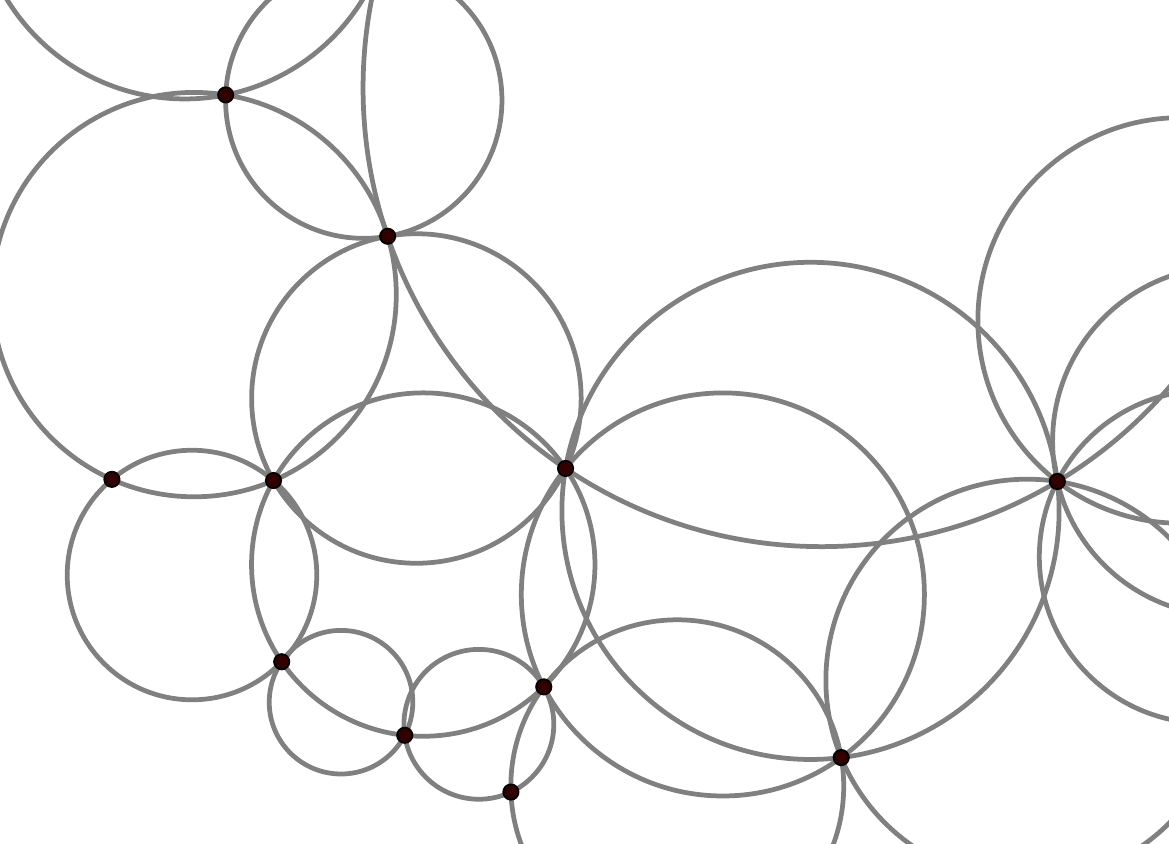}
\end{overpic}
\end{center}
\caption{A piece of a Delaunay circle pattern $\cD$ along with the associated vertex set $\wt{V}_\cD$.}
\label{fig:delaunay_pattern}
\end{figure}

Given a Delaunay circle pattern $\cD$, we build a polygonal cell decomposition $\wt{\eta}_\cD$ of $\wt{S}$ as follows. The $0$-cells of $\wt{\eta}_\cD$ are just the elements of $\wt{V}_\cD$. There is a 1-cell between $x,x'\in \wt{V}_\cD$ if and only if $x$ and $x'$ occur in cyclic order on some $\partial D$ for $D \in \wt{\cD}$. The $2$-cells are the polygons inside each $D \in \wt{\cD}$ cut out by the edges. The {\it associated polygonal cell decomposition} $\eta_\cD$ for $\cD$ is obtained as $\wt{\eta}_\cD/\pi_1S$. It is important to note that the 1-skeleton of $\eta_\cD$ is filling on $S$ by construction.
  
\begin{figure}[htb]
\begin{center}
\begin{overpic}[scale=.6]{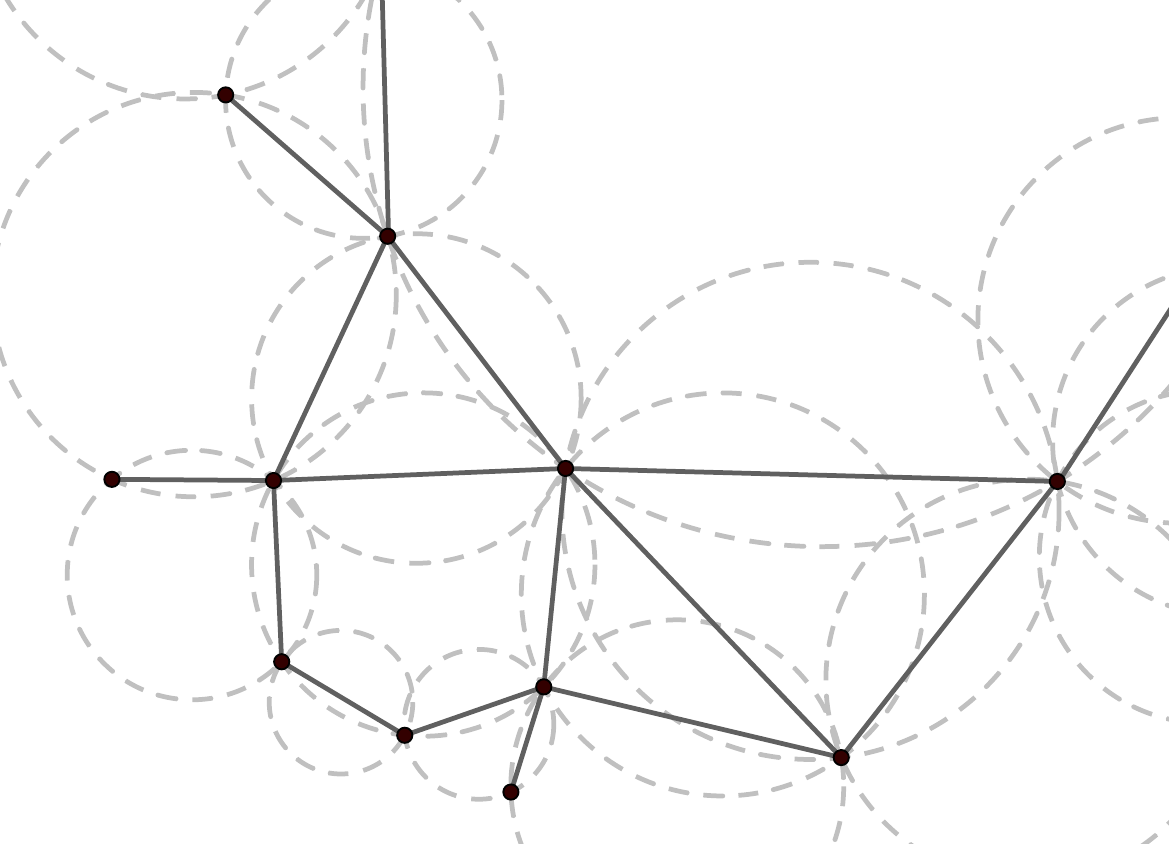}
\end{overpic}
\end{center}
\caption{A piece of a Delaunay circle pattern $\cD$ along with the associated decomposition $\wt{\eta}_\cD$.}
\label{fig:delaunay_tangency}
\end{figure}

These circle patterns are called Delaunay because of the following condition : any three consecutive vertices of a polygonal face lie on the boundary of an (immersed) round disk in $(S,\sig)$ and the interior of this disk contains no vertex of $\eta_\cD$. Additionally, each vertex and face of $\eta_\cD$ meets at least $3$ edges.

\begin{definition} The {\it nerve} of a Delaunay circle pattern is $\eta^*_\cD$, the dual polygonal cell decomposition of $\eta_\cD$.
\end{definition}

Note that every vertex of $\eta^*_\cD$ correspond to a disk in $\cD$. Additionally, if $\cP$ is a circle packing and $\cD_\cP$ is the associated Delaunay circle pattern as in Example \ref{ex:circle_delaunay}, then the $1$-skeleton of $\eta^*_{\cD_\cP}$ is bipartite with each edge connecting a disk of $\cP$ to some overlapping dual disk. See Remark \ref{rk:relation} for more details.

For Delaunay circle patters, we want to keep track of the angle information between overlapping circles. We define the {\em angle function} $\theta_\cD: \cE(\eta^*_\cD) \to (0,\pi)$, where $\cE(\eta^*_\cD)$ is the edge set of $\eta^*_\cD$, as follows. Given $e \in \cE(\eta^*_\cD)$, the endpoints of a lift $\wt{e}$ to $\wt{S}$ correspond to two round disks $D_e^0, D_e^1 \in \wt{\cD}$. We let $\theta_\cD(e)$ be the angle between the outward normals of $D_e^0$ and $D_e^1$ at a point of $\partial D_e^0 \cap \partial D_e^1$. Notice that this is well defined up to M\"obius transformations, as they are conformal. In general, the angle function satisfies some nice conditions.

\begin{lemma}\label{lem:angles} The angle function $\theta_\cD: \cE(\eta^*_\cD) \to (0, \pi)$ satisfies the following properties.
  \begin{enumerate}
    \item For each face $f$ of $\eta^*_\cD$, $\sum_{e \in \partial f} \theta_\cD(e)=2\pi$.
    \item For each homotopically trivial non-backtracking closed edge path $[e_1,\cdots, e_n]$ in $\eta^*_\cD$ which does not bound a face, $\sum_{i=1}^n \theta_\cD(e_i)>2\pi$.
  \end{enumerate}
\end{lemma}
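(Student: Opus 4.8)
Both assertions are the familiar local and global ``coherence'' conditions for circle patterns, and the plan is to treat them in turn: (1) is purely local, at a single vertex of $\eta_\cD$, while (2) will be reduced to (1) by a filling argument and then completed with one input from $3$-dimensional hyperbolic geometry. To set up (1), fix a face $f$ of $\eta^*_\cD$; it is dual to a vertex $v$ of $\eta_\cD$. Since $v$ lies in the interior of no disk (it belongs to $\wt V_\cD$), the disks of $\wt\cD$ whose boundary circle passes through $v$ are exactly those dual to the vertices of $f$; list them cyclically as $D^{(1)},\dots,D^{(m)}$, and the edges of $\partial f$ as $e_1,\dots,e_m$, with $D^{(j)}$ the vertex of $f$ between $e_{j-1}$ and $e_j$. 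Then $D^{(j)}$ and $D^{(j+1)}$ share the edge of $\eta_\cD$ dual to $e_j$, so they form a cutting pair meeting at $v$ and $\theta_\cD(e_j)$ is the angle between them at $v$; thus (1) is the identity $\sum_{j=1}^m\theta_\cD(e_j)=2\pi$.

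For this I would apply a M\"obius transformation sending $v$ to $\infty$, so that each $D^{(j)}$ becomes a Euclidean half-plane $H^{(j)}\subset\bC$ with outward unit normal $\nu^{(j)}$, and the $2$-cell $F^{(j)}$ of $\eta_\cD$ lying inside $D^{(j)}$ becomes a region of $H^{(j)}$ having $\infty$ as one of its vertices. Since $S=\bigcup_{D\in\cD}D$, the half-planes $H^{(j)}$ cover a punctured neighbourhood of $\infty$; since $\eta_\cD$ is a cell decomposition of $S$, the cells $F^{(1)},\dots,F^{(m)}$ have pairwise disjoint interiors and fill that punctured neighbourhood exactly once around, in cyclic order. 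Recording directions at $\infty$: the half-plane $H^{(j)}$ occupies the closed half-circle of $S^1$ centred at $-\nu^{(j)}$, while $F^{(j)}$ occupies a sub-arc of positive length inside it, and these sub-arcs partition $S^1$ cyclically. A short argument then shows that the directions $-\nu^{(1)},\dots,-\nu^{(m)}$ rotate monotonically and wind exactly once around $S^1$; since the turn from $-\nu^{(j)}$ to $-\nu^{(j+1)}$ is exactly the angle $\theta_\cD(e_j)\in(0,\pi)$ between $D^{(j)}$ and $D^{(j+1)}$, summing over the cycle gives $\sum_j\theta_\cD(e_j)=2\pi$.

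For (2), let $\gamma=[e_1,\dots,e_n]$ be as in the statement. Being homotopically trivial, $\gamma$ bounds a $2$-chain in $\eta^*_\cD$; take a minimal one, realised by a topological disk $K$ mapped to $\eta^*_\cD$ with boundary $\gamma$, and note that $K$ has $k\ge 2$ faces because $\gamma$ does not bound a face. Summing (1) over the $k$ faces of $K$ — each interior edge of $K$ counted twice, each boundary edge once — gives
\[
\sum_{e\in\gamma}\theta_\cD(e)\;+\;2\sum_{e\in\mathrm{int}(K)}\theta_\cD(e)\;=\;2\pi k ,
\]
and the Euler characteristic of the disk $K$ shows $K$ has $(k-1)+i$ interior edges, where $i\ge 0$ is its number of interior vertices. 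If $i=0$ there are exactly $k-1$ interior edges, each carrying an angle $<\pi$, so $\sum_{e\in\mathrm{int}(K)}\theta_\cD(e)<\pi(k-1)$ and therefore $\sum_{e\in\gamma}\theta_\cD(e)>2\pi$, as required.

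The case $i>0$ — for example when $\gamma$ encircles a vertex of $\eta^*_\cD$ together with its whole link — is where the substantive work lies, and the count above is no longer sufficient. The plan is to exploit that the disks of $\cD$ are genuinely round: lifting $\gamma$ to $\wt S$ and developing (equivalently, working inside the hyperbolic end $E(\sigma)$), the disks visited by $\gamma$ become round disks $\hat D_0,\dots,\hat D_n=\hat D_0$ in $\bCP^1$, i.e.\ totally geodesic planes $\Pi_0,\dots,\Pi_n=\Pi_0$ in $\bH^3$, with consecutive ones meeting at angle $\theta_\cD(e_i)$, and the hypothesis that $\gamma$ does not bound a face says that these planes do not all pass through a common point at infinity. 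One then needs the geometric inequality that a closed cyclic chain of totally geodesic planes in $\bH^3$ whose consecutive members meet at angles $\theta_i\in(0,\pi)$, and which does not degenerate to a pencil through a single ideal point, satisfies $\sum_i\theta_i>2\pi$ — a Gauss--Bonnet / convexity phenomenon reflecting the negative curvature of $\bH^3$, the value $2\pi$ being attained precisely in the degenerate ``one face'' case. Proving this inequality is, I expect, the main obstacle; the combinatorial reduction above then closes the argument for (2).
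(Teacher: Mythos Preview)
Your argument for (1) is correct and essentially equivalent to the paper's: sending $v$ to $\infty$ and reading off directions on $S^1$ is the same computation as taking the link of $P_\cD$ at the ideal vertex $v$ (intersecting with a small horosphere and obtaining a Euclidean polygon whose exterior angles are the dihedral angles), just phrased in $\bCP^1$ rather than in the hyperbolic end.

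For (2), the Euler-characteristic filling is a pleasant warm-up, but as you yourself note it only settles the case of a disk with no interior vertices and becomes superfluous once the general argument is in hand. The genuine gap is in your final paragraph. The inequality you propose --- that a closed cyclic chain of totally geodesic planes in $\bH^3$, consecutive ones meeting at angles $\theta_i\in(0,\pi)$ and not all through one ideal point, must satisfy $\sum_i\theta_i>2\pi$ --- is not well-posed as stated: a ``chain of planes'' does not by itself determine a curve or a convex body to which Gauss--Bonnet applies, and the non-degeneracy condition you single out is not obviously sufficient. You also flag explicitly that you do not see how to prove it, so the proposal is incomplete here.

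The paper fills this gap by reducing to a one-dimensional object. The lifted path $\wt\gamma$ visits a cycle of faces $f_1,\dots,f_k$ of the ideal polyhedron $P_\cD$; gluing these faces along their shared edges yields an abstract hyperbolic cylinder $T_\gamma$, and since $\gamma$ is non-backtracking (so $k\ge 3$) it has a unique geodesic representative $g$ on $T_\gamma$. Developing $g$ into $\bH^3$ produces a closed piecewise-geodesic loop whose exterior angle $\alpha_i$ at the $i$-th bend satisfies $\alpha_i\le\theta_\cD(e_i)$, since a geodesic crossing a pleating edge bends by at most the dihedral angle there. The conditions $k\ge 3$ and $\theta_\cD(e_i)<\pi$ ensure the vertices of this loop are not all collinear, and one then invokes a known result for polygonal loops in $\bH^3$ (Hodgson--Rivin): any such loop with non-collinear vertices has total exterior angle strictly exceeding $2\pi$. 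This gives $2\pi<\sum_i\alpha_i\le\sum_i\theta_\cD(e_i)$. The missing idea, then, is to replace the chain of planes by the core geodesic of the glued-up cylinder and apply the polygonal-curve inequality to that.
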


The proof can be found in Section \ref{ssc:delaunay}.

We say that a pair $(\eta, \theta)$ of a polygonal cell decomposition $\eta$ of $S$ and a function $\theta : \cE(\eta^*) \to (0, \pi)$ is { \it admissible} if $\theta$ satisfies the conclusions of Lemma \ref{lem:angles}.

Bobenko and Springborn have show that an admissible pair can always be realized on a unique Fuchsian structure. Recall that a complex projective structure is {\em Fuchsian} if the image of the developing map is a disk in $\bCP^1$, or, equivalently, if it is the underlying complex projective structure of a hyperbolic metric.

\begin{theorem}[{{\cite[Theorem 4]{bobenko-springborn}}}]\label{tm:bs}
  \label{thm:satisfy_angles}
If $(\eta, \theta)$ is admissible, then there exists a unique marked {\it hyperbolic} structure $(S, \sig_h)$ and a Delaunay circle pattern $\cD$ on $(S,\sig_h)$ such that $\eta = \eta_\cD$ and $\theta = \theta_\cD$.
\end{theorem}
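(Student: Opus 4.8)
This is a theorem of Bobenko--Springborn, and the natural route --- the one they take --- is a variational argument. Fix the polygonal cell decomposition $\eta$ and the admissible angle function $\theta\colon\cE(\eta^*)\to(0,\pi)$. A hyperbolic circle pattern realizing $(\eta,\theta)$ is determined, up to isotopy, by the radii $r_v>0$ of its circles, one for each vertex $v$ of $\eta^*$ (equivalently, each face of $\eta$); I would work with the logarithmic coordinates $\rho_v=\log\tanh(r_v/2)$, which range over an appropriate open convex set $U\subseteq\bR^{\cV(\eta^*)}$. For any $\rho\in U$ one builds a hyperbolic cone-metric on $S$ by gluing the elementary pieces prescribed by the data: for each edge $e$ of $\eta$, dual to an edge $e^*$ of $\eta^*$ carrying the angle $\theta(e^*)$ and separating two circles $C_v,C_w$, one forms the hyperbolic ``kite'' with vertices the two circle centres and the two intersection points of $C_v$ and $C_w$, and one glues these kites along shared edges. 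The resulting metric is a genuine (smooth) hyperbolic structure precisely when it has no cone points, i.e.\ precisely when a system of angle-sum equations holds at the vertices of $\eta$.

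The key point is that these angle-sum equations are exactly the critical-point equations of a single function $\Phi\colon U\to\bR$, which one assembles from the Milnor--Lobachevsky function evaluated on the geometry of each kite (equivalently, a sum of dilogarithm-type terms) together with a linear term in $\rho$ whose coefficients are built from $\theta$. I would then establish two properties of $\Phi$. The first is strict concavity: this reduces to a local computation showing that the Hessian of each elementary term is negative (semi)definite, which is the circle-pattern analogue of the concavity of the volume of a hyperbolic polyhedron as a function of its dihedral angles, and the relevant building-block convexity statements are available in the literature. The second is properness: $\Phi(\rho)\to-\infty$ as $\rho\to\partial U$ or $\|\rho\|\to\infty$. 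A strictly concave function on a convex domain that is proper in this sense has a unique critical point, which is the configuration we seek.

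Properness is where admissibility is used, and I expect it to be the main obstacle. Along a ray $\rho=\rho_0+t\xi$ (with $t\to+\infty$, $\xi\neq 0$) that remains in $U$, each term of $\Phi$ is asymptotically affine in $t$, and after summing the kite contributions the leading coefficient can be rewritten as a linear combination --- determined by $\xi$ --- of partial sums of $\theta$ along edge paths in $\eta^*$. Here the two conditions of Lemma \ref{lem:angles} enter in a complementary way: the face identities $\sum_{e\in\partial f}\theta(e)=2\pi$ make the contributions of the ``closing-up'' directions vanish against the linear term, while the strict inequalities $\sum_i\theta(e_i)>2\pi$ over null-homotopic non-backtracking closed edge paths that do not bound a face force the leading coefficient to be strictly negative in every remaining direction; a parallel analysis controls approaches to $\partial U$ (circles collapsing, or edge lengths blowing up). The technical heart is to enumerate exactly which degenerations occur and to check that Lemma \ref{lem:angles} supplies precisely the inequalities needed --- no more and no less. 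Granting concavity and properness, the unique critical point $\rho^\ast\in U$ produces a hyperbolic cone-metric with no cone points, i.e.\ a marked hyperbolic structure $(S,\sig_h)$ (whose underlying complex projective structure is Fuchsian), together with a Delaunay circle pattern $\cD$ satisfying $\eta_\cD=\eta$ and $\theta_\cD=\theta$; the combinatorial hypotheses on $\eta$ --- in particular that $\eta_1$ lifts to a simple graph in $\wt S$ --- ensure that this configuration is a bona fide circle pattern in the sense of the preceding definitions. Uniqueness of $(S,\sig_h,\cD)$ up to isotopy follows from uniqueness of the critical point, since the radii determine both the pattern and the metric. One could alternatively deduce parts of this statement from Rivin-type realization results for hyperbolic polyhedra with prescribed dihedral angles, but the variational route is self-contained and is the one carried out by Bobenko--Springborn.
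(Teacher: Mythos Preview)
The paper does not prove this theorem; it is stated with attribution to Bobenko--Springborn \cite[Theorem 4]{bobenko-springborn} and used as a black box (to show that $\Pi_1(\cC_{\eta,\theta})$ meets the Fuchsian locus in a unique point). There is therefore no ``paper's own proof'' to compare against.

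Your outline accurately reflects the Bobenko--Springborn variational argument: parametrize by logarithmic radii, assemble a functional from dilogarithm/Lobachevsky terms whose critical points are exactly the cone-free configurations, establish strict convexity (or concavity, depending on sign conventions) by a local Hessian computation, and derive properness from the admissibility conditions of Lemma~\ref{lem:angles}. Two small caveats. First, in Bobenko--Springborn the precise choice of variable and the sign of the functional differ slightly from your sketch (they work with $\rho_v$ related to the hyperbolic radius through a specific transformation and prove convexity rather than concavity), so if you flesh this out you should match their conventions. Second, the properness step is genuinely delicate: the reduction of the asymptotic slope to sums over closed edge paths requires a careful combinatorial argument (essentially a feasibility/coherent-angle-system analysis), and in the original paper this is where most of the work lies; your sketch correctly identifies this as ``the main obstacle'' but underplays the amount of bookkeeping needed to make the inequalities from Lemma~\ref{lem:angles} cover every degeneration direction. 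With those caveats, your route is the right one and would reproduce the cited result.
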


\subsection{A properness result for Delaunay circle patterns}

Let $(\eta, \theta)$ be an admissible pair. We denote by $\cC_{\eta,\theta}$ the space of pairs $(\sigma, \cD)$ where $\sigma\in \cC$ is a complex projective structure, and $\cD$ is a Delaunay circle pattern on $(S,\sigma)$ with $\eta_\cD = \eta$ and $\theta_\cD = \theta$. The topology on the second factor is inherited from the bundle of round disks on $(S, \sig)$. Consider the function $f_{\eta,\theta}=f\circ \Pi_1:\cC_{\eta,\theta} \to \cT$.

We are motivated by the following question, extending Conjecture \ref{cj:kmt}.

\begin{question} \label{q:eta}
 Let $(\eta, \theta )$ be an admissible pair. Is $f_{\eta,\theta}$ a homeomorphism? 
\end{question}

Note that Theorem \ref{tm:bs} above shows that $\Pi_1(\cC_{\eta,\theta})$ intersects the Fuchsian locus of $\cC$ in a unique point (see also \cite[Theorem 4.23]{ideal}). In Section \ref{ssc:delaunay}, we show that only admissible angle functions can be associated to Delaunay circle patterns.

We can now state our main result.

\begin{theorem} \label{tm:delaunay}
Let $(\eta, \theta )$ be an admissible pair. Then $f_{\eta,\theta}:\cC_{\eta,\theta}\to \cT$ is proper.
\end{theorem}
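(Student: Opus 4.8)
The plan is to use the dictionary developed in Section~\ref{ssc:delaunay} and in Proposition~\ref{prop:balanced2}, which identifies a pair $(\sigma,\cD)\in\cC_{\eta,\theta}$ with a ``balanced'' complete hyperbolic metric $h_\cD$ of finite area on the punctured surface $S_{g,n}$, where $S$ has genus $g$, the punctures sit at the combinatorially prescribed vertex set $\cV(\eta)$, and $n=|\cV(\eta)|$. Geometrically $h_\cD$ is the metric induced on the inscribed ideal polyhedron $\Pi_\cD$ in the hyperbolic end $E(\sigma)$: the faces of $\Pi_\cD$ are the totally geodesic planes in $\bH^3$ bounded by the lifts of the disks of $\cD$, its ideal vertices are the lifts of $V_\cD$, and its dihedral angles are prescribed functions of $\theta$. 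The region of $E(\sigma)$ between $\Pi_\cD$ and the conformal boundary $(S,\sigma)$ is a union of ``bigons'' of angle a function of $\theta$ along the edges, so that the underlying complex structure $c=f_{\eta,\theta}(\sigma,\cD)$ is obtained from $[h_\cD]$ by a projective grafting whose support and weights are determined by the fixed data $(\eta,\theta)$, followed by filling in the cusps. Properness is the statement that a sequence $(\sigma_k,\cD_k)\in\cC_{\eta,\theta}$ with $c_k:=f_{\eta,\theta}(\sigma_k,\cD_k)\to c_\infty$ in $\cT$ has a subsequence converging in $\cC_{\eta,\theta}$, so the first step is to translate the sequence to the balanced metrics $h_k:=h_{\cD_k}$ on $S_{g,n}$.

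The crux is to show that, after passing to a subsequence, $[h_k]$ stays in a compact part of the Teichm\"uller space $\cT_{g,n}$ of $S_{g,n}$, i.e. that no closed geodesic of $h_k$ has length going to $0$. A simple closed curve on $S_{g,n}$ either is essential on the closed surface $S$, or bounds an embedded disk in $S$ that contains at least two punctures. For a curve $\gamma$ of the first kind I would use the comparison above: filling in cusps does not affect the length of a curve that avoids small cusp neighborhoods, and the projective grafting relating $[h_k]$ to $c_k$ is by weights of bounded size, so standard grafting estimates give that $\ell_{h_k}(\gamma)\to 0$ would force $\ell_{c_k}(\gamma)\to 0$, contradicting $c_k\to c_\infty$. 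A curve $\gamma$ of the second kind corresponds to a cluster of ideal vertices of $\Pi_k$ collapsing together, and here the convergence of $c_k$ gives no information, since $\gamma$ is trivial on $S$; instead one has to use the three-dimensional rigidity. The collapse would force the totally geodesic faces incident to the cluster, whose mutual dihedral angles are pinned by $\theta$, to degenerate in a way incompatible with the angle-sum conditions of Lemma~\ref{lem:angles}, equivalently with $h_k$ remaining a smooth hyperbolic metric on $S_{g,n}$; quantitatively, the rigidity of ideal polyhedra with prescribed dihedral angles, applied to the sub-configuration cut out by $\gamma$, bounds the modulus of the separating annulus and rules out the collapse.

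Granting compactness of $[h_k]$ in $\cT_{g,n}$, pass to a subsequence so that $h_k$ converges, smoothly on compact subsets of $S_{g,n}$, to a complete finite-area hyperbolic metric $h_\infty$; the combinatorially fixed geodesic polygonation $\eta$ of $h_k$ then converges to a geodesic polygonation of $h_\infty$ of the same combinatorial type. The balancing conditions --- the angle identities and strict inequalities of Lemma~\ref{lem:angles} together with the closing-up condition of Proposition~\ref{prop:balanced2} --- are closed, so $h_\infty$ is balanced with combinatorics $\eta$ and angles $\theta$, and reversing the dictionary yields a pair $(\sigma_\infty,\cD_\infty)\in\cC_{\eta,\theta}$. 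To finish, since $\theta$ is fixed and $h_k\to h_\infty$ together with its polygonation, the developed ideal polyhedra converge, hence so do the holonomy representations $\hol_{\sigma_k}$ and the developing maps $\dev_{\sigma_k}$; therefore $\sigma_k\to\sigma_\infty$ in $\cC$ and $\cD_k\to\cD_\infty$ in the bundle of round disks, and by continuity of $f_{\eta,\theta}$ one gets $f_{\eta,\theta}(\sigma_\infty,\cD_\infty)=\lim_k c_k=c_\infty$. This produces the required convergent subsequence. Theorem~\ref{tm:packing} follows as the special case $\theta\equiv\pi/2$, with Proposition~\ref{prop:balanced1} in place of Proposition~\ref{prop:balanced2}.

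The main obstacle is the compactness of $[h_k]$ in $\cT_{g,n}$, and within it the collapsing-cluster case: convergence of the underlying complex structures $c_k$ controls the large-scale conformal geometry of $S_{g,n}$ but says nothing about how the circle-pattern vertices cluster, so the argument must extract from the fixed combinatorics $\eta$ and the fixed intersection angles $\theta$ --- through the geometry and rigidity of ideal polyhedra with prescribed dihedral angles --- an obstruction to degenerations that are invisible to the conformal structure at infinity. A secondary, more routine point is making the length comparison between $h_k$ and $c_k$ precise, which should follow from standard estimates on hyperbolic ends and on grafting.
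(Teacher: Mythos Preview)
Your overall architecture matches the paper's: translate to the induced metric $h_\cD$ on the punctured surface, show no closed geodesic on $\Sigma_\cD$ gets short, split into contractible-in-$S$ versus non-contractible-in-$S$ curves, and pass to a limit. The place where your outline diverges from a workable proof is the non-contractible case.

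You propose to relate $h_\cD$ to $c=f(\sigma)$ by ``a projective grafting whose support and weights are determined by the fixed data $(\eta,\theta)$, followed by filling in the cusps,'' and then invoke ``standard grafting estimates.'' That grafting is along the ideal measured lamination formed by the edges of the polyhedron, whose leaves all run out the cusps. As Remark~\ref{rk:grafting} points out, the grafting homeomorphism and the Scannell--Wolf theorem are only known for closed surfaces or for laminations that do not enter the cusps; there are no ``standard estimates'' to invoke here. The paper avoids this by working not with $h_\cD$ but with $h_\sigma$, the induced metric on the concave pleated boundary $\partial_0 E(\sigma)$ of the end, which is a \emph{closed} hyperbolic surface. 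The bending lamination is then $\mu_\sigma\in\cML_g$, which is \emph{not} determined by $(\eta,\theta)$ and varies with $\sigma$. The missing ingredient is Lemma~\ref{lem:lambda}: a uniform bound on $\mu_\sigma$ over all of $\cC_{\eta,\theta}$, obtained via the Bridgeman--Canary Roof Lemma after a careful placement of the vertices of $\eta^*$ so that adjacent support half-spaces overlap. With $\mu_\sigma$ bounded and $c_k\to c_\infty$, Scannell--Wolf on the closed surface forces $h_{\sigma_k}$ into a compact set (this is where the paper uses grafting, Proposition~\ref{prop:non-contractible} and Lemma~\ref{lem:K_C}), and the $1$-Lipschitz orthogonal projection $\Sigma_\cD\to\partial_0 E(\sigma)$ then bounds non-contractible geodesics on $\Sigma_\cD$ from below.

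For the contractible case you correctly locate the mechanism (the strict angle-sum excess in Lemma~\ref{lem:angles}(2) should obstruct collapse), but ``rigidity of ideal polyhedra with prescribed dihedral angles, applied to the sub-configuration cut out by $\gamma$'' does not name an available theorem. The paper's Proposition~\ref{prop:contractible} carries this out explicitly: first show that a short contractible geodesic on $\Sigma_\cD$ develops to an embedded pleated cylinder in $\bH^3$; then pass to the de~Sitter picture and show that as the core length tends to zero the edges become coplanar; finally apply Gauss--Bonnet to the resulting planar polygon to contradict $\sum\theta_i>2\pi$.
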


We will show in Section \ref{ssc:from} that a positive answer to Question \ref{q:eta} would prove Conjecture \ref{cj:kmt}, and that Theorem \ref{tm:delaunay} implies Theorem \ref{tm:packing}.

\section{From Delaunay circle patterns to circle packings}
\label{ssc:from}
Let us focus on the relationship between circle packings and Delaunay circle patterns.
\begin{remark} \label{rk:relation}
  A Delaunay circle packing $\cD$ is obtained from a circle packing $\cP$ by adding the dual family of disks if and only if:
 \begin{enumerate}
 \item  the intersection angles of $\cD$ are all equal to $\pi/2$, and
 \item  the 1-skeleton of $\eta^*_\cD$ is bipartite.
 \end{enumerate}
\end{remark}

\begin{proof}
Let $\cP = \{D_1, \ldots, D_n\}$ be a circle packing on a complex projective structure $\sig$ on $S$. As in Example \ref{ex:circle_delaunay}, we obtain $\cD = \cD_\cP$ by adding the dual disks to $\cP$. Because each complement of $S\setminus \cup_{i=1}^n D_i$ is M\"obius equivalent to an ideal hyperbolic polygon, the angles between overlapping disks in $\cD$ are all $\pi/2$. Further, an overlap only occurs between a disk of $\cP$ and some dual disk. As vertices of $\eta^*_\cD$ correspond to disks, we can naturally partition them into the ``originals'' from $\cP$ and the duals, making the $1$-skeleton of $\eta^*_\cD$ bipartite.

Conversely, let $\eta^*$ be a polygonal cell decomposition of $S$ with a bipartite decomposition $\cV(\eta^*) = \cV^0 \cup \cV^1$ of the vertices. Let $\cD$ be a Delaunay circle pattern with $\eta^* = \eta^*_\cD$ and $\theta_\cD(e) = \pi/2$ for all edges. It follows from point (1) of Lemma \ref{lem:angles} that all faces of $\eta^*$ are 4-gons.
Thus, for each face, two opposite vertices correspond to elements of $\cV^0$ and the other two to elements of $\cV^1$. The corresponding four disks of $\wt{\cD}$ meet at a point, with each pair of opposite disks being tangent by the angle conditions. When applied to all faces of $\eta^*$, this implies that no disk of $D$ overlaps itself, so all disks are embedded. Let $\cP = \{ D \in \cD \mid D \text{ corresponds to a vertex in } \cV^0\}$. Then $\cP$ is a circle packing with tangency relation corresponding to the faces of $\eta^*$ and $\cD = \cD_\cP$.
\end{proof}

\begin{remark} \label{rk:nerves} Notice that the nerve of a circle packing is different from the nerve of a Delaunay circle pattern. However, one can get from one to the other combinatorially as follows. Let $\tau = \tau_\cP$ be the nerve of some circle packing $\cP$. Build a new polygonal cell decomposition $\eta$ by taking the vertices $\cV(\eta)$ to be the set of \emph{midpoints} of edges of $\tau$, then add an edge to $\eta$ for every pair of midpoints appearing cyclically around a vertex of $\tau$, and fill in the complimentary regions with faces. Notice that the faces of $\eta$ will correspond to both the vertices of $\tau$ and the faces of $\tau$. This ``midpoint'' cell decomposition $\eta$ will, in fact, be $\eta_{\cD_\cP}$. Taking the dual, we get $\eta^* = \eta^*_{\cD_\cP}$.
\end{remark}

We can now prove Theorem \ref{tm:packing} from Theorem \ref{tm:delaunay}, expanding on Example \ref{ex:circle_delaunay}.

\begin{proof}[Proof of Theorem \ref{tm:packing} assuming Theorem \ref{tm:delaunay}.]
 Let $\tau$ be a polygonal cell decomposition of $S$ and $\eta$ the ``midpoint'' cell decomposition constructed in Remark \ref{rk:nerves}. By Remark \ref{rk:relation}, $\cC_\tau$ can be identified with $\cC_{\eta,\pi/2}$, where, by abuse of notation, $\pi/2$ denotes the constant function on $\cE(\eta^*)$ with value $\pi/2$. In this identification, $f_\tau:\cC_\tau\to \cT$ is the function $f_{\eta,\pi/2}:\cC_{\eta,\pi/2}\to \cT$. Theorem \ref{tm:delaunay} shows that $f_{\eta,\pi/2} = f_\tau$ is proper.
\end{proof}

\section{Ideal polyhedra in hyperbolic ends}

\subsection{Hyperbolic ends}
\label{ssc:ends}

A $\bCP^1$-structure $\sigma$ on $S$ gives rise to a (non-complete) hyperbolic structure $E(\sig)$ on $S \times \bR_{>0}$, called the {\it hyperbolic end associated to} $\sig$.

The construction of this hyperbolic end is simpler in the special case where the developing map $\dev_\sigma:\tilde{S}\to \bCP^1$ is injective. In this case, the holonomy representation $\hol_\sigma$ of $\sigma$ acts properly discontinuously on the complement of the convex hull in $\bH^3$ of $\bCP^1\setminus \dev_\sigma(\tilde S)$, and the hyperbolic end $E(\sigma)$ can be defined as:
$$ E(\sigma) = (\bH^3\setminus \hull(\bCP^1\setminus \dev_\sigma(\tilde S)))/\hol_\sigma(\pi_1S)~. $$
The key geometric features of $E(\sigma)$ are visible in this special case. It is a hyperbolic manifold homeomorphic to $S\times \bR_{>0}$ with one complete end and one end whose metric completion is a pleated, concave surface homeomorphic to $S$.

In the general case, $E(\sig)$ can be constructed in the following manner. Start by associating a half-space $B_D$ in $\bH^3$ to each round disk $D$ in $\wt{\sigma}$. Recall that for a round disk $D$, $\dev_\sig(D)$ is a closed disk in $\bCP^1$, which we can identify with $\partial_\infty \bH^3$. The hyperplane in $\bH^3$ with boundary $\partial \dev_\sig(D)$ cuts $\bH^3$ into two half-spaces. The one facing $\dev_\sig(D)$ will be called $B_D$. We build the universal cover $\wt{E}(\sig)$ as a gluing of the half-spaces associated to all round disks in $\wt{\sig}$ by extending the natural overlap gluing between round disks. Finally, $E(\sig)$ is the quotient by the natural action of $\pi_1 S$. See \cite{kulkarni-pinkall} for a more detailed description of such constructions.

As in the special case, where $\dev_\sigma$ was injective, $E(\sigma)$ is a hyperbolic manifold homeomorphic to $S\times \bR_{>0}$. It is complete on one end, and has a metric completion whose boundary is a concave pleated surface homeomorphic to $S$. This concave surface is hyperbolic in the path metric and is pleated along a measured lamination. The complete end has an ideal (or conformal) boundary, which can be constructed via equivalence classes of rays with conformal structure arising from visual angles. To summarize and introduce notation,

\begin{definition}\label{def:ends}
Let $\sigma \in \cC$ be a complex projective structure on $S$. Define
\begin{enumerate}
\item $\partial_\infty E(\sigma)$ to be the the ideal boundary of $E(\sigma)$.
\item $\partial_0 E(\sigma)$ to be the concave, pleated surface on the metric boundary of $E$.
\item $h_\sigma \in \cT$ to be the induced hyperbolic metric on $\partial_0 E(\sigma)$
\item $\mu_\sigma \in \cML$ to be the measured bending lamination on $\partial_0 E(\sigma)$. 
\end{enumerate}  
\end{definition}

See \cite{thurston-notes,epstein-marden} for a detailed description of these objects.

\begin{remark}\label{rk:grafting}
  There is a close relationship between $\sig$, $h_\sigma$, and $\mu_\sigma$. Given $h_\sigma$, one can recover $\sig$ by {\it grafting} along $\mu_\sigma$. When $\mu_\sigma$ is a weighted multi-curve, the grafting operation is akin to cutting along each geodesic loop and attaching a flat cylinder of height given by the corresponding weight, for more details see \cite{dumas-survey, thurston-notes}. This operation extends to any hyperbolic  structure and any measured lamination giving a map  $\Gr :  \cT \times \cML \to \cC$, which turns out to be a homeomorphism \cite{kamishima-tan}. Composing with the projection $f : \cC \to \cT$, we get another grafting map $\gr : \cT \times \cML \to \cT$. Notice that $\Gr(h_\sigma, \mu_\sigma) = \sigma$. It is important to note that these statements are only known to be true when working on closed surfaces or, for complete surfaces of finite area, with measured laminations that do not enter the cusps. Extensions to hyperbolic surfaces with cone singularities of angle less than $\pi$ are also possible, see \cite{foliation}.
\end{remark}

\subsection{A review of measured laminations}

Let $(S_{g,n},h)$ be an oriented finite area hyperbolic surface of genus $g$ with $n \geq 0$ cusps. A \emph{geodesic lamination} $\cL$ on $(S_{g,n},h)$ is a closed subset which is the union of a disjoint collection of complete simple geodesics. A \emph{transverse measure}  $\mu$ on $\cL$ is an assignment of Radon measures $\mu_\al$ on each transverse arc $\al$ to $\cL$ subject to the following conditions: (1) if $\al' \subset \al$ is a sub-arc, then $\mu_{\al'} = \mu_\al\mid_{\al'}$ and (2) if $\al$ and $\al'$ are homotopic through a family of transverse arcs, then the homotopy sends $\mu_\al$ to $\mu_{\al'}$. The standard notation for the full measure of a transverse arc $\al$ is $i(\al, \mu)$.

Let $\cS$ denote the space of simple closed curves on $S_{g,n}$. A {\em weighted multi-curve} is a function $\omega : \cS \to \bR_{\geq 0}$ such that $\supp(\omega) = \{ \gamma \in \cS \mid \omega(\gamma) \neq 0\}$ is a finite collection of disjoint curves. Note that  $\supp(\omega)$ is a lamination. Viewing $\omega$ as assigning a weighted Dirac measure to transverse arcs gives an example of a transverse measure on $\supp(\omega)$.

Given a transverse measure $\mu$ on a geodesic lamination $\cL$, one can also define the geodesic lamination $\supp(\mu)$, which will always be a subset of $\cL$. A {\em measured lamination} is a pair $(\mu, \cL)$ such that $\supp(\mu) = \cL$. Because of this condition, we will usually only use $\mu$ when referring to a measured lamination. The space of measured laminations on $S_{g,n}$ will be denoted by $\cML = \cML_{g,n}$, with genus and punctures dropped when they are implied. The topology on $\cML$ is the weak topology on transverse measures. Note that changing the hyperbolic metric gives naturally homeomorphic spaces of measured laminations. See \cite{FLP} for a details.

Let $\cML^o$ denote the closure of weighted multi-curves in $\bR_{\geq 0}^\cS$. Thurston showed that by extending the correspondence between weighted multi-curves and measured laminations, we obtain $\cML^o$ as a subspace of $\cML$. Further, when there are no punctures, $\cML_{g,0} = \cML_{g,0}^o$.

For $S_{g, n}$ with $n \geq 1$, $\cML \smallsetminus \cML^o$ is non-empty and contains the measured laminations whose support includes bi-infinite simple geodesics with {\em both} ends going out a cusp. We call these {\em ideal leaves} and there can only be finitely many of them in any measured lamination, see \cite{thurston-notes}. An {\em ideal measured lamination} is one where {\em all} leaves are ideal. A simple example of an ideal measured lamination arises in the context of ideal hyperbolic polyhedra. If $P \subset \bH^3$ is an ideal polyhedron, then $\partial P$ is a pleated punctured sphere with the edges and dihedral angles of $P$ giving an ideal measured lamination on $\partial P$. That is, $\partial P$ is a hyperbolic punctured sphere bent along an ideal measured lamination.

\subsection{Ideal polyhedra}

In this section, we define a notion of ideal polyhedra in hyperbolic ends, and show that these ideal polyhedra are in one-to-one correspondence with Delaunay circle patterns. In the next section, we will show that ideal polyhedra in hyperbolic ends (with prescribed dihedral angles) are fully described by their induced metrics, and provide a description of these induced metrics.

An ideal polyhedron $P$ in $\bH^3$ is usually constructed as the convex hull of a finite set $V = \{v_1, \ldots, v_n\} \subset \partial_\infty \bH^3 = \bCP^1$, called \emph{ideal points}. More precisely, $P$ is the smallest convex set in $\bH^3$ such that $\partial_\infty P = V$. In the context of complex projective structures, we can think of $P$ as arising form the structure $\nu = \bCP^1 \smallsetminus V$ on the punctured sphere in the following way. Using our half-space construction from Section \ref{ssc:ends}, we can naturally consider the manifold $E(\nu)$ inside of $\bH^3$ and define $C =  \bH^3 \smallsetminus E(\nu)$.

We argue that $P = C$. Since $C$ is obtained cutting away half-spaces from $\bH^3$, it is convex. Further, $\partial_\infty C = V$ because we cut away all half-spaces bounding round disks in $\nu$. Lastly, if $C$ was not the smallest such convex set, there would be a hyperplane $Q$ cutting $C$ into non-empty $C^0 \cup C^1 = C$ such that, without loss of generality, $\partial_\infty C^0 = V$. However, this means $\partial_\infty Q$ bounds a round disk in $\nu$, contradicting the definition of $C$. 

One nice property of this perspective is that we automatically obtain a pleated structure on $\partial_0 E(\nu) = \partial P$ using Thurston's machinery \cite{thurston-notes, epstein-marden}. That is, $\partial P$ is the pleating of a hyperbolic metric on a punctured sphere along an ideal measured lamination.

We can now define an ideal polyhedron in a hyperbolic end. Fix $\sig \in \cC$, let $E = E(\sig)$ be the hyperbolic end, and let $\overline{E} = \partial_0 E \cup E$ be the metric completion of $E$. 

\begin{definition}
For a subset $\Omega \subset \partial_\infty E = \sig$, define 
\begin{enumerate}
\item $\hull_E(\Omega) = \overline{E} \smallsetminus E(\sig \smallsetminus \Omega)$.
\item $h_E(\Omega)$ to be the hyperbolic metric on $\partial_0  E(\sig \smallsetminus \Omega)$.
\item $\lambda_E(\Omega)$ to be the measured lamination on $\partial_0  E(\sig \smallsetminus \Omega)$.
\end{enumerate}
\end{definition}

When $V =  \{v_1, \ldots, v_n\}  \subset \partial_\infty E$ is a finite set of points, $\hull_E(V)$ is \emph{almost} an ideal polyhedron. Notice that topologically, $\partial_0  E(\sig \smallsetminus \Omega)$ will always be homeomorphic to $S_{g,n}$. However, the problem is that $\lambda_E(V)$ could be non-filling or a non-ideal measured lamination on $h_E(V)$. We step around this by adding this to be part of the definition.

\begin{definition} An \emph{ideal polyhedron} $P$ in a hyperbolic end $E$ is the set $P = \hull_E(V)$, where $V = \{v_1, \ldots, v_n\} \subset \partial_\infty E$, such that $\lambda_E(V)$ is a filling ideal measured lamination. For notational convenience, let $\partial P$ denote the ideal polyhedral surface $\partial_0  E(\sig \smallsetminus V)$.
\end{definition}

Given an ideal polyhedron $P = \hull_E(V)$, the {\em vertices} of $P$ are elements of $V$, the {\em edges} of $P$ are the leaves of $\lambda_E(V)$, and the {\em faces} of $P$ are the components of $\partial P \smallsetminus \lambda_E(V)$. Note that the faces of $P$ are totally geodesic and isometric to ideal hyperbolic polygons. This gives a polygon cell decomposition of $S$ associated to $P$.

We now turn to the close correspondence between Delaunay circle patterns in $(S,\sigma)$ and ideal polyhedra in $E(\sigma)$.

\begin{prop} \label{pr:correspondence}
  Let $P$ be an ideal polyhedron in $E(\sigma)$, where $\sigma\in \cC$. For each face $f$ of $P$, the outward normal at $f$ defines a unique immersed totally geodesic half-space $D_f \subset E(\sig)$ bounded by $f$, with $\partial_\infty B_f$ a round disk $D_f\subset \wt{\sig}$. The disks $D_f$, for $f$ in the set of faces of $P$, form a Delaunay circle pattern $\cD_P$. The polygon cell decomposition of $S$ associated to $P$ is isotopic to $\eta_\cD$. Further, $\theta_\cD(e)$ is equal to the exterior dihedral angle of $P$ at the corresponding edge.

 Conversely, given a Delaunay circle pattern $\cD$ in $(S,\sigma)$, each $D\in \cD$ is the boundary at infinity of an immersed half-space in $E(\sig)$. The intersection of the complement of those half-spaces is an ideal polyhedron $P\subset E(\sig)$. The polygon cell decomposition of $S$ associated to $P$ is isotopic to $\eta_\cD$. Further, for each edge $e$ of $P$, the exterior dihedral angle of $P$ at $e$ is equal to $\theta_\cD(e)$.
 
Further, when the combinatorics are fixed, this bijection is a homeomorphism.
\end{prop}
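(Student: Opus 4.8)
The plan is to exploit the half-space dictionary of Section~\ref{ssc:ends} together with the classical duality between Delaunay decompositions and convex hulls (the ``empty disk'' property), transported from $\bH^3$ to the hyperbolic end. I work in the universal cover $\wt E(\sigma)$, which by the Kulkarni--Pinkall construction \cite{kulkarni-pinkall} is glued from the half-spaces $B_D$ indexed by round disks $D$ of $\wt\sigma$. The elementary observation underlying everything is that a totally geodesic plane in $\wt E(\sigma)$ is locally isometric to a plane in $\bH^3$, hence has a well-defined circle at infinity lying in $\partial_\infty\wt E(\sigma)=\wt\sigma$, and that the half-space on the side of the complete end is one of the $B_D$; descending to $E(\sigma)$, immersed totally geodesic half-spaces correspond exactly to round disks in the sense of Section~\ref{ssc:cp}. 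Continuity of this dictionary in $\sigma\in\cC$ is standard (see \cite{epstein-marden}) and is what will give the last assertion.

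\medskip\noindent\textbf{From $P$ to $\cD_P$.} Given $P=\hull_E(V)$, its boundary $\partial P=\partial_0E(\sigma\smallsetminus V)$ is a concave pleated surface whose finitely many faces are totally geodesic ideal polygons. Each face $f$ spans a plane, defining on the side away from $P$ a half-space $B_f$ and a round disk $D_f=\partial_\infty B_f$; these descend to a finite family $\cD_P=\{D_f\}$. I then verify the three axioms of a Delaunay circle pattern: $V_{\cD_P}$ is finite because its points are the ideal vertices in $V$; $S=\bigcup_f D_f$ because $\sigma\smallsetminus\bigcup_f D_f$ is open and contained in the discrete set $\partial_\infty P=V$, hence empty; and for each $f$ the points of $\wt V_{\cD_P}$ on $\partial D_f$ are exactly the ideal vertices of $f$ (at least three), consecutive ones being joined by a leaf $e$ of $\lambda_E(V)$ shared with a neighbouring face $f'$, so $\{x_i,x_{i+1}\}=\partial D_f\cap\partial D_{f'}$ is a cutting disk pair. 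The emptiness condition $x_i\notin\mathrm{int}(D'')$ for $D''\in\wt{\cD_P}$ is precisely concavity of $\partial P$: an ideal vertex of $P$ in the interior of a support half-space $B_{f''}$ would force $B_{f''}$ to meet $P$. The $0$--, $1$-- and $2$--cells of $\eta_{\cD_P}$ are then $V$, the leaves of $\lambda_E(V)$, and the faces of $P$, i.e.\ exactly the cell decomposition associated to $P$; and for $e=\partial D_f\cap\partial D_{f'}$ the angle between the outward normals of $D_f,D_{f'}$ equals the angle between the face planes measured outside $P$, i.e.\ the exterior dihedral angle, so $\theta_{\cD_P}(e)$ is as claimed.

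\medskip\noindent\textbf{From $\cD$ to $P$, and bijectivity.} Set $V=V_\cD$ and $P=\overline E\smallsetminus\bigcup_{D\in\wt{\cD}}B_D$. A point $x\in\partial_\infty\wt E(\sigma)=\wt\sigma$ lies in $\overline P$ iff it is in no $\mathrm{int}(D)$, $D\in\wt{\cD}$; with axiom (2) ($S=\bigcup D$) and closedness of disks this forces $x\in\wt V_\cD$, and conversely $\wt V_\cD$ misses all such interiors, so $\partial_\infty P=V$. Since each $D\in\wt{\cD}$ has interior disjoint from $\wt V_\cD$ we get $\bigcup_{\wt{\cD}}B_D\subseteq E(\sigma\smallsetminus V)$, hence $\hull_E(V)\subseteq P$; for the reverse inclusion I show that the ideal polygons cut out inside the disks $D\in\wt{\cD}$ by the $1$--skeleton of $\wt\eta_\cD$ glue along their edges into a \emph{concave} pleated surface $\Sigma$ --- concavity being exactly the Delaunay/emptiness axiom (3) --- whose bending locus is that $1$--skeleton. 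Since the $1$--skeleton of $\eta_\cD$ is filling and its edges join cusps of $\sigma\smallsetminus V$, the bending lamination of $\Sigma$ is a filling ideal measured lamination; thus $\Sigma=\partial_0E(\sigma\smallsetminus V)$, $P=\hull_E(V)$ is an ideal polyhedron, and $\lambda_E(V)$ equals the $1$--skeleton, giving the stated identification of the cell decomposition and of the exterior dihedral angles with $\theta_\cD$ by the same computation as above. The two constructions are mutually inverse: the support half-spaces of the faces of $\hull_E(V)$ are the $B_D$'s, and the faces of $P_\cD$ are the ideal polygons inscribed in the disks of $\cD$.

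\medskip\noindent\textbf{Homeomorphism, and the main obstacle.} Topologise the space of ideal polyhedra with fixed combinatorics in the ends $E(\sigma)$ by the position of the vertex set $V\subset\partial_\infty E(\sigma)=\sigma$, equivalently by the tuple of support disks; this is exactly the topology on $\cC_{\eta,\theta}$ inherited from the bundle of round disks. Then $\cD\mapsto P_\cD$ is continuous since the $B_D$ vary continuously with the disks and with $\sigma$ and the vertices of $P_\cD$ are circle intersections, while $P\mapsto\cD_P$ is continuous since the totally geodesic plane through the $\geq3$ ideal vertices of a face depends continuously on those vertices and on $\sigma$ (using continuity of the half-space dictionary in $\sigma$); two mutually inverse continuous maps give the asserted homeomorphism. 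The genuine difficulty is the reverse inclusion in the converse: showing that a Delaunay circle pattern produces a \emph{filling, ideal} bending lamination with no spurious leaves --- equivalently, that the inscribed ideal polygons really assemble into the honest concave pleated boundary of $E(\sigma\smallsetminus V_\cD)$. This is precisely where axiom (3) is used, and it is the hyperbolic-end analogue of the classical fact that the Delaunay cell decomposition of a finite point set is the radial projection of the boundary of its convex hull.
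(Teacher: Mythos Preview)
Your proposal is correct and follows essentially the same approach as the paper: the half-space dictionary, verification of the Delaunay axioms in the forward direction, and the use of axiom~(3) in the converse to identify $\overline E\smallsetminus\bigcup_{D\in\cD}B_D$ with $\hull_E(V_\cD)$. The only cosmetic difference is in the converse step: the paper phrases the key point as \emph{maximality} of each $B_D$ (each $D\in\cD$ has $\geq 3$ points of $V$ on its boundary, so $B_D$ is not properly contained in any union of half-spaces over disks in $\sigma\smallsetminus V$), whereas you phrase it as \emph{concavity} of the surface obtained by gluing the inscribed ideal polygons; these are dual formulations of the same fact and both leave the verification at the same level of sketchiness.
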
 
\begin{proof} 

There is little to verify for the forward direction. For an ideal polyhedron $P$ in $E(\sig)$, the round disks $\cD = \{D_f\}$ cover $S$ and $V_\cD  = V$ is finite. For an edge $e$ edge of $P$,  let $f_e^0$ and $f_e^1$ be the two faces meeting at $e$. Then $D_{f_e^1}$ is a cutting disk for $D_{f_e^0}$, and vice versa. Since edges in $\eta_\cD$ correspond to cutting disk pairs, it follows that the cell decompositions agree. Further, the dihedral angle at $e$ is the angle between the outward normals of $B_{f_e^0}$ and $B_{f_e^0}$ measured along $e$, but this is the same as the angle between the corresponding round disks given as $\theta_\cD(e)$.

Going in the other direction, fix a Delaunay circle pattern $\cD$ on $\sig$ and let $V = V_\cD$. The main observation here is that the disks of $\cD$ are maximal. Indeed, by part (3) of the definition of Delaunay circle pattern, each round disk $D \in \cD$ has at least 3 elements of $V$ on $\partial D$. The associated half-space $B_D$ is therefore not properly contained in any union of half-spaced bounding round disks in $\sig \smallsetminus V$. By this maximality property and the fact that $\cD$ covers $S$, $P = E(\sig) \smallsetminus \bigcup_{D \in \cD} B_D$. It follows that edges of $P$ correspond to cutting disk pairs, the polygonal cell decompositions agree, and the dihedral angles are exactly given by $\theta_\cD$.

If we fix the admissible pair $(\eta,\theta)$, then any Gromov-Hausdorff converging sequence $P_n \subset E_n \to P \subset E$ of ideal polyhedra is hyperbolic ends with $(\eta, \theta)$ combinatorics will give a convergent sequence $(\sig_n, \cD_n) \in \cC_{\eta,\theta}$, and vice versa. Thus, the bijection is a homeomorphism.
\end{proof}

In light of this correspondence, we can define some notation. Given a Delaunay circle pattern $\cD$ on $(S,\sigma)$, let $P_\cD \subset E(\sig)$ be the associated ideal polyhedron, $\Sigma_\cD = \partial P_\cD = \partial_0  E(\sig \smallsetminus V_\cD)$ the corresponding ideal polyhedral surface, $h_\cD$ the hyperbolic metric on $\Sigma_\cD$ and $\lambda_\cD$ the measured lamination on $\Sigma_\cD$.

Notice that Proposition  \ref{pr:correspondence} implies that the dihedral angles of $P$ satisfy Lemma \ref{lem:angles}.

\subsection{Necessary conditions on angles}
\label{ssc:delaunay}

In this section we give a proof of Lemma \ref{lem:angles}.

\begin{proof}[Proof of Lemma \ref{lem:angles}]
  Let $\theta_\cD: \cE(\eta^*_\cD) \to (0,\pi)$ be the angle function associated to a Delaunay circle pattern $\cD$. Consider the polyhedron $P_\cD$ and polyhedral surface $\Sigma_\cD$ corresponding to $\cD$ defined in the previous section.

  Condition (1) is equivalent to enforcing that, for each ideal vertex $v$ of $P_\cD$, the sum of the dihedral angles of $P_\cD$ at $v$ is equal to $2\pi$. This follows directly from considering the link of $P_\cD$ at $v$ --- the intersection of $P_\cD$ with an embedded horosphere in $E(\sig)$ centered at $v$. By construction, this link is a Euclidean polygon with exterior angles equal to the dihedral angles of $P_\cD$ at $v$, so the angles sum to $2\pi$.

 To prove condition (2), let $\gamma$ be a homotopically trivial, non-backtracking closed edge path in $\eta^*_\cD$ which does not bound a face. We can lift $\gamma$ to a closed loop $\wt{\gamma} = [e_1, \ldots, e_k]$ in $\wt{\eta}_\cD^*$. Since the vertices of $\eta_\cD^*$ correspond to faces of $P_\cD$, $\wt{\gamma}$ defines a sequence $f_1, \ldots, f_k$ of ideal hyperbolic polygons. Topologically, the sequential edge gluing $T_\gamma = \bigcup_i  f_i$ is a cylinder which carries a hyperbolic metric. Further, $\partial T_\gamma$ is totally geodesic with cusps, so $T_\gamma$ is a convex surface and $\wt{\gamma}$ has a unique geodesic representative $g$ in $T_\gamma$. Since $\gamma$ was non-backtracking, $k \geq 3$.
  
We can realize $g$ as a piecewise geodesic loop on the lift of $\Sigma_\cD$ in $\wt{E}(\sig)$. Let $g_i$ be the image of the geodesic sub-arc of $g$ crossing $f_i$ and let $\al_i$ be the angle between $g_i$ and $g_{i+1}$. It follows that $\dev_{\wt{E}(\sig)}(g)$ is an immersed polygonal path in $\bH^3$ with exterior angles $\al_i$. Since $k \geq 3$ and the dihedral angles of $P_\cD$ are less than $\pi$, the vertices of $\dev_{\wt{E}(\sig)}(g)$ are not all collinear. Additionally, $\al_i \leq \theta_\cD(e_i)$ for all $i$, by construction.  By \cite[Theorem 3.1]{HR}, we have that $$2 \pi < \sum_i \al_i \leq \sum_i \theta_\cD(e_i),$$
which completes the proof.
\end{proof}

Note that a different, purely 2-dimensional argument can be found in \cite{bobenko-springborn} for hyperbolic or Euclidean circle patterns. However it does not seem to generalize easily to Delaunay circle patterns on surfaces equipped with general complex projective structures. 

\subsection{Balanced metrics and circle packings}

For $n\geq 1$, let $\cT_{g,n}$ denote the space of complete hyperbolic structures on $S_{g,n}$ of finite area, considered up to isotopy fixing the cusps. We will be interested in the setting where the cusps of these metrics correspond to vertices of polygonal cell decompositions. This context allows up to define the notion of ``cusp angles'' as follows.

\begin{definition}
Let $\eta$ be a polygonal cell decomposition of $S$ and let $h$ be a complete hyperbolic metric on $S$ with cusps at the vertices of $\eta$. Let $\cL$ be the geodesic realization of $\eta$ on $h$. Given a vertex $v \in \cV(\eta)$, let $e_1,\cdots, e_{k}$ be leaves of $\cL$ adjacent to $v$, in cyclic order. Fixing an embedded horocycle $c$ in $h$ centered at $v$, we define the {\em cusp angle} $\phi_i(v)$ to be the horocyclic length of the segment of $c$ between $e_i$ and $e_{i+1}$.
\end{definition}

Notice that replacing $c$ by another embedded horocycle centered at $v$ multiplies all cusp angles by the same number, so that the cusp angles $(\phi_i(v))_{i=1}^k$ are well-defined as elements of $(\bR_{>0})^k/\bR_{>0}$, where $\bR_{>0}$ acts on $(\bR_{>0})^k$ by diagonal multiplication.

\begin{definition}
Let $\kappa$ be a polygonal cell decomposition of $S$ with {\em all vertices of degree $4$} and fix $h\in \cT_{g,n}$ with cusps at the vertices of $\kappa$. We say that $h$ is {\em $\kappa$-balanced} if $\phi_1(v) = \phi_3(v)$ and $\phi_2(v) = \phi_4(v)$ for all $v \in \cV(\kappa)$. Let $\cB_S^\kappa \subset \cT_{g,n}$ denote the space of $\kappa$-balanced metrics with cusps at the vertices of $\kappa$.
\end{definition}

\begin{prop} \label{prop:balanced1}
 Let $P$ be a right angled ideal polyhedron in a hyperbolic end $E$, and let $h$ be the induced metric on $\partial P$. Then $h$ is $\kappa$-balanced, where $\kappa$ is the polygonal cell decomposition associated to $P$. 

Conversely, if $\kappa$ is a polygonal cell decomposition of $S$ with all vertices of degree $4$, each $\kappa$-balanced metric $h \in \cB_S^\kappa$ is induced by a unique pair $P \subset E$, where $P$ is a right angled ideal polyhedron and $E$ a hyperbolic end.

Further, when the combinatorics are fixed, this bijection is a homeomorphism.
\end{prop}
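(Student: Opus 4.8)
The plan is to deduce Proposition \ref{prop:balanced1} from the correspondence between right-angled ideal polyhedra in hyperbolic ends and circle packings (Proposition \ref{pr:correspondence} together with Remark \ref{rk:relation}), reducing everything to a statement about the induced metric on the pleated surface $\partial P$. So the first step is to recall that a right-angled ideal polyhedron $P\subset E$ corresponds, via Proposition \ref{pr:correspondence}, to a Delaunay circle pattern $\cD$ with all intersection angles $\pi/2$; when $\eta^*_\cD$ is bipartite this is exactly a circle packing $\cP$ together with its dual family (Remark \ref{rk:relation}), and the associated cell decomposition $\kappa=\eta_\cD$ has all vertices of degree $4$ --- the four disks meeting at each vertex of $V_\cD$ being alternately two from $\cP$ and two duals, pairwise tangent along opposite edges.

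The geometric heart is the balancing identity. Fix an ideal vertex $v$ of $P$ (equivalently, a point of $V_\cD$), and intersect $P$ with a small embedded horosphere $H_v$ in $E$ centered at $v$; this link is a Euclidean quadrilateral $Q_v$ whose exterior angles are the four dihedral angles of $P$ at $v$, all equal to $\pi/2$, so $Q_v$ is a Euclidean \emph{rectangle}. The four sides of $Q_v$ are horocyclic segments lying on the four faces of $P$ at $v$; cutting the horocycle $c$ of $h=h_{\partial P}$ centered at $v$ along the four edges $e_1,e_2,e_3,e_4$ of $\lambda_\cD$ at $v$ produces exactly the four cusp-angle segments $\phi_1(v),\dots,\phi_4(v)$, and under the identification of $\partial P$ with the faces of $P$ these are the four sides of the rectangle $Q_v$. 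Opposite sides of a rectangle have equal length, hence $\phi_1(v)=\phi_3(v)$ and $\phi_2(v)=\phi_4(v)$ --- this is precisely $\kappa$-balancedness. (One must be mildly careful that the cyclic order of the edges at $v$ in $\kappa$ matches the cyclic order of the sides of $Q_v$, which is immediate from the construction of $\eta_\cD$.)

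For the converse, suppose $h\in\cB_S^\kappa$ is a $\kappa$-balanced metric, $\kappa$ a polygonal cell decomposition with all vertices of degree $4$. The strategy is: first, one checks that the pair $(\kappa^*,\pi/2)$ is admissible in the sense of Lemma \ref{lem:angles} --- condition (1) says every face of $\kappa^*$ is a $4$-gon, which is dual to every vertex of $\kappa$ having degree $4$, and condition (2) is a combinatorial closed-path inequality that one verifies directly. By Theorem \ref{tm:bs}, $(\kappa^*,\pi/2)$ is realized by a Delaunay circle pattern $\cD_h$ on a \emph{unique} Fuchsian structure, i.e.\ by a right-angled ideal polyhedron $P_0$ in a Fuchsian hyperbolic end, whose boundary $\partial P_0$ carries \emph{some} $\kappa$-balanced metric $h_0$. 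To upgrade this to an arbitrary prescribed $h$, the natural move is to invoke the main classification of the next section (the statement promised after Proposition \ref{prop:balanced1}, and the Hodgson--Rivin-type rigidity/existence for ideal polyhedra in hyperbolic ends with prescribed dihedral angles and prescribed induced metric): fixing all exterior dihedral angles equal to $\pi/2$, an ideal polyhedron $P\subset E$ is determined by, and its induced metric realizes, any admissible metric on $\partial P$; the admissible metrics for the all-$\pi/2$ angle data are exactly the $\kappa$-balanced ones, because the link computation above shows the rectangle (hence balancing) condition is equivalent to the dihedral angles at each vertex all being $\pi/2$. Thus $h$ is induced by a unique such $P\subset E$. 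The main obstacle is precisely this last point: making rigorous that ``$\kappa$-balanced'' is an exact characterization of the induced metrics of all-$\pi/2$ ideal polyhedra, i.e.\ that the balancing condition is not only necessary (the forward direction) but sufficient to reconstruct the polyhedron; this is where one genuinely needs the induced-metric parametrization theorem rather than soft arguments, and where one must be careful that $\kappa$-balancedness encodes \emph{all} the constraints (in particular that no extra global obstruction appears beyond the local rectangle conditions).

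Finally, for the homeomorphism statement with fixed combinatorics: by Proposition \ref{pr:correspondence} the correspondence ``ideal polyhedron in a hyperbolic end $\leftrightarrow$ $(\sigma,\cD)\in\cC_{\kappa^*,\pi/2}$'' is already a homeomorphism for the Gromov--Hausdorff topology, and taking induced metrics $P\mapsto h_{\partial P}$ is continuous; continuity of the inverse $h\mapsto P$ follows from the continuous dependence in the induced-metric parametrization theorem, so the composite $\cB_S^\kappa\to\{\text{such }P\subset E\}$ is a homeomorphism as claimed.
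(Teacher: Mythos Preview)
Your forward direction is correct and matches the paper's argument exactly: the horospherical link at each ideal vertex is a Euclidean rectangle, and equality of opposite sides is precisely $\kappa$-balancedness.

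The converse, however, has a genuine gap. You appeal to ``the main classification of the next section \ldots\ Hodgson--Rivin-type rigidity/existence for ideal polyhedra in hyperbolic ends with prescribed dihedral angles and prescribed induced metric.'' No such theorem exists in the paper, and indeed such a result is not known --- it is essentially the content of Question~\ref{q:eta}, which the paper leaves open. So you are assuming the very sort of statement the paper is trying to make progress towards. (Your preliminary step of checking that $(\kappa^*,\pi/2)$ is admissible is also not as easy as you suggest: condition (2) of Lemma~\ref{lem:angles} is a nontrivial inequality, not a combinatorial tautology; and in any case the paper's proof does not use admissibility or Theorem~\ref{tm:bs} at all.)

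The paper's argument for the converse is instead a direct, elementary \emph{construction}. Given $h\in\cB_S^\kappa$, realize the $1$-skeleton of $\kappa$ geodesically; cutting along it decomposes $(S_{g,n},h)$ into ideal hyperbolic polygons. Place each such polygon on the boundary of a half-space in $\bH^3$ and glue neighbouring half-spaces along shared ideal edges at a dihedral angle of $\pi/2$, matching the horocyclic markings. This produces a hyperbolic structure on $S_{g,n}\times\bR_{>0}$. The key point --- and the one you are missing --- is that the $\kappa$-balancing condition is \emph{exactly} what forces the gluing holonomy around each cusp to be trivial (the rectangle closes up), so the ideal boundary can be filled in at each cusp to give a complex projective structure $\sigma_h$ on $S$; then $E=E(\sigma_h)$ and $P=\hull_E(V)$ for $V$ the set of filled-in points. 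Uniqueness and the homeomorphism statement are then immediate from the construction, with no appeal to any external rigidity theorem.
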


\begin{proof}
 Let $P$ be a right-angled ideal polyhedron in $E$, and let $h$ be the metric on $\partial P$. Given a face $f$ of $\kappa^*$, let $v_f$ be the corresponding ideal vertex of $P$. The link of $P$ at $v_f$ --- the intersection of $P$ with an embedded horosphere in $E$ centered at $v_f$ --- is a Euclidean polygon with right angles, that is, a rectangle. Therefore, its opposite edges have equal length, which means precisely that $h$ is $\kappa$-balanced at $v_f$.

Conversely, fix $h \in \cB_S^\kappa$ and let $\cL$ be the geodesic realization of $\kappa$ on $h$. Fix an embedded horocycle $c_i$ for each cusp of $h$. Cutting the surface along $\cL$ gives a disjoint union of ideal hyperbolic polygons $F_j$, where each ideal corner is marked by a sub-arc of one the $c_i$'s. We can build a hyperbolic end for $S_{g,n}$ as follows. Realize each ideal polygon $F_j$ on the boundary of some half space $B_j$. If two faces  share an ideal edge, we glue the two associated half spaces along that edge at an angle of $\pi/2$ such that the end points of the marked sub-arcs meet. This gluing gives a non-complete hyperbolic structure on $S_{g,n} \times \bR>0$. The balancing condition guarantees that the gluing around a cusp has trivial holonomy and, therefore, the conformal structure at infinity can be ``filled'' at each cusp to a complex projective structure $\sig_h$ on $S$. Let $V$ be the set of filled points on $\sig_h$, then $E = E(\sig_h)$ and $P = \hull_E(V)$. By construction, the metric of $\partial P$ is exactly $h = h_E(V)$.

If we fix the cell decomposition $\kappa$, then any Gromov-Hausdorff converging sequence $P_n \subset E_n \to P \subset E$ of right angled ideal polyhedra is hyperbolic ends with cell decomposition $\kappa$ will give a convergent sequence of metrics on $\partial P_n$, and vice versa.
\end{proof}

Next, we give a corollary of Proposition \ref{prop:balanced1} in the context of our setup. This corollary was known to Kojima, Mizushima and Tan \cite[Proposition 3.1]{KMT}.

\begin{cor}\label{cor:variety1} Let $\tau$ be a polygon cell decomposition of $S$. Then $C_\tau$ is a real (semi)algebraic set of formal dimension $6g-6$.
\end{cor}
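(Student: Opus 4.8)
The plan is to deduce this from Proposition \ref{prop:balanced1} together with the $\lambda$-length description of decorated Teichm\"uller space. First I would record the identification of $\cC_\tau$ with a space of balanced metrics. As in the proof of Theorem \ref{tm:packing}, Remark \ref{rk:relation} identifies $\cC_\tau$ with $\cC_{\eta,\pi/2}$, where $\eta$ is the ``midpoint'' cell decomposition of Remark \ref{rk:nerves}; by construction every vertex of $\eta$ has degree $4$, and the number $n$ of its vertices equals the number of edges of $\tau$. Chaining the homeomorphism of Proposition \ref{pr:correspondence} (which identifies $\cC_{\eta,\pi/2}$ with the space of right-angled ideal polyhedra in hyperbolic ends carrying the combinatorics $\eta$) with the one of Proposition \ref{prop:balanced1} (which identifies these polyhedra with the $\eta$-balanced metrics), we obtain a homeomorphism $\cC_\tau \cong \cB_S^\eta \subset \cT_{g,n}$. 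It therefore suffices to exhibit $\cB_S^\eta$ as a real semialgebraic set of formal dimension $6g-6$.

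Next I would fix an algebraic model of $\cT_{g,n}$ in which the cusp angles are algebraic functions. Refine $\eta$ to an ideal triangulation $\Delta$ of $S_{g,n}$ by adding diagonals inside the faces; then $|\cE(\Delta)| = 6g-6+3n$. Penner's decorated Teichm\"uller theory gives a real-analytic homeomorphism from the space $\widetilde{\cT}_{g,n}$ of metrics in $\cT_{g,n}$ decorated by a choice of horocycle at each cusp onto $\bR_{>0}^{\cE(\Delta)}$, via the $\lambda$-length coordinates; this exhibits $\widetilde{\cT}_{g,n}$ as a semialgebraic set of dimension $6g-6+3n$. By Penner's horocyclic length formula --- in an ideal triangle with $\lambda$-lengths $a,b,c$, the horocyclic segment cut off in the corner opposite the side $a$ has length $a/(bc)$ --- each cusp angle $\phi_i(v)$, being a finite sum of such terms over the fan of $\Delta$-triangles lying between two consecutive edges of $\eta$ at $v$, is a Laurent polynomial in the $\lambda$-lengths. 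In particular it is an algebraic function, and the balancing conditions $\phi_1(v)=\phi_3(v)$, $\phi_2(v)=\phi_4(v)$ become, after clearing denominators, polynomial equations; being scale-invariant, they descend to $\cT_{g,n}$.

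I would then finish the count. By Proposition \ref{prop:balanced1}, \emph{every} $\eta$-balanced metric is induced by a right-angled ideal polyhedron in a hyperbolic end with combinatorics $\eta$, so $\cB_S^\eta$ is cut out inside $\cT_{g,n}$ purely by the $2n$ balancing equations (two at each of the $n$ degree-$4$ vertices), with no further open conditions needed. Hence the preimage $\widetilde{\cB}$ of $\cB_S^\eta$ in $\widetilde{\cT}_{g,n} \cong \bR_{>0}^{\cE(\Delta)}$ is a real semialgebraic set cut out by $2n$ polynomial equations, of formal dimension $(6g-6+3n)-2n = 6g-6+n$; and $\cB_S^\eta$ is the quotient of $\widetilde{\cB}$ by the free $\bR_{>0}^n$-action rescaling the horocycles, which in logarithmic $\lambda$-coordinates is linear, so $\cB_S^\eta$ is semialgebraic of formal dimension $6g-6+n-n = 6g-6$. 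Transporting this structure through the homeomorphism of the first paragraph yields the corollary. (Here ``formal dimension'' means the ambient dimension minus the number of defining equations; we do not claim $\cB_S^\eta$ is actually a manifold of this dimension --- that is point (1) of the deformation strategy, not addressed here.)

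The one genuinely substantive step is the second paragraph: the cusp angles are a priori transcendental functions of the hyperbolic metric, and the point is that in $\lambda$-length coordinates they become Laurent polynomials, which rests on Penner's explicit horocyclic-length formula together with the fact that the geodesic realization of $\eta$ is compatible with that of the refining triangulation $\Delta$. Once that is in hand the dimension bookkeeping is immediate; note that the number $n$ of cusps cancels, so the formal dimension $6g-6$ matches $\dim \cT$, which is exactly the numerical coincidence that makes Conjecture \ref{cj:kmt} plausible. An alternative route, more in the spirit of the rest of the paper, is to parametrize the face planes of the ideal polyhedron by points of de Sitter space and its holonomy by a point of $\mathrm{Hom}(\pi_1 S,\mathrm{SO}(3,1))$, imposing the right-angle equation on each edge; this is essentially what \cite{KMT} do in the one-vertex case.
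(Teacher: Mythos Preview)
Your proof is correct and follows the same overall strategy as the paper: identify $\cC_\tau$ with the space $\cB_S^\kappa$ of balanced metrics via Propositions \ref{pr:correspondence} and \ref{prop:balanced1}, choose coordinates on the Teichm\"uller space of the punctured surface in which the cusp-angle balancing conditions become polynomial, and count equations. The only difference is the choice of coordinate system. The paper works directly in $\cT_{g,n}$ (dimension $6g-6+2n$) using \emph{multiplicative shearing coordinates} along a maximal ideal lamination extending the $1$-skeleton of $\kappa$, and observes that horocyclic-arc lengths --- hence the $2n$ balancing relations --- are polynomial in these shears; you instead lift to Penner's decorated Teichm\"uller space $\widetilde{\cT}_{g,n}$ (dimension $6g-6+3n$) with $\lambda$-length coordinates, invoke the horocyclic formula $a/(bc)$, and then quotient by the free $\bR_{>0}^n$-action rescaling the horocycles. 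Since shearing coordinates are precisely cross-ratios of $\lambda$-lengths, the two parametrizations are equivalent and the dimension count agrees either way. Your route has the minor advantage that Penner's formula is explicit and well known; the paper's route avoids the extra quotienting step and stays inside $\cT_{g,n}$ throughout.
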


\begin{proof} By Propositions \ref{prop:balanced1} and  \ref{pr:correspondence}, $C_\tau$ is homeomorphic to $\cB_S^\kappa$, where $\kappa$ is the midpoint cell decomposition from Remark \ref{rk:nerves}. Notice that $\cB_S^\kappa \subset \cT_{g,n}$ where $n = \# \cV(\kappa)$. For every vertex $v$ of $\kappa$, we have the cusp angle relations $\phi_1(v) = \phi_3(v)$ and $\phi_2(v) = \phi_4(v)$. If we complete the 1-skeleton of $\kappa$ to a maximal ideal lamination $\lambda$ (one where all complementary regions are triangles), then we can parametrize $\cT_{g,n}$ by {\em multiplicative shearing coordinates} with respect to $\lambda$.

Briefly, these coordinates assign to each leaf $\ell$ of $\lambda$ a number $s_\ell \in \bR_{>0}$ and are defined as follows. For $h \in \cT_{g,n}$, realize $\lambda$ as a lamination and draw the the incircle of each complementary ideal triangle. Then, for every leaf $\ell$ of $\lambda$, take the signed right-hand distance $d_\ell$ between the tangency points of the two incircles on either side. Define $s_\ell = \exp(d_\ell)$. The Euler characteristic relation tells us there are $6g-6+3n$ leaves of $\lambda$, so this defines a map $T_{g,n} \to \bR_{>0}^{6g-6+3n}$. The image corresponds to points where for each vertex $v$ of $\lambda$, the product of all shearing coordinates around the vertex is $1$. This guarantees that horocycles around $v$ ``close up'' to form a cusp. Indeed, notice that if one takes an ideal triangle with a horocyclic arc of length $l$ cutting off a cusp, then continuing that arc into a neighboring triangle with shear $s_\ell$ with give a horocyclic arc of length $s_\ell \cdot l$. To get a complete cusp, we must have the full product off all the shears around a cusp equal to $1$. The coordinate map will be a homeomorphism onto its image, a ball of dimension $6g-6+2n$. See  \cite{bonahon-toulouse} for more details.

The relations $\phi_1(v) = \phi_3(v)$ and $\phi_2(v) = \phi_4(v)$, which measure lengths of horocyclic arcs, become polynomial relations in the multiplicative shearing coordinates, giving us that $\cB_S^\kappa$ is a real  algebraic set of formal dimension $6g-6$.
\end{proof}

\begin{remark}
  If we could show that $\cB_S^\kappa$ is irreducible, then it would follow that $C_\tau$ has dimension exactly $6g-6$ at the smooth points. Kojima, Mizushima and Tan have shown that $C_\tau$ admits a neighborhood of the Koebe-Andreev-Thuston solution that is homeomorphic to a ball of dimension $6g-6$, see \cite[Lemma 3.2]{KMT}.

  In fact a ``double doubling'' construction in \cite{KMT} shows that $C_\tau$ is smooth of dimension $6g-6$ at all points $(\sigma,\cP)$ where $\sigma$ is quasi-Fuchsian, that is, when $\dev_\sig$ is injective. Indeed, at such a point, one can consider the manifold $$M_1(\sigma,\cP) = (\bH^3 \smallsetminus \wt{E}(\sig \smallsetminus V_{\cD_\cP}))/\pi_1 S,$$ which is a (non-complete) hyperbolic manifold with polyhedral boundary, with right angles at all its edges and a connected ideal boundary. One can glue two copies of $M_1(\sigma,\cP)$ along the faces corresponding to the dual disks, to obtain a hyperbolic manifold $M_2(\sigma,\cP))$ with totally geodesic boundary. Then, we can glue two copies of $M_2(\sigma,\cP))$ along their boundaries to obtain a complete hyperbolic manifold $M_4(\sigma)$ with an ideal boundary composed of cusps and four surfaces of genus $g$. This manifold admits an isometric action of $\Z/2\Z\times \Z/2\Z$ permuting the ideal boundary surfaces. Moreover, any small deformation of $\wt{M}_4(\sigma,\cP)$ invariant under the action of $\Z/2\Z\times \Z/2\Z$ arises from a small deformation of $(\sigma, \cP)$. The Ahlfors-Bers Theorem therefore shows that near $(\sigma,\cP)$, $C_\tau$ is smooth of dimension $6g-6$.
\end{remark}

\subsection{Metrics associated to Delaunay circle patterns}

Just as the balanced metrics are precisely the metrics associated to circle packings, there is a clear description of the metrics associated to Delaunay circle patterns.

\begin{definition}
  Let $(\eta, \theta)$ be an admissible pair for $S$ and fix $h\in \cT_{g,n}$ with cusps at the vertices of $\eta$. We say that $h$ is {\em $(\eta, \theta)$-balanced} if for every face $f$ in $\eta^*$ with $\partial f = [e_1, \ldots, e_k]$, the sequences $(\phi_1(v_f), \ldots, \phi_k(v_f))$ and $(\theta(e_1), \ldots, \theta(e_k))$ are the edge lengths and exterior angles of a Euclidean polygon, respectively. Here, $v_f$ is the dual vertex in $\eta$ and $\phi_i$ are the cusp angles between $e_i^*$ and $e_{i+1}^*$. This condition can be written explicitly as
  \begin{enumerate}
\item $\theta(e_1)+\cdots +\theta(e_n) = 2\pi$ and
\item $\phi_1(v_f)+ \phi_2(v_f) e^{i\theta(e_1)} + \phi_3(v_f) e^{i(\theta(e_1)+\theta(e_2))} + \cdots + \phi_n(v_f)e^{i(\theta(e_1)+\cdots+\theta(e_{n-1}))} = 0.$
\end{enumerate}

 Notice that (1) always holds as $(\eta,\theta)$ is an admissible pair. Let $\cB_S^{\eta,\theta} \subset \cT_{g,n}$ denote the space of $(\eta,\theta)$-balanced metrics with cusps at the vertices of $\eta$.
\end{definition}

Note that $(\phi_1(v_f), \ldots, \phi_k(v_f))$ are well-defined only up to multiplication of all the terms by a positive number. However this transformation does not change whether the cusp angles are edge lengths of a Euclidean polygon with exterior angles $(\theta(e_1), \ldots, \theta(e_k))$ .

\begin{prop} \label{prop:balanced2}
 Let $P$ be an ideal polyhedron in a hyperbolic end $E$ and let $h$ be the induced metric on $\partial P$. Let $\eta$ be the polygonal cell decomposition of $P$ and let $\theta : \cE(\eta^*) \to (0,\pi)$ the exterior angle function, then $h$ is $(\eta, \theta)$-balanced.
 
 Conversely, if $(\eta, \theta)$ is an admissible pair for $S$, then each metric $h \in \cB_S^{\eta,\theta}$ is induced by a unique pair $P \subset E$, where $P$ is an ideal polyhedron and $E$ a hyperbolic end.
 
Further, when the combinatorics are fixed, this bijection is a homeomorphism.
\end{prop}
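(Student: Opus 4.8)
The plan is to follow the template of the proof of Proposition~\ref{prop:balanced1}, replacing the right-angled rectangular links of the circle-packing case by general Euclidean polygons, and using the admissibility of $(\eta,\theta)$ in place of the degree-$4$ condition on $\kappa$.

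\emph{Forward direction.} Let $P=\hull_E(V)\subset E$ be an ideal polyhedron with induced metric $h$ on $\partial P$, associated polygonal cell decomposition $\eta$, and exterior dihedral angle function $\theta$. Fix a face $f$ of $\eta^*$, say $\partial f=[e_1,\dots,e_k]$, and let $v_f\in V$ be the corresponding ideal vertex of $P$. I would examine the link of $P$ at $v_f$, namely the intersection of $P$ with a sufficiently small embedded horosphere in $E$ centered at $v_f$. This link is a Euclidean polygon whose sides are the horocyclic segments cut off by the faces of $P$ incident to $v_f$; the lengths of these segments are by definition the cusp angles $\phi_1(v_f),\dots,\phi_k(v_f)$, and its exterior angles are the dihedral angles of $P$ along the edges at $v_f$, that is $\theta(e_1),\dots,\theta(e_k)$. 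Hence the $(\phi_i(v_f))$ and the $(\theta(e_i))$ are precisely the side lengths and exterior angles of an honest Euclidean polygon, which is the definition of $h$ being $(\eta,\theta)$-balanced at $v_f$. As $f$ was arbitrary, $h\in\cB_S^{\eta,\theta}$.

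\emph{Converse.} Given an admissible pair $(\eta,\theta)$ and $h\in\cB_S^{\eta,\theta}$, let $\cL$ be the geodesic realization of $\eta$ on $h$ and fix an embedded horocycle $c_v$ at each cusp $v$. Cutting $(S,h)$ along $\cL$ yields a disjoint union of ideal hyperbolic polygons $F_j$, each ideal corner carrying a marked point on its two edges (the feet of the $c_v$'s). Realize each $F_j$ on the boundary hyperplane of a half-space $B_j\subset\bH^3$, and whenever two polygons share an ideal edge $e$ glue the two half-spaces along $e$ at exterior dihedral angle $\theta(e)$, choosing the unique translation along $e$ for which the two marked points are identified. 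This produces a non-complete hyperbolic structure on $S_{g,n}\times\bR_{>0}$. The crucial point is that the $(\eta,\theta)$-balancing condition is exactly what makes the holonomy around each ideal vertex trivial: the link of a putative vertex $v_f$, assembled in a horosphere from the Euclidean links of the $B_j$ glued at successive lines with exterior angles $\theta(e_i)$, develops to a Euclidean polygonal path with side lengths $\phi_i(v_f)$ and exterior angles $\theta(e_i)$; condition (1), automatic from admissibility, forces the total turning to be $2\pi$, and condition (2) forces the path to close up, so the link is a genuine Euclidean polygon and the cusp closes up with trivial holonomy. Consequently the conformal structure at infinity, a priori defined only on $S_{g,n}$, extends across the punctures to a complex projective structure $\sigma_h\in\cC$. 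Setting $E=E(\sigma_h)$ and letting $V$ be the set of filled points, the $B_j$ are exactly the half-spaces bounding the maximal round disks of $\wt{\sigma_h}$ through the points of $V$, so $P:=\hull_E(V)=E\smallsetminus\bigcup_j B_j$ has $\partial P$ isometric to $(S,h)$, with associated cell decomposition $\eta$ and exterior angle function $\theta$. Since the $1$-skeleton of $\eta$ is filling and its edges join cusps to cusps, $\lambda_E(V)$ is a filling ideal measured lamination, so $P$ is an ideal polyhedron in the required sense.

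\emph{Uniqueness and the homeomorphism.} For uniqueness, observe that the reconstruction is canonical: $h$ together with $\eta$ determines the polygons $F_j$ and their gluing pattern, $\theta$ determines the gluing angles, and the marked-point condition determines the gluing translations, while different choices of the auxiliary horocycles $c_v$ change nothing; so the pair $P\subset E$ inducing $h$ with combinatorics $(\eta,\theta)$ is unique. Finally, with combinatorics fixed, both directions are continuous for the Gromov--Hausdorff topology: a convergent sequence $P_n\subset E_n\to P\subset E$ of ideal polyhedra with cell decomposition $\eta$ and angles $\theta$ yields convergent induced metrics $h_n\to h$ in $\cB_S^{\eta,\theta}$, and conversely a convergent sequence $h_n\to h$ in $\cB_S^{\eta,\theta}$ feeds through the cut-and-reglue construction to a convergent sequence of polyhedra; hence the bijection is a homeomorphism.

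\emph{Main obstacle.} The delicate step is the converse, specifically verifying that the purely $2$-dimensional closing-up condition~(2) for the Euclidean link is equivalent to triviality of the $3$-dimensional holonomy around each ideal vertex, and that this triviality genuinely allows one to fill in the conformal boundary across the punctures so as to land in $\cC$; the remainder is bookkeeping parallel to Proposition~\ref{prop:balanced1}.
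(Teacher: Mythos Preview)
Your proposal is correct and follows exactly the approach the paper indicates: the paper's proof consists of the single sentence ``The proof is a direct generalization of the argument in Proposition~\ref{prop:balanced1},'' and you have written out that generalization in full, replacing the rectangular links by general Euclidean polygons and the right-angle condition by the $(\eta,\theta)$-balancing condition. Your write-up is in fact more detailed than the paper's, but the underlying argument is identical.
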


\begin{proof} The proof is a direct generalization of the argument in Proposition \ref{prop:balanced1}.
\end{proof}

\begin{cor} Let $(\eta, \theta)$ be an admissible pair for $S$. Then $C_{\eta, \theta}$ is a real (semi)algebraic set of formal dimension $6g-6$.
\end{cor}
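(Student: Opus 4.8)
The plan is to mirror the proof of Corollary \ref{cor:variety1} almost verbatim, replacing the degree-$4$ balancing conditions by the general $(\eta,\theta)$-balancing conditions. By Propositions \ref{prop:balanced2} and \ref{pr:correspondence}, $\cC_{\eta,\theta}$ is homeomorphic to $\cB_S^{\eta,\theta}\subset \cT_{g,n}$, where $n=\#\cV(\eta)$, so it suffices to show that $\cB_S^{\eta,\theta}$ is cut out inside $\cT_{g,n}$ by finitely many real (semi)algebraic equations and inequalities, and to count the resulting formal dimension. Note that $\theta$ takes values in $(0,\pi)$, so the coefficients $e^{i\theta(e_j)}$ appearing in the balancing equations are \emph{fixed} complex numbers; only the cusp angles $\phi_i(v_f)$ vary with the point of $\cT_{g,n}$, so there is no transcendence issue beyond the one already handled in Corollary \ref{cor:variety1}.

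First I would fix, as in the proof of Corollary \ref{cor:variety1}, a completion of the $1$-skeleton $\eta_1$ to a maximal ideal lamination $\lambda$ on $S_{g,n}$ (all complementary regions ideal triangles) and use the multiplicative shearing coordinates with respect to $\lambda$ to identify $\cT_{g,n}$ with the real algebraic subset of $\bR_{>0}^{6g-6+3n}$ defined by the $n$ vertex relations $\prod_{\ell\ni v}s_\ell=1$; this is a manifold of dimension $6g-6+2n$, exactly as recalled there. Next I would express the cusp angles: for a vertex $v$ of $\eta$ and an embedded horocycle $c$ at $v$, the horocyclic length $\phi_i(v)$ of the arc of $c$ between two consecutive leaves $e_i,e_{i+1}$ of $\eta_1$ at $v$ is, after normalizing $c$, a monomial (a product of powers) in the shearing coordinates $s_\ell$ of the leaves of $\lambda$ crossed between $e_i$ and $e_{i+1}$ — this is the same ``horocyclic arc continued across a triangle with shear $s_\ell$ gets multiplied by $s_\ell$'' computation used in Corollary \ref{cor:variety1}. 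Hence each $\phi_i(v_f)$ is a Laurent monomial in the $s_\ell$.

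Then I would plug these monomials into the balancing equation (2) of the definition of $(\eta,\theta)$-balanced, namely $\sum_{j=1}^{k}\phi_j(v_f)\,e^{i(\theta(e_1)+\cdots+\theta(e_{j-1}))}=0$, for each face $f$ of $\eta^*$. Splitting into real and imaginary parts gives, for each face $f$, two real polynomial (in fact Laurent-polynomial) equations in the $s_\ell$ with fixed real coefficients $\cos(\theta(e_1)+\cdots+\theta(e_{j-1}))$ and $\sin(\cdots)$; clearing denominators makes them genuinely polynomial. Together with the cusp-product relations $\prod_{\ell\ni v}s_\ell=1$ and the open conditions $s_\ell>0$, these exhibit $\cB_S^{\eta,\theta}$ as a real semialgebraic set. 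The Euler-characteristic bookkeeping is identical to Corollary \ref{cor:variety1}: there are $2\cdot\#(\text{faces of }\eta^*)=2\cdot\#\cV(\eta)=2n$ balancing equations (since condition (1), $\sum\theta=2\pi$, holds automatically and is not a constraint on $s$), imposed on the $(6g-6+2n)$-dimensional Teichm\"uller space, giving formal dimension $6g-6+2n-2n=6g-6$.

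The one point requiring slightly more care than in Corollary \ref{cor:variety1} — and what I would flag as the main obstacle, though it is mild — is checking that the two scalar equations coming from a single face $f$ are truly ``$2$ independent constraints'' in the formal dimension count, i.e.\ that a generic face contributes $2$ to the codimension rather than $1$. In the degree-$4$ right-angled case of Corollary \ref{cor:variety1} the equation $\phi_1=\phi_3,\ \phi_2=\phi_4$ is visibly two real equations; here one must note that $\sum_j \phi_j(v_f)e^{i\Theta_j}=0$ with the $e^{i\Theta_j}$ not all real (which holds because $k\geq3$ and $\theta\in(0,\pi)$, so the partial sums $\Theta_j$ are not all multiples of $\pi$) is a genuinely $\bC$-valued, hence generically codimension-$2$, condition. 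Once that is observed, the dimension count goes through verbatim and one concludes, exactly as in Corollary \ref{cor:variety1}, that $\cC_{\eta,\theta}\cong\cB_S^{\eta,\theta}$ is a real (semi)algebraic set of formal dimension $6g-6$.
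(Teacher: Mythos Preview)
Your proposal is correct and follows essentially the same route as the paper: identify $\cC_{\eta,\theta}$ with $\cB_S^{\eta,\theta}$ via Propositions \ref{prop:balanced2} and \ref{pr:correspondence}, complete $\eta$ to a maximal ideal lamination, pass to multiplicative shearing coordinates on $\cT_{g,n}$, and observe that the $(\eta,\theta)$-balancing condition at each vertex of $\eta$ gives a codimension-$2$ polynomial constraint, yielding formal dimension $6g-6+2n-2n=6g-6$. Your write-up is more explicit than the paper's (which simply asserts the codimension-$2$ count), in particular in spelling out that the $\phi_i$ are Laurent monomials in the shears and in justifying why the complex balancing equation is genuinely two real conditions.
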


\begin{proof} As in the proof of Corollary \ref{cor:variety1}, Propositions \ref{prop:balanced2} and  \ref{pr:correspondence} give that $C_{\eta,\theta}$ is homeomorphic to $\cB_S^{\eta,\theta}$. We can similarly complete $\eta$ to a maximal ideal lamination $\lambda$ and define shearing coordinates on $T_{g,n}$. Then, $\cB_S^{\eta,\theta}$ is a real algebraic set in $T_{g,n}$ defined by the conditions that for every face $f$ in $\eta^*$ with $\partial f = [e_1, \ldots, e_k]$, the sequences $(\phi_1(v_f), \ldots, \phi_k(v_f))$ and $(\theta(e_1), \ldots, \theta(e_k))$ are the edge lengths and exterior angles of a Euclidean polygon, respectively. Since $\theta$ is fixed, these give co-dimension $2$ polynomial equations in the shearing coordinates at every vertex of $\eta$ and therefore $\cB_S^{\eta,\theta}$ has formal dimension $6g-6$.
\end{proof}

\begin{remark}
As for circle packings, it can be expected that $C_{\eta,\theta}$ is smooth of dimension $6g-6$ at least at points $(\sigma, \cD)$ where $\sigma$ is quasi-Fuchsian. Given such a point $(\sigma,\cD)$, one can again consider the manifold $M_1(\sigma,\cD) = (\bH^3 \smallsetminus \wt{E}(\sig \smallsetminus V_\cD))/\pi_1 S$. Gluing two copies of $M_1(\sigma,\cD)$ along their geodesic boundaries yields a complete hyperbolic cone-manifold $M_2(\sigma, \cD)$ with cone singularities of angle less than $2\pi$ and an ideal boundary composed of cusps and two surfaces of genus $g$. This time, $M_2(\sigma, \cD)$ admits an isometric involution. Moreover, any small deformation of $M_2(\sigma, \cD)$ invariant under this involution comes from a small deformation of $(\sigma,\cD)$. The smoothness of $C_{\eta,\theta}$ at $(\sigma,\cD)$ would therefore follow from a local rigidity statement for $M_2(\sigma, \cD)$. Rigidity statements of this type are known for ``convex co-compact'' hyperbolic cone-manifolds with cone singularities along bi-infinite geodesics, see \cite{qfmp}, but do not quite cover the case of $M_2(\sigma, \cD)$ with singularities along bi-infinite geodesics going between cusps.
\end{remark}

\section{Proof of Theorem \ref{tm:delaunay}}

\subsection{A uniform bound on the pleating lamination}\label{ssc:pleating}

In Definition \ref{def:ends}, we defined the map $\sig \mapsto \mu_\sig$ sending a complex projective structure $\sigma$ to the pleating measured lamination $\mu_\sig$ of $\partial_0 E(\sigma)$. Call this map $L:\cC\to \cML$. The following Lemma is an extension of \cite[Lemma 4.1]{KMT2} controlling the image of $L$.

\begin{lemma} \label{lem:lambda}
Let $(\eta, \theta)$ be an admissible pair for $S$. Then $L\circ \Pi_1(\cC_{\eta,\theta})$ is pre-compact in $\cML$.
\end{lemma}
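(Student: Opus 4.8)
The goal is to show that for an admissible pair $(\eta,\theta)$, as $(\sigma,\cD)$ ranges over $\cC_{\eta,\theta}$, the associated bending laminations $\mu_\sigma = L(\sigma)$ stay in a compact subset of $\cML$. Since $\cML$ is a cone, pre-compactness amounts to two things: (a) a \emph{uniform upper bound} on the total mass of $\mu_\sigma$ (so the laminations do not escape to infinity), and (b) ruling out that the mass of $\mu_\sigma$ escapes into a cusp or degenerates — but by Remark~\ref{rk:grafting} and the correspondence of Proposition~\ref{pr:correspondence}, $\mu_\sigma = \lambda_\cD$ is a \emph{filling ideal} measured lamination supported on the edges of $\eta$, hence its topological type (combinatorics) is fixed by $(\eta,\theta)$; so the only thing that can go wrong is the total weight. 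Thus the heart of the matter is (a): a bound, depending only on $(\eta,\theta)$, for the sum of the bending weights over the edges of the ideal polyhedron $P_\cD\subset E(\sigma)$.

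\textbf{Key steps.} First, I would translate everything through Proposition~\ref{pr:correspondence}: $\mu_\sigma$ is the measured lamination $\lambda_\cD$ on the concave pleated surface $\Sigma_\cD = \partial P_\cD$, whose underlying geodesic lamination is the $1$-skeleton of $\eta$ (realized geodesically in the hyperbolic metric $h_\cD$), with weight on an edge $e$ equal to the \emph{exterior dihedral angle} of $P_\cD$ along $e$, which is exactly $\theta_\cD(e)=\theta(e)$ pulled back under the cell-decomposition identification. Wait — that would already give a bound, but the dihedral \emph{bending} angle of the end along $e$ need not equal $\theta(e)$: $\theta(e)$ is the intersection angle of the circles, equivalently the exterior angle at the edge \emph{inside} the polyhedron, whereas the bending of the end $E(\sigma)$ along $e$ is related but measured on the concave side. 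The correct statement to extract is that the bending angle along each edge $e$ of $\partial_0 E(\sigma)$ is $\pi - \theta(e)$ (the supplement), since the pleated surface bends outward by the complement of the interior angle between the two adjacent faces. In any case, each edge weight is bounded above by $\pi$ and below by $0$, \emph{uniformly in $(\sigma,\cD)$}, because the faces of $P_\cD$ are honest ideal polygons and the angle data is literally the fixed function $\theta$. Hence
$$
i(\mu_\sigma,\cdot) = \sum_{e\in\cE(\eta)} (\pi-\theta(e^*))\,\delta_e,
$$
a \emph{fixed} measured lamination independent of $(\sigma,\cD)$ — so $L\circ\Pi_1(\cC_{\eta,\theta})$ is actually a single point, which is certainly pre-compact.

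\textbf{Where the subtlety really lies, and the fallback.} The above is almost too clean; the point where I expect to need genuine care is making sure that the measured lamination $\mu_\sigma$ on $\partial_0 E(\sigma)$ is \emph{equal} to $\lambda_\cD$ and not merely related to it. A priori the hyperbolic end $E(\sigma)$ has a pleated boundary $\partial_0 E(\sigma)$ with \emph{its own} bending lamination $\mu_\sigma$, while $P_\cD=\hull_E(V_\cD)$ is carved out of $E(\sigma)$ and has boundary $\partial P_\cD = \partial_0 E(\sigma\smallsetminus V_\cD)$, a \emph{different} pleated surface sitting deeper inside. These coincide when the ideal polyhedron ``exhausts'' the end, but in general $\partial P_\cD$ is pleated along $\lambda_\cD$ with weights in $(0,\pi)$ coming from $\theta$, while $\mu_\sigma$ is the lamination along which $\sigma$ grafts down to its Fuchsian-type metric and can have much larger weights. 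So what is genuinely uniformly bounded is $\lambda_\cD$ (trivially, it is the fixed function $\pi-\theta$), and the real content of the Lemma must be: \emph{control $\mu_\sigma$ by $\lambda_\cD$}. The plan for that is to compare the two pleated surfaces $\partial_0 E(\sigma)$ (outermost) and $\partial P_\cD$ (innermost): both face the same complete end, and a nesting/monotonicity argument — in the spirit of \cite[Lemma~4.1]{KMT2}, using that the complete end between them is foliated by equidistant convex surfaces and that bending can only decrease as one moves outward toward $\partial_\infty E(\sigma)$ — should yield $i(\mu_\sigma,\alpha) \le i(\lambda_\cD,\alpha)$ for every transverse arc $\alpha$, hence $i(\mu_\sigma,\cdot) \le \sum_e (\pi-\theta(e^*))\delta_e$, a fixed finite bound. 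Combined with the fact (again from Proposition~\ref{pr:correspondence} and Remark~\ref{rk:grafting}) that the support of $\mu_\sigma$ cannot run into the cusps — there are no cusps, $S$ is closed, and $\mu_\sigma\in\cML_{g,0}=\cML_{g,0}^o$ — the uniform mass bound forces pre-compactness in $\cML$. The main obstacle, then, is establishing the monotonicity comparison $i(\mu_\sigma,\cdot)\le i(\lambda_\cD,\cdot)$ rigorously; everything else is bookkeeping via the correspondences already set up.
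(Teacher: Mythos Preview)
Your overall instinct --- use the ideal polyhedron $P_\cD$, whose dihedral angles are the fixed values $\theta(e)\in(0,\pi)$, to bound the bending $\mu_\sigma$ of the convex–core boundary $\partial_0 E(\sigma)$ --- is exactly the right one, and it is what the paper does. But the crux you isolate, the inequality $i(\mu_\sigma,\alpha)\le i(\lambda_\cD,\alpha)$ ``by monotonicity of bending under nesting'', is not a known principle and does not follow from the foliation by equidistant surfaces you invoke. Two concrete obstructions: (i) $\mu_\sigma$ lives on the closed surface $\partial_0 E(\sigma)$ while $\lambda_\cD$ lives on the punctured surface $\partial P_\cD$, so comparing intersection numbers ``for every transverse arc $\alpha$'' needs a map between them that respects bending, not just length; (ii) the Roof Lemma of Bridgeman--Canary, which is the actual tool here, requires that the geodesic arc on $\partial_0 E$ have its endpoints \emph{on} the support planes. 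The face half-spaces $B_D$ of $P_\cD$ are support half-spaces of $\partial_0 E$, but their bounding planes need not touch $\partial_0 E$ anywhere, so you cannot plug them straight into the Roof Lemma. (Minor aside: by Proposition~\ref{pr:correspondence} the weight of $\lambda_\cD$ on an edge is $\theta(e)$, not $\pi-\theta(e)$.)

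The paper closes this gap with an explicit construction. For each disk $D\in\wt\cD$ one finds a point $v_D\in\mathrm{int}(D)$ whose \emph{support half-space} $F_{v_D}$ --- the half-space tangent to $\partial_0\wt E$ at the nearest-point retraction $\wt r(v_D)$ --- actually contains $B_D$; this uses a short argument about the closest point of $\partial B_D$ to $\partial_0\wt E$. The containment $B_D\subset F_{v_D}$ guarantees that whenever $D$ and $D'$ overlap (i.e.\ are adjacent in $\eta^*$), the half-spaces $F_{v_D}$ and $F_{v_{D'}}$ intersect. Now take a filling non-backtracking edge-path $\gamma$ in $\eta^*$ with $N$ edges; realize its vertices at the $v_D$'s and project to the piecewise geodesic $\wt r(\wt\gamma)$ on $\partial_0\wt E$. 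By construction this path passes through the tangency points of the $F_{v_D}$'s, so the Roof Lemma applies and yields $i(\gamma,\mu_\sigma)<N\pi$. Since $N$ depends only on $\eta$, and a uniform bound on $i(\gamma,\mu_\sigma)$ for one filling curve is the standard criterion for pre-compactness in $\cML$ (Bonahon), the lemma follows. Note the bound is the coarse $N\pi$, not $\sum_i\theta(e_i)$; the dihedral angles $\theta$ never enter the estimate directly.
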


\begin{proof}
To show that $L\circ \Pi_1(\cC_{\eta,\theta})$ is pre-compact in $\cML$ it is enough to show that there exists a filling closed curve $\gamma$ and a constant $K$ such that for any $\sigma \in \cC_{\eta,\theta}$, $i\left(\gamma, \mu_\sigma\right) < K$, see \cite[Prop. 4]{bonahon_geometry}. Fix any non-backtracking filling closed curve $\gamma$ running along the edges of $\eta^*$ and let $N$ be the number of edges of $\eta^*$ that appear in $\gamma$.

Fix $\sig \in \cC_{\eta,\theta}$ and let $E = E(\sig)$. Working in the universal cover, define the {\it retraction map} $\wt{r} : \wt{\sig}  \to \partial_0 \wt{E}$ as follows. For every $p \in  \wt{\sig}$, let $\wt{r}(p)$ be the tangency point of the maximal horoball $H_p \subset \wt{E}$ centered at $p$ to $\partial_0 \wt{E}$. By the concavity of $\partial_0 \wt{E}$, $\wt{r}$ is well defined.

For every point $p \in \wt{\sigma}$, there is a {\it support half-space} $F_p \subset \wt{E}$ containing $H_p$ which is cut off by the hyperplane tangent to $H_p$ at $\wt{r}(p)$. Notice that $F_p$ is M\"obius equivalent of a half-space of $\bH^3$ and $\partial_\infty F_p$ is a round disk in $\wt{\sigma}$. Further, the assignment $p \mapsto F_p$ is continuous. For details, see \cite{epstein-marden}.

Let $\cD$ be the Delaunay circle pattern realized on $\sigma$ corresponding to $\eta$ and $\theta$. We will realize $\wt{\eta}^*$ on $\wt{\sig}$ by first choosing the location of vertices.

{\it Claim:} For every $D \in \wt{\cD}$ there is a point $v_D \in \mathrm{int}(D) \subset \wt{\sig}$ such that $D \subset \partial_\infty F_{v_D}$.

{\it Proof of Claim:} Let $B_D \subset \wt{E}$ be the half-space defined by $D$. Pick a point $p \in \partial B_D$ minimizing the distance from $\partial B_D$ to $\partial_0 \wt{E}$ and let $\al$ be the geodesic segment realizing this distance. Let $q \in \partial_0 \wt{E}$ be the other endpoint of $\al$ and let $F_q$ be the half-space perpendicular to $\al$ at $q$. Since $\partial_0 \wt{E}$ is concave, we can guarantee that $F_q$ is an embedded half-space in $\wt{E} \cup \partial_0 \wt{E}$, giving us that $B_D \subset F_q$. Exponentiating $\al$ gives an ideal point $v_D$. Because $B_D \subset F_q$ and $\al$ is perpendicular to both, an expanding horoball at $v_D$ will first become tangent to $\partial_0 \wt{E}$ at $q$, implying that $F_{v_D} = F_q$ and  $D \subset \partial_\infty F_{v_D}$.\qed

We can now equivariantly realize each vertex of $\wt{\eta}^*$ by choosing the point $v_D$ for the corresponding disk. The goal of this construction is to ensure that if the vertices $u,v$ are joined by an edge in $\wt{\eta}^*$, then $F_u$ and $F_v$ intersect. Indeed, this follows because $D_u$ and $D_v$ overlap, $D_v \subset \partial_\infty F_v$, and $D_u \subset \partial_\infty F_u$.

To realize an edge of $\wt{\eta}^*$, take the two endpoints $u, v$ and consider the geodesic $\al_{u,v}$ on $\partial_0 \wt{E}$ between $\wt{r}(u)$ and $\wt{r}(v)$. Take the edge to be $\wt{r}^{-1}(\al_{u,v})$. Note that if $\al_{u,v}$ runs along a leaf of $\mu_\sig$, we can slightly move $u$ and $v$ to make $\al_{u,v}$ transverse, while keeping the intersection between $F_v$ and $F_u$.

Returning to our path $\gamma$, we want to show that $i\left(\gamma, \mu_\sigma\right) < K$, for a constant $K$ independent of $\sig$. Consider a lift $\wt{\gamma}$ as an edge path in $\wt{\eta}^*$ realized on $\wt{\sig}$. The sequence of vertices gives a sequence $F_i$ of support half-spaces forming {\em roofs} over $\wt{r}(\wt{\gamma})$, see Figure \ref{fig:roof_simple}. Since $\wt{r}(\wt{\gamma})$ is a piecewise geodesic on $\partial_0 \wt{E}$ transverse to $\mu_\sigma$, the Roof Lemma of Bridgeman-Canary \cite[Lemma 4.1]{bridgeman-canary} applies and tells us that the total bending $i\left(\gamma, \mu_\sigma\right)$ is less than $N \pi$.

\begin{figure}[htb]
\begin{center}
\begin{overpic}[scale=.7]{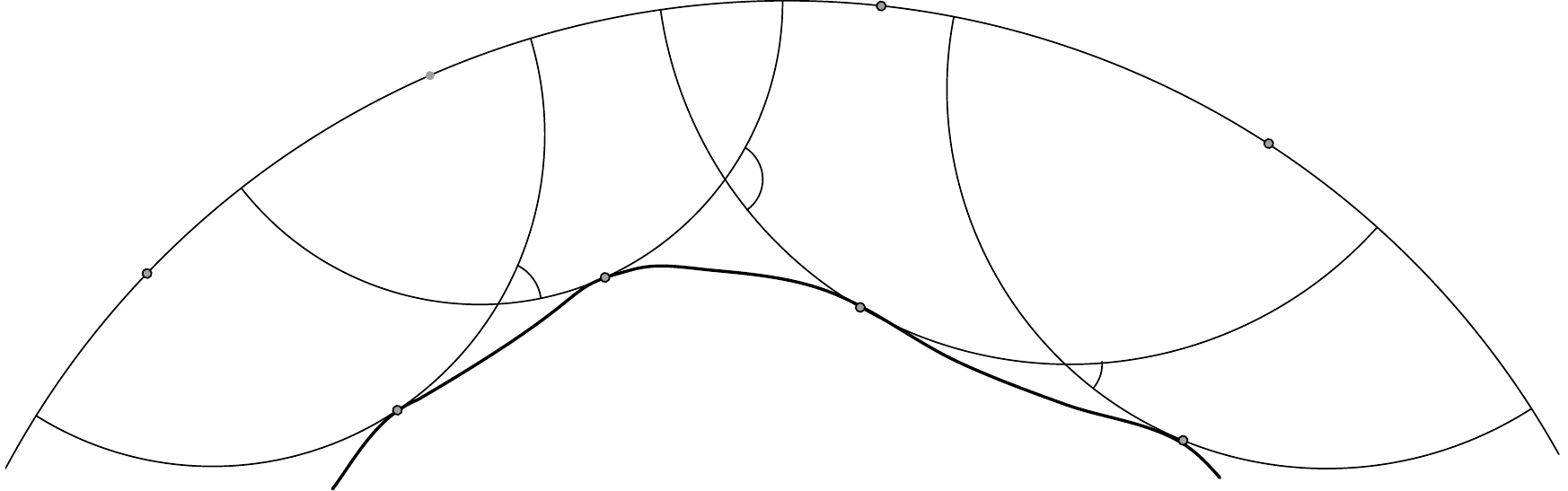}
\put(6.5,15.5){$v_0$}
\put(25,28){$v_1$}
\put(57,32){$v_2$}
\put(81,23){$v_3$}
\put(25,2){$\wt{r}(v_0)$}
\put(38,10.5){$\wt{r}(v_1)$}
\put(51,8.5){$\wt{r}(v_2)$}
\put(70,0.5){$\wt{r}(v_3)$}
\put(13,3){$F_0$}
\put(23,15){$F_1$}
\put(78,13){$F_2$}
\put(89,4){$F_3$}
\put(34.2,14){$\theta_1$}
\put(49.5,19){$\theta_2$}
\put(71,6){$\theta_3$}
\end{overpic}
\end{center}
\caption{The half-spaces form a sequence of {\em roofs} over $\wt{r}(\wt{\gamma})$. The Roof Lemma tells us that the bending measure of the arc between $\wt{r}(v_0)$ and $\wt{r}(v_3)$ is less than $\sum_i \theta_i < 3 \pi$.}
\label{fig:roof_simple}
\end{figure}

Since $\sig \in \bC_{\eta, \theta}$ was arbitrary and $K = N \pi$ only depends on the combinatorics of $\eta$, we can conclude that $L\circ \Pi_1(\cC_{\eta,\theta})$ is pre-compact in $\cML$.\end{proof}

\subsection{Bounds on the induced metrics}

Given $(\sigma, \cD)\in \cC_{\eta,\theta}$, let $\Sigma_{\cD}\subset E(\sigma)$ be the corresponding polyhedral surface. A closed geodesic on $\Sigma_{\cD}$ is a closed geodesic on the corresponding balanced metric in $\cB_S^{\eta,\theta}$. In this section, we show that the lengths of these geodesics can be bounded below in terms of $(\eta,\theta)$.

\begin{prop} \label{prop:contractible}
Let $(\eta, \theta)$ be an admissible pair for $S$. There is a constant $L_0>0$ depending only on $(\eta, \theta)$ such that if $(\sigma,\cD)\in \cC_{\eta,\theta}$ and $\gamma$ is a closed geodesic on $\Sigma_{\cD}$ which is contractible in $S$, then the length of $\gamma$ is at least $L_0$.
\end{prop}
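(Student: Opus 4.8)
The plan is to derive a lower bound on the length of a contractible closed geodesic $\gamma$ on $\Sigma_\cD$ from the purely combinatorial structure of the cell decomposition $\eta$ (equivalently its dual $\eta^*$), using the fact that each face of $P_\cD$ is an ideal hyperbolic polygon whose shape is, up to finitely many parameters, controlled by the cusp data. The starting point is that $\gamma$, being a geodesic on the concave pleated surface $\Sigma_\cD = \partial P_\cD$, crosses a cyclic sequence of faces $f_1,\ldots, f_k$ of the polyhedron $P_\cD$; since $\gamma$ is non-trivial the integer $k$ is at least, say, $2$ or $3$, and the combinatorial type of the loop $[e_1,\ldots,e_k]$ in $\eta^*$ that records which faces are crossed (together with which edges) is one of finitely many possibilities once we fix that $\gamma$ is contractible and has bounded "combinatorial length". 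The key point is therefore to show: (a) a contractible geodesic of bounded length can only realize finitely many combinatorial types relative to $\eta$, and (b) for each fixed combinatorial type there is a uniform positive lower bound on the geometric length.

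For part (b) I would argue as follows. Cutting $P_\cD$ open along the edges crossed by (a lift of) $\gamma$ produces a finite chain $T_\gamma = f_1 \cup \cdots \cup f_k$ of ideal hyperbolic polygons glued edge-to-edge, which carries a hyperbolic metric, exactly as in the proof of Lemma \ref{lem:angles}; since $\gamma$ is contractible this chain can be developed into $\bH^2$ (the relevant holonomy around $\gamma$ is trivial in $S$, and after developing the pleated picture into $\bH^3$ the geodesic $\gamma$ develops to a closed polygonal path, hence to a bona fide closed geodesic only if its length is controlled). More directly: each face $f_i$ is an ideal polygon with a marked horocyclic arc at each cusp (the cusp angles $\phi_j$), and the $(\eta,\theta)$-balanced condition together with the pre-compactness of the pleating lamination from Lemma \ref{lem:lambda} pins down the shapes of these ideal polygons up to compact families — so there is a uniform upper bound on how "thin" any of the polygons $f_i$ can be, hence a uniform lower bound on the width the geodesic $\gamma$ must traverse each time it crosses a face. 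Summing over the $k\geq 2$ faces crossed gives $\mathrm{length}(\gamma)\geq L_0$ with $L_0$ depending only on $(\eta,\theta)$ (through the bound on the shapes and on $k$).

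In more concrete terms, I would use multiplicative shearing coordinates on $\cT_{g,n}$ relative to a maximal ideal lamination $\lambda \supset \eta_1$, as in the proof of Corollary \ref{cor:variety1}. The balanced metric $h_\cD \in \cB_S^{\eta,\theta}$ is constrained by the balancing equations, and a standard Mumford-type compactness argument combined with Lemma \ref{lem:lambda} (the pleating lamination stays in a compact set of $\cML$, hence the grafting data, hence the induced metric, does not degenerate in the directions that matter) shows that the possible metrics $h_\cD$, when restricted to any fixed bounded sub-configuration of faces, vary in a compact family; on a compact family the length of a geodesic of a given combinatorial type is a continuous positive function, hence bounded below. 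The genuinely delicate point I expect to be the main obstacle is controlling the interaction between two phenomena: a contractible geodesic $\gamma$ could a priori wind many times around a short non-separating curve while still being contractible in $S$ (because the surface $\Sigma_\cD$ is only immersed in $E(\sigma)$, not embedded), so I must rule out "bounded length but unbounded combinatorial complexity". This is where I would invoke that $\gamma$ is contractible \emph{in $S$}: a lift of $\gamma$ to $\wt\Sigma_\cD \subset \wt{E}(\sigma)$ is a closed curve, and combining the concavity of $\partial_0\wt E$ with the roof/support–half-space technology of Section \ref{ssc:pleating} (Bridgeman–Canary's Roof Lemma and the maximality of the Delaunay disks) forces a contractible loop to develop to a closed convex polygonal path in $\bH^3$, whose total turning is $2\pi$; the number of faces it can cross is then bounded by the reciprocal of the minimal face-angle contribution, which is again controlled by $(\eta,\theta)$. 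Once both (a) and (b) are in hand, taking $L_0$ to be the minimum over the finitely many combinatorial types of the uniform lower bounds finishes the proof.
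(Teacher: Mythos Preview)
Your plan has a genuine gap at its core. You invoke Lemma~\ref{lem:lambda} together with the balancing equations to conclude that ``the shapes of these ideal polygons are controlled up to compact families,'' but Lemma~\ref{lem:lambda} bounds the pleating lamination $\mu_\sigma$ of $\partial_0 E(\sigma)$ on the \emph{closed} surface $S$, not the induced metric $h_\cD$ on the \emph{punctured} polyhedral surface $\Sigma_\cD$. These are different objects, and no compactness of the face shapes follows. Indeed, Proposition~\ref{prop:contractible} has no hypothesis on $f(\sigma)$ at all --- the bound $L_0$ must hold uniformly over all of $\cC_{\eta,\theta}$, so the grafting/Scannell--Wolf compactness you gesture at (which the paper does use, but only in Proposition~\ref{prop:non-contractible} under the extra hypothesis $f(\sigma)\in K$) is simply unavailable here. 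The balanced metrics $h_\cD\in\cB_S^{\eta,\theta}$ genuinely can degenerate, with individual faces becoming arbitrarily thin; ruling out short contractible geodesics is precisely one of the degenerations you must exclude by hand, not something you get for free from compactness. Your part~(a) is also problematic: a closed polygonal path in $\bH^3$ has no ``total turning $2\pi$'' constraint, and in fact a contractible geodesic on $\Sigma_\cD$ can cross arbitrarily many faces, so there is no a~priori bound on combinatorial type.

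The paper's argument is quite different and does not pass through any compactness of face shapes. It first shows (Claim~1) that when $L$ is small the pleated cylinder $T$ carrying $\gamma$ embeds isometrically in $\bH^3$, via a delicate visual-angle estimate. It then works (Claim~2) in the de~Sitter parameter space of half-space configurations: as $L\to 0$ the pleating edges of $T$ become pairwise coplanar, hence (by projective duality) share a common perpendicular plane $Q$; the cross-section $Q\cap T$ is a convex hyperbolic polygon with exterior angles $\theta_i$ and perimeter $\le L$. Gauss--Bonnet forces its area to be $\sum_i\theta_i - 2\pi\ge\delta_{\min}>0$, where $\delta_{\min}$ comes from condition~(2) of Lemma~\ref{lem:angles} --- this is the crucial ingredient you never invoke --- while the isoperimetric inequality forces the area to $0$ as $L\to 0$, giving the contradiction.
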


\begin{proof} As the geodesics of interest are contractible in $S$, we can always lift them to loops in $\wt{E} = \wt{E}(\sig)$. Fix a lift $\Sigma_{\wt{\cD}}$ of $\Sigma_{\cD}$ in $\wt{E}$ and consider a closed geodesic $\gamma \subset \Sigma_{\wt{\cD}}$ of length $L > 0$. Note that $\gamma$ is a piecewise geodesic in $\wt{E}$. We will show that $L$ cannot be smaller than some constant $L_0>0$ depending only on $(\eta, \theta)$. Since we are giving a lower bound on length, it is safe to assume that $\gamma$ is simple.

Let $\{F_i\}_{i = 1}^n$ be the cycle of support half-spaces to $\Sigma_{\wt{\cD}}$ along the geodesic arcs of $\gamma$ and let $e_i = \partial_0 F_i \cap \partial_0 F_{i+1}$ be the pleating lines along $\gamma$. The convex hull of $e_i \cup e_{i+1}$ for each sequential pair is an ideal triangle or quadrilateral in $\partial_0 F_i$. Let $T \subset \Sigma_{\wt{\cD}}$ be the hyperbolic cylinder obtained by cyclically gluing these ideal triangles and quadrilaterals along the $\{e_i\}$. See Figure \ref{fig:T}. Intrinsically, $T$ has cusped, totally geodesic boundary and embeds in $\wt{E}$ by pleating along $\{e_i\}$. Since $\gamma$ is the core geodesic of $T$ and has non-zero length, the edges $\{e_i\}$ do not all share an ideal point. Our proof will proceed in two steps: $(i)$ when $L$ is small, $\left.\dev_{\wt{E}}\right|_T$ is an embedding $(ii)$ such an embedding can only exist if $L > L_0$.

\begin{figure}[htb]
\begin{center}
\begin{overpic}[scale=.7]{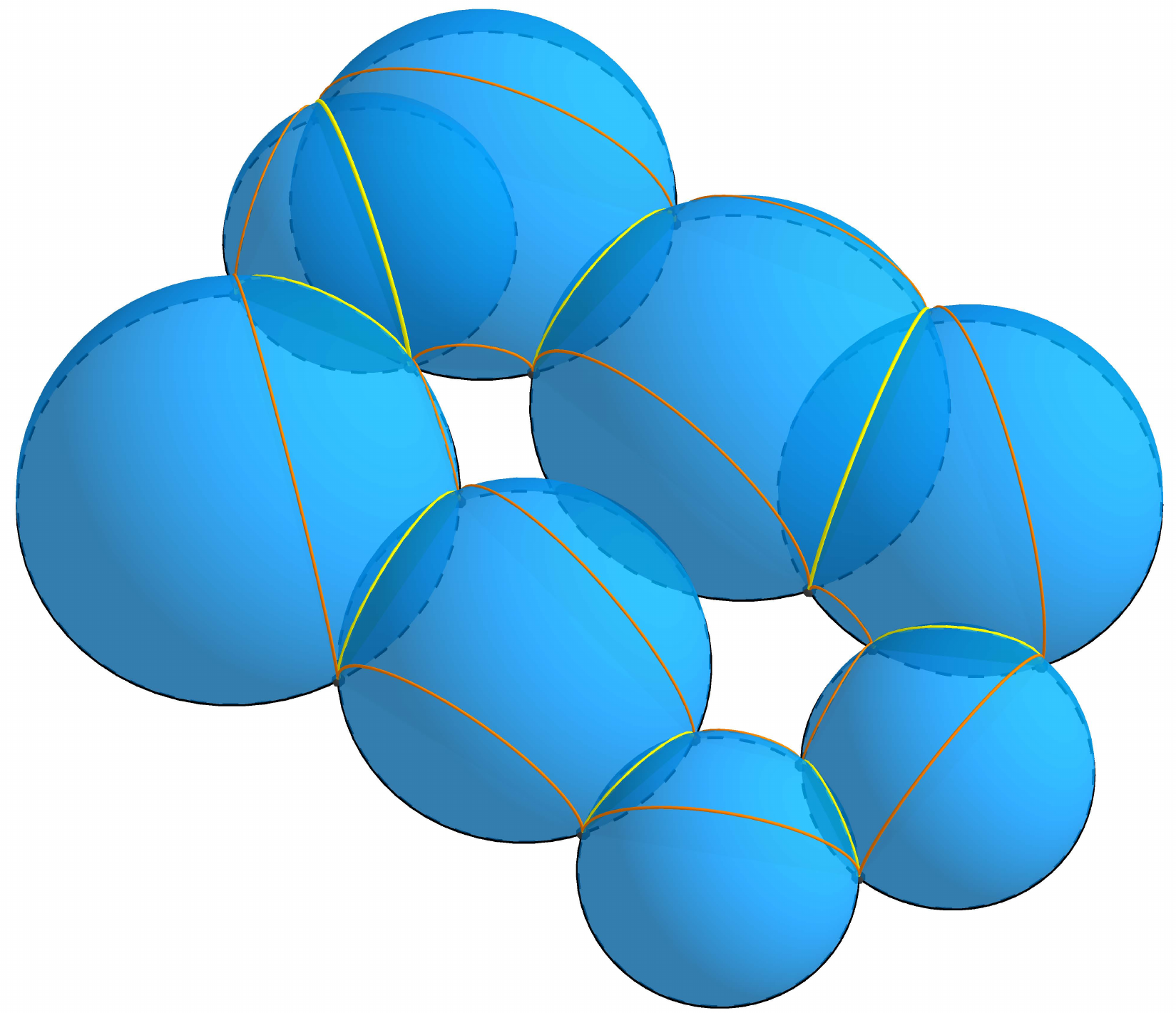}
\end{overpic}
\end{center}
\caption{Image of $\{F_i\}$ and $T$ in the upper half-space model of $\bH^3$. The yellow geodesics are the pleating lines along $\gamma$ and the orange geodesics form $\partial T$. The length $L$ of $\gamma$ is large in this picture, but keep in mind $F_i$ and $F_j$ may intersect for $|i - j| > 1$ even if $L$ is small.}
\label{fig:T}
\end{figure}

{\it Claim 1:} When $L$ is small, $\left.\dev_{\wt{E}}\right|_T$ is an embedding.

{\it Proof of Claim 1:} To prove this Claim, we will find an isometrically embedded hyperbolic ball $B_\gamma \subset \wt{E}$ such that $\gamma \subset B_\gamma$ when $L$ is small enough. Since balls are convex, $\left.\dev_{\wt{E}}\right|_{B_\gamma}$ will be an embedding and, in particular the surface $B_\gamma \cap T$ will embed in $\bH^3$. Since the $\{e_i\}$ are disjoint in $\wt{E}$, the images of the segments $e_i \cap B_\gamma$ will exponentiate to disjoint geodesics in $\bH^3$. The embedding of the pleating lines extends to that of $T$. It remains to find this ball $B_\gamma$.

Notice that a ball $B_{\wt{E}}(x,r)$ around $x \in \wt{E}$ of radius $r$ is isometric to a hyperbolic ball of radius $r$ if and only if the distance from $x$ to $\partial_0 \wt{E}$ to be greater than $r$. Pick a point $x \in \gamma$ and define $B_\gamma = B_{\wt{E}}(x, L)$. It suffices to prove that there exists an $L$ such that the distance from $x$ to $\partial_0 \wt{E}$ is greater than $L$.

We will use the visual perspective from $x \in \gamma$. Let $\bS_x$ be the unit tangent sphere at $x$ and define the visual projection $\pi_x : \wt{\sig} \to \bS_x$ as follows. For $z \in \wt{\sig}$, there is a unique ray $\rho_z : [0, \infty] \to \wt{E} \cup \partial_0 \wt{E}$ from $x = \rho(0)$ to $z = \rho_z (\infty)$ such that $\rho_z ((0,t))$ is the shortest path from $x$ to $\rho_z (t)$ for all $t$. We define $\pi_x(z)$ to be the unit vector tangent to $\rho_z$ at $x$. Notice that $\rho_z$ can either be totally geodesic or it can run along $\partial_0 \wt{E}$. Define the ``visible set'' $V_x  = \{ z \in \wt{\sig} \mid \rho_z \text{ is totally geodesic}\}$. Notice that since $\partial_0 \wt{E}$ is concave, $V_x$ is a closed topological disk.

To get a flavor for the structure of $V_x$, we will show that $\bS_x \smallsetminus \pi_x(V_x)$ is convex in the spherical metric. Note that for $z \in \partial V_x$, either $\rho_z$ tangentially meets $\partial_0 \wt{E}$ at some point $w$ or $z$ lies in the limit set of $\wt{\sigma}$. In the former case, there is a pleating line of $\partial_0 \wt{E}$ through $w$, which along with $\rho_z$ spans a plane that cuts off a half-space $F$ in $\wt{E}$. By convexity, $F$ is embedded and $\partial_\infty F \subset V_x$. It follows that the projection $\pi_x(\partial_\infty F)$ is a hemisphere avoiding and tangent to $\bS_x \smallsetminus \pi_x(V_x)$ at $\pi_x(z)$. If $\rho$ does not intersect $\partial_0 \wt{E}$, then $z$ is on the limit set of $\wt{\sig}$ and there is some support half-space of $\partial_0 \wt{E}$ tangent at $z$ that must contain $\rho_z$. Inside, we can take a smaller half-space $F$ with $\rho_z \subset \partial_0 F$ and proceed as before. This proves that $\bS_x \smallsetminus \pi_x(V_x)$ is convex.

Let $C = \bigcup_i F_i$ and notice that $\partial_\infty C \subset \wt{\sig}$ is a multiply-connected domain. Let $\partial_\infty C =  R_0 \smallsetminus \bigcup_j \mathrm{int}(R_j)$ for closed topological disks $R_j \subset R_0 \subset \wt{\sigma}$. If $R_0 \subset V_x$, then we can embed $C$, and therefore $T$, in $\bH^3$ by identifying $\bS_x$ with the visual sphere of any point in $\bH^3$ and exponentiating the set of vectors $\pi_x(R_0 \cap V_x)$. Thus, we may assume that $R_0 \not \subset V_x$.

Let $r = d_{\wt{E}}(x,\partial_0 \wt{E})$ and let $y \in \partial_0 \wt{E}$ realize this distance. Consider the closest complete line $\ell$ of $\partial_0 \wt{E}$ to $y$. Notice that $\ell$ does not necessarily need to be a pleating line. Since the totally geodesic components of $\partial_0 \wt{E}$ are always ideal hyperbolic polygons, the worst case scenario is that $y$ lies in the center of an ideal triangle. Therefore, the distance from $\ell$ to $y$ is bounded above by $\log(3/2)$. By convexity, the endpoints $\{p,q\} = \partial_\infty \ell$ lie in $V_x$ and $\{x,p,q\}$ defines an embedded totally geodesic hyperbolic triangle in $\wt{E}$. At $x$, this triangle has a non-zero angle $\phi$, which is equal to the spherical distance between $\pi_x(p)$ and $\pi_x(q)$. Classical hyperbolic geometry tell us that the distance from $x$ to $\ell$ is $\mathrm{arccosh}(1/\sin(\phi/2))$, see \cite{beardon}. Let $y' \in \ell$ be the closest point of $\ell$ to $y$, then $\{x,y,y'\}$ is either a right-angled hyperbolic triangle or $y = y'$. It follows that $\cosh\left(d_{\wt{E}}(x, y')\right) \leq \cosh(r) \cosh(\log(3/2)) = 13 \cosh(r)/12$. In particular, $r$ is bounded below by $\mathrm{arccosh}\left(12/(13 \sin(\phi/2))\right)$, which implies that $r \to \infty$ as $\phi \to 0$. We will show that $\phi \to 0$ as $L \to 0$, guaranteeing that there is some $L$ for which $r > L$.

Since $\ell$ is a pleating geodesic of $\partial_0 \wt{E}$, the endpoints $\{p,q\}$ lie on the limit set of $\wt{\sig}$, and therefore $p,q \notin \mathrm{int}(R_0)$. It follows that $p,q \in \partial V_x \smallsetminus \mathrm{int}(R_0)$ and it will be sufficient to show that the (spherical) diameter of $\pi_x(V_x \smallsetminus \mathrm{int}(R_0))$ goes to $0$ as $L \to 0$.

Notice that $\pi_x(V_x \smallsetminus \mathrm{int}(R_0))$ is topologically a union of closed disks $\{D_i\}$ where the boundary of each disk is composed of two arcs: one from $\partial R_0$ and the other from $\partial V_x$. Let $\partial D_i = \al_i \cup \beta_i$ with $\al_i \subset \pi_x(\partial R_0)$ and $\beta_i \subset \pi_x(\partial V_x)$. Since $\bS_x \smallsetminus \pi_x(V_x)$ is convex, each $D_i  \subset V_x $ will be contained in the (spherical) convex hull of $\al_i$ when $\diam \al_i < \pi/2$. Thus, we must show that $\diam \bigcup_i \al_i$ goes to $0$ as $L \to 0$.

For $z_1 \in \al_i$ and $z_2 \in a_j$, let $\delta$ be an arc along $\partial R_0$ connecting $z_1$ to $z_2$. Let $\{w_k\}_{k = 1}^m$ be the endpoints of the pleating lines of $C$ along $\delta$ and set $w_0 = z_1$, $w_{m+1} = z_2$. We have that $$d_{\bS_x}(\pi_x(z_1), \pi_x(z_2)) \leq \sum_{k =  0}^m d_{\bS_x}(\pi_x(w_k), \pi_x(w_{k+1})).$$
Further, this holds for all $x \in \gamma$. Fix $k$ and let $x$ vary along $\gamma$. Then, $d_k = d_{\bS_x}(\pi_x(w_k), \pi_x(w_{k+1}))$ is maximized when $x$ is closest to the complete geodesic connecting $w_k$ to $w_{k+1}$. Call this point $x_k \in \gamma$. Notice that $x_k$ must lie on the flat of $\partial_0 C$ between the two pleating geodesics terminating at the points $w_k$ and $w_{k+1}$, respectively.  This gives a totally geodesic triangle with vertices $\{x_k, w_k, w_{k+1}\}$ embedded in $\partial_0 C$. Classical hyperbolic geometry in the plane gives us a bound $d_k \leq 2 \arccos(\sech(L/2))$ and therefore $d_{\bS_x}(\pi_x(z_1), \pi_x(z_2)) \leq 2 (N+1) \arccos(\sech(L/2))$, where $N$ is the number of edges in $\eta$. Taking the supremum over all $z_1 \in \al_i$, $z_2 \in a_j$ and $i,j$ shows that $\diam \bigcup_i \al_i \leq 2 (N+1) \arccos(\sech(L/2))$, as desired.\qed

Now that we have reduced the problem to $\bH^3$, we can restate our task in terms of arrangements of half-spaces. Let $\mathcal{F}$ denote the space of half-spaces in $\bH^3$. Recall that  $\mathcal{F}$ carries a Lorentzian metric obtained by realizing $\mathcal{F}$ as the de Sitter space for the hyperboloid model of $\bH^3$. A key property of this metric is that two half-space $F_a, F_b \in  \mathcal{F}$ are joined by a space-like geodesic of length $\psi$ if and only if they intersect in $\bH^3$ and have $\psi$ as the angle between their outer normals in $\bH^3$. See \cite{Kulkarni:1994ik} for details.

Let $\hat{\theta}_{\gamma} = (\theta_1, \ldots, \theta_n)$ denote the sequence of pleating angles on $T$. Notice that $T$ defines a space-like polygonal loop in $\mathcal{F}$ with vertices $F_1, \ldots, F_n$ and edges of length $\theta_i$. Our approach will be to work with the parameter space of all space-like loops that could arise from $T$.

Given a space-like polygonal loop $P = \{P_i\} \in \mathcal{F}^n$, we can define the pleating edges $e_i(P) = \partial_0 P_i \cap \partial_0 P_{i+1}$ and the corresponding pleating angles $\theta_i(P)$. We say that $P$ is {\em simple} if (1) the $e_i$ are pairwise disjoint, (2) the $e_i$ do not {\em all} share an ideal point, and (3) the corresponding pleated cylinder $T_P$ is embedded. Recall that $T_P$ is built by cyclically gluing the ideal triangles or quadrilaterals given as convex hulls of $e_i \cup e_{i+1}$ in $\partial_0 P_i$. We use $\gamma_P$ to denote the core geodesic of $T_P$.

Let $\mathcal{A}_{\hat{\theta}_{\gamma}}$ denote the space of all simple space-like polygonal loops in $\mathcal{F}$ with pleating angles given by $\hat{\theta}_{\gamma}$. Then, let $\mathcal{A}_{\hat{\theta}_{\gamma}}(L)$ be the set of all $P \in \mathcal{A}_{\hat{\theta}_{\gamma}}$ where the hyperbolic length of $\gamma_P$ is less than $L$. Our goal is to show the following.

{\it Claim 2:} There exists $L_0> 0$ depending only $(\eta,\theta)$ such that $\mathcal{A}_{\hat{\theta}_{\gamma}}(L_0) = \emptyset$ for all $\gamma$.

{\it Proof of Claim 2:} Our first observation is that $\mathcal{A}_{\hat{\theta}_{\gamma}}$ has no isolated points as we can perturb any simple space-like polygonal loop while keeping the angles the same. Because the $\gamma_P$ varies continuously with $P$, $\mathcal{A}_{\hat{\theta}_{\gamma}}(L)$ is an open subset and also has no isolated points. Let $0 < \theta_{\min}, \theta_{\max} < \pi$ be the maximum and minimum angles attained by $\theta_\cD: \cE(\eta) \to (0, \pi)$, respectively. Define the {\em coplanar locus} $\mathcal{AP}(L,\delta) \subset \mathcal{F}^n$ by $P =  \{P_i\}  \in \mathcal{AP}(L,\delta)$ if and only if 
\begin{itemize}
\item $P$ is a simple space-like polygonal loop in $\mathcal{F}$.
\item $e_i(P)$ and $e_j(P)$ are coplanar for all $i,j$.
\item $\gamma_P$ has length $\leq L$.
\item $\theta_{\min} \leq \theta_i(P) \leq \theta_{\max}$ for all $i$.
\item $ 2\pi + \delta \leq \sum_{i = 1}^{n-1} \theta_i(P)$.
\end{itemize}

Notice that we do not fix the angles in the definition of $\mathcal{AP}(L,\delta)$. We will now show that for every $\eps > 0$ there is an $L_\eps > 0$ such that $\mathcal{A}_{\hat{\theta}_{\gamma}}(L_\eps)$ lies in an $\eps$ neighborhood of $\mathcal{AP}(L_\eps, 0)$. Notice that $\mathcal{AP}(L, 0) \neq \emptyset$ for all $L$, so taking a neighborhood makes sense.

Fix $P = \{P_i\}  \in \mathcal{A}_{\hat{\theta}_{\gamma}}(L)$ and pick edges $e_i = e_i(P)$, $e_j = e_j(P)$ with $i \neq j$.  Since the pairs $e_i$ and $e_{i+1}$ are already coplanar for all $i$, we work with $|i - j| > 1$. We can assume that $e_i$ and $e_j$ share no endpoints, as otherwise they are coplanar. Let $X_{i,j}$ be the ideal tetrahedron obtained as the convex hull of $e_i \cup e_j$ in $\bH^3$. The fact that $P$ is simple and convexity imply that $X_{i,j} \subset \bH^3 \smallsetminus \bigcup_i P_i$. In particular, $\gamma_P$ is longer than the isotopic ``core'' curve $\gamma_{i,j}$ around $X_{i,j}$. Notice that $\gamma_{i,j}$ intersects two pairs of opposite edges of $X_{i,j}$. One pair being $\{e_i, e_j\}$ and the other we will call $\{e_i', e_j'\}$. Let $l_{i,j}$ and $l_{i,j}'$ be the complex lengths of the orthogonals for each pair, respectively. A simple computation using cross ratios shows that $1 = \cosh(l_{i,j}/2) \cosh(l_{i,j}'/2)$. However, since $0 < \Re(l_{i,j}) < L/2$ and $0 < \Re(l_{i,j}') < L/2$, we see that $$1 = \lim_{L \to 0} \cosh(l_{i,j}/2) \cosh(l_{i,j}'/2) = \cos(\Im(l_{i,j}/2)) \cos(\Im(l_{i,j}')/2).$$
Which implies that $\Im(l_{i,j}) \to 0$ as $L \to 0$. In particular, $e_i$ and $e_j$ become coplanar as $L \to 0$. Since $\mathcal{AP}(L,0)$ is the level set of $\Im(l_{i,j}) = 0$ over all $i,j$, we can always find $L_\eps > 0$ so that $\mathcal{A}_{\hat{\theta}_{\gamma}}(L_\eps)$ lies in an $\eps$ neighborhood of $\mathcal{AP}(L_\eps, 0)$. 

Returning to the statement of our Claim, assume that $\mathcal{A}_{\hat{\theta}_{\gamma}}(L)$ is non-empty for all $L > 0$. Pick a sequence $L_k \to 0$, then $\bigcap_k \overline{\mathcal{A}_{\hat{\theta}_{\gamma}}(L_k)}$ is non-empty and lies in $\mathcal{AP}(0, L)$ for all $L > 0$. In fact, we can do better. Let $\delta_{\hat{\theta}_{\gamma}} =  \theta_1 + \cdots +\theta_n - 2\pi$ and take $\delta_{\min} = \min_{\hat{\theta}_{\gamma}} \delta_{\hat{\theta}_{\gamma}}$ for all possible simple loops $\gamma$ on $\Sigma_{\cD}$ contractible in $S$. By Lemma \ref{lem:angles} and the fact that $\eta^*$ has finitely many edges, $\delta_{\min}$ is attained and $\delta_{\min} > 0$. It follows that $\bigcap_k \overline{\mathcal{A}_{\hat{\theta}_{\gamma}}(L_k)} \subset \mathcal{AP}(L, \delta_{\min})$ for all $L > 0$ and all $\gamma$ of interest. It remains to show that there exists $L_{\min}$ such that $\mathcal{AP}(L_{\min}, \delta_{\min}) = \emptyset$.

Let $\mathcal{AP}^o(L, \delta_{\min})$ be the set of $P \in  \mathcal{AP}(L, \delta_{\min})$ such that $e_i(P)$ and $e_j(P)$ do not share any endpoints for all $i \neq j$. Notice that  $\overline{\mathcal{AP}^o(L,  \delta_{\min})} = \mathcal{AP}(L, \delta_{\min})$, so we may work with $\mathcal{AP}^o(L,  \delta_{\min})$.  

Consider the Klein model for $\bH^3$ embedded in $\bR \bP^3$ and pick $P \in \mathcal{AP}^o(L,  \delta_{\min})$. The lines $e_i(P)$ in the Klein model extend to lines in $\bR \bP^3$ that are still pairwise coplanar. Projective geometry tells us that if $\{e_i(P)\}$ do not all lie in a $2$-plane in $\bR \bP^3$, then they must all intersect at one point. Since $\theta_i(P) \geq \theta_{\min} > 0$, our edges cannot lie in a $2$-plane. Further, since they are disjoint in $\bH^3$, their common point of intersection lies outside $\bH^3$ in $\bR \bP^3$. This common point of intersection is dual to a hyperbolic plane $Q$ that is perpendicular to all $e_i(P)$. Since $T_P$ is embedded, $Q \cap T_P$ is a convex hyperbolic polygon with exterior angles $\theta_i(P)$ and with perimeter of length $\leq L$. Gauss-Bonnet gives that $\mathrm{area}(Q \cap T_P) + 2\pi = \sum_i \theta_i(P)$ and therefore $\mathrm{area}(Q \cap T_P) \geq \delta_{\min}$. However, by the isoperimetric inequality, we can find $L_{\min}$ such that $\mathrm{area}(Q \cap T_P) < \delta_{\min}$. Therefore, there exists exists $L_{\min}$ such that $\mathcal{AP}(L_{\min}, \delta_{\min}) = \emptyset$.

It follows that for some $L_0 > 0$, $\mathcal{A}_{\hat{\theta}_{\gamma}}(L_0) = \emptyset$ for all $\gamma$. \qed

Taking Claim 1 and Claim 2 together, we obtain the desired result.
\end{proof}

For non-contractible  curves, we have the following weaker result.

\begin{prop} \label{prop:non-contractible}
Let $(\eta, \theta)$ be an admissible pair for $S$ and let $K\subset \cT$ be a compact subset. There exists a constant $L_1>0$ depending only on $(\eta, \theta)$ and $K$ such that if $(\sigma,\cD)\in \cC_{\eta,\theta}$ with $f(\sigma)\in K$ and $\gamma$ is closed geodesic on $\Sigma_{\cD}$ which is not contractible in $S$, then the length of $\gamma$ is at least $L_1$.
\end{prop}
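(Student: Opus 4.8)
The plan is to push a closed geodesic on $\Sigma_\cD$ down onto the concave pleated boundary $\partial_0 E(\sigma)$, so that its length is bounded below by the $h_\sigma$-geodesic length of a nontrivial homotopy class, and then to use $f(\sigma)\in K$ together with Lemma~\ref{lem:lambda} to confine the metrics $h_\sigma$ to a compact part of $\cT$.

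\emph{Step 1: a retraction estimate.} Since $\Sigma_\cD=\partial_0 E(\sigma\smallsetminus V_\cD)$ and $E(\sigma\smallsetminus V_\cD)\subset E(\sigma)$, the region $\hull_E(V_\cD)$ lies between $\partial_0 E(\sigma)$ and $\Sigma_\cD$, so away from its cusps $\Sigma_\cD$ sits inside $E(\sigma)$ on the side of $\partial_0 E(\sigma)$ facing the complete end. Let $r_\sigma\colon\overline{\wt{E}(\sigma)}\to\partial_0\wt{E}(\sigma)$ be the nearest point retraction onto the concave boundary --- the point where the maximal metric ball about $x$ inside $\wt{E}(\sigma)$ touches $\partial_0\wt{E}(\sigma)$; by concavity this is well defined, $\pi_1 S$-equivariant and length non-increasing for the ambient metric \cite{epstein-marden,bridgeman-canary}. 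A closed geodesic $\gamma$ on $\Sigma_\cD$ which is not contractible in $S$ is compact, hence avoids the cusps of $\Sigma_\cD$, and sliding its points to their images under $r_\sigma$ traces a free homotopy inside $\overline{E(\sigma)}$ from $\gamma$ to $r_\sigma(\gamma)\subset\partial_0 E(\sigma)$; thus $r_\sigma(\gamma)$ represents the same nontrivial class $[\gamma]\in\pi_1 E(\sigma)=\pi_1 S$. As the metrics of $\Sigma_\cD$ and of $\partial_0 E(\sigma)$ are both restrictions of the ambient metric,
\[ \mathrm{length}_{\Sigma_\cD}(\gamma)\ \ge\ \mathrm{length}_{h_\sigma}\!\bigl(r_\sigma(\gamma)\bigr)\ \ge\ \ell_{h_\sigma}([\gamma])>0. \]
It therefore suffices to show that $\mathcal{K}':=\{\,h_\sigma : (\sigma,\cD)\in\cC_{\eta,\theta},\ f(\sigma)\in K\,\}$ is precompact in $\cT$, for then $L_1:=\inf_{h\in\overline{\mathcal{K}'}}\mathrm{sys}(h)>0$ does the job.

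\emph{Step 2: reduction to properness of grafting along a compact set of laminations.} Write $\sigma=\Gr(h_\sigma,\mu_\sigma)$, so $f(\sigma)=\gr(h_\sigma,\mu_\sigma)$ (Remark~\ref{rk:grafting}); by Lemma~\ref{lem:lambda} the laminations $\mu_\sigma$ occurring in $\cC_{\eta,\theta}$ lie in a compact set $M\subset\cML$. So it is enough to prove that $\gr$ restricted to $\cT\times M$ is proper, i.e. that $h_k\to\infty$ in $\cT$ with $\mu_k\in M$ forces $\gr(h_k,\mu_k)\to\infty$. If $\mathrm{sys}(h_k)\to0$, pick a short curve $\alpha$; its standard collar in $(S,h_k)$ has conformal modulus $\to\infty$, grafting along $\mu_k$ only inserts flat strips transverse to $\mu_k$ into this collar, and since $i(\alpha,\mu_k)$ is bounded ($i(\alpha,\cdot)$ continuous, $M$ compact) the flat area inserted into the collar is controlled against its diverging width; applying $\mathrm{mod}\ge L^2/\mathrm{Area}$ to the metric that is hyperbolic on the collar and flat on the inserted strips shows that the corresponding annulus in $\gr(h_k,\mu_k)$ still has modulus $\to\infty$, so $\ell_{\gr(h_k,\mu_k)}(\alpha)\to0$. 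If instead $[h_k]$ stays in a compact part of moduli space, write $h_k=\phi_k h'_k$ with $h'_k$ bounded in $\cT$ and $\phi_k\to\infty$ in $\mathrm{Mod}(S)$; equivariance gives $\gr(h_k,\mu_k)=\phi_k\,\gr(h'_k,\phi_k^{-1}\mu_k)$, and whether $\phi_k^{-1}\mu_k$ stays bounded in $\cML$ or not, the right-hand side leaves every compact subset of $\cT$ --- using proper discontinuity of the $\mathrm{Mod}(S)$-action in the first case, and properness of grafting $h'_k$ along a divergent sequence of laminations in the second.

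The main obstacle is the modulus estimate in the first case: bounding how much flat area grafting along $\mu_k$ deposits into the deep collar of $\alpha$. Leaves of $\mu_k$ may wind inside the collar before, or without, crossing the core $\alpha$, so the naive bound ``area $\lesssim i(\alpha,\mu_k)\times(\text{collar width})$'' is not immediate; the remedy is to bound the length of each arc of $\mu_k$ inside the collar by a universal multiple of its number of crossings with $\alpha$ plus the collar width, and to note that both the transverse measure of $\mu_k$ along $\alpha$ and --- after isotoping $\alpha$ to a boundary circle --- the measure seen by an arc spanning the collar are at most $i(\alpha,\mu_k)=O(1)$. Everything else is routine: the retraction estimate is elementary convex hyperbolic geometry, and Mumford's compactness criterion plus continuity of $\Gr$, $\gr$, $L$ and of geodesic and extremal lengths close the argument.
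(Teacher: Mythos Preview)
Your Step 1 is correct and matches the paper: the nearest-point (orthogonal) retraction from $\Sigma_\cD$ to $\partial_0 E(\sigma)$ is distance non-increasing, so the length of a non-contractible closed geodesic on $\Sigma_\cD$ dominates the $h_\sigma$-length of its free homotopy class. Both arguments then reduce to showing that $\{h_\sigma : (\sigma,\cD)\in\cC_{\eta,\theta},\ f(\sigma)\in K\}$ is relatively compact in $\cT$.

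For Step 2 the paper takes a much shorter route. It invokes Scannell--Wolf \cite{scannell-wolf}: for each fixed $\mu$ the map $\gr(\cdot,\mu):\cT\to\cT$ is a diffeomorphism, so $G(h,\mu)=(\gr(h,\mu),\mu)$ is a continuous bijection of $\cT\times\cML$, hence a homeomorphism by invariance of domain. Then $\{h_\sigma\}\subset\Pi_1\bigl(G^{-1}(K\times K_\cML)\bigr)$, with $K_\cML$ the compact from Lemma~\ref{lem:lambda}, and relative compactness is immediate.

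Your direct properness argument, by contrast, has a genuine gap in Case~2b. You write $\gr(h_k,\mu_k)=\phi_k\,\gr(h'_k,\phi_k^{-1}\mu_k)$ with $h'_k$ bounded, $\phi_k\to\infty$, and $\phi_k^{-1}\mu_k\to\infty$, and then argue that ``properness of grafting $h'_k$ along a divergent sequence of laminations'' forces the right-hand side to diverge. But divergence of $\gr(h'_k,\phi_k^{-1}\mu_k)$ in $\cT$ does \emph{not} imply divergence of $\phi_k\cdot\gr(h'_k,\phi_k^{-1}\mu_k)$: the mapping classes $\phi_k$ may exactly undo the escape. Concretely, nothing you have said excludes $\gr(h'_k,\phi_k^{-1}\mu_k)=\phi_k^{-1}c_k$ with $c_k\in K$; this is compatible with $\phi_k^{-1}\mu_k\to\infty$ and $h'_k$ bounded, yet it gives $\gr(h_k,\mu_k)=c_k\in K$ --- precisely the scenario you are trying to rule out. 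The argument is circular at this point. (Even the input ``$\gr(h,\cdot)$ is proper for $h$ in a compact set'' is asserted without reference; it is true, but it is already of the same order of difficulty as what you need.) Your Case~1 modulus estimate is also only a sketch: as you acknowledge, controlling the flat area that grafting deposits inside a thin collar when leaves of $\mu_k$ spiral without crossing the core requires real work.

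I would replace your Step~2 entirely by the Scannell--Wolf plus invariance-of-domain argument; it is a two-line substitute for an analysis that, done honestly, would be longer than the rest of the proof.
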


\begin{proof}
Let $\eta, \theta$ be fixed. We begin by showing that the set $M = \{ h_\sigma \mid \sigma \in \cC_{\eta,\theta}\} \subset \cT$ is relatively compact. Recall that $h_\sigma$ is the metric on the finite boundary of the hyperbolic end defined by $\sigma$ and $\mu_\sigma$ is the measured bending lamination.

Recall that the {\em grafting map} $\Gr : \cT \times \cML \to \cC$ is a homeomorphism that gives rise to the continuous surjection $\gr : \cT \times \cML \to \cT$ given by $\gr = f \circ \Gr$, see \cite{dumas-survey, thurston-notes, kamishima-tan} for details. Consider the map $G : \cT \times \cML \to  \cT \times \cML$ given by $G(w,\mu) = \left(\gr(w, \mu), \mu \right)$. Scannell and Wolf \cite{scannell-wolf} showed that for fixed $\mu \in \cML$, the map $\gr_\mu = \gr(\cdot, \mu)$ is a diffeomorphism and therefore $G$ is a continuous bijection. Since $\cML$ and $\cT$ are both homeomorphic to $\bR^{6g-g}$, invariance of domain tells us that $G$ is a homeomorphism.

By Lemma \ref{lem:lambda}, there exists a compact subset $K_\cML \subset \cML$ such that for all $(\sigma,\cD) \in \cC_{\eta,\theta}$, one has $\mu_\sigma \in K_\cML$. By construction, $\gr(h_\sigma, \mu_\sigma) = f \circ \Gr(h_\sigma, \mu_\sigma) \in K$ for all $\sigma \in \cC_{\eta,\theta}$, so $M \subset \Pi_1(G^{-1}(K \times K_\cML))$, where $\Pi_1$ is the projection to the $\cT$ factor. Since $\Pi_1 \circ G^{-1}$ is continuous, $M$ is relatively compact.

This implies that there exists a constant $L_1>0$ such that for all $h_\sigma \in M$, the lengths of all closed geodesics in $(S, h_\sigma)$ are bounded from below by $L_1$.

 For $(\sigma,\cD)\in \cC_{\eta,\theta}$, let $\gamma$ be a closed geodesic on $\Sigma_\cD$ which is not contractible in $S$. The orthogonal projection from $\Sigma_\cD$ to $\partial_0 E(\sigma)$ is a contraction and, therefore, the image of $\gamma$ by this orthogonal projection has length at least $L_1$. It follows that the length of $\gamma$ on $\Sigma_\cD$ is also at least equal to $L_1$.
\end{proof}

\subsection{Proof of the main result}

Fix an admissible pair $(\eta, \theta)$ on $S$. We consider a compact subset  $K$ of $\cT$, and let $K_\cC$ be the projection to $\cC$ of $f_{\eta,\theta}^{-1}(K)$.

\begin{lemma} \label{lem:K_C}
 $K_\cC$ is relatively compact in $\cC$. 
\end{lemma}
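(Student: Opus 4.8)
The plan is to show that $K_\cC$, the projection to $\cC$ of $f_{\eta,\theta}^{-1}(K)$, is relatively compact by passing to the hyperbolic-end picture and using the geometric control established in the previous subsections. Concretely, given a sequence $(\sigma_n, \cD_n) \in f_{\eta,\theta}^{-1}(K)$, I want to extract a subsequence along which the associated data converges, and conclude that $\sigma_n$ subconverges in $\cC$. By Proposition \ref{prop:balanced2}, each $(\sigma_n, \cD_n)$ corresponds to a pair $P_{\cD_n} \subset E(\sigma_n)$, an ideal polyhedron in a hyperbolic end, and equivalently to an $(\eta,\theta)$-balanced metric $h_{\cD_n} \in \cB_S^{\eta,\theta} \subset \cT_{g,n}$. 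The strategy is to control all the pieces: (i) the bending lamination $\mu_{\sigma_n}$, (ii) the metric $h_{\sigma_n}$ on $\partial_0 E(\sigma_n)$, and (iii) the polyhedral surface $\Sigma_{\cD_n}$, and then reassemble via the grafting homeomorphism.

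First I would recall that $\mu_{\sigma_n} = L\circ\Pi_1(\sigma_n,\cD_n)$ lies in a compact subset $K_\cML \subset \cML$ by Lemma \ref{lem:lambda}, so after passing to a subsequence $\mu_{\sigma_n} \to \mu_\infty$. Next, by the argument in the proof of Proposition \ref{prop:non-contractible} (using Scannell--Wolf's result that $G(w,\mu) = (\gr(w,\mu),\mu)$ is a homeomorphism together with $\gr(h_{\sigma_n},\mu_{\sigma_n}) = f(\sigma_n) \in K$), the metrics $h_{\sigma_n}$ lie in a relatively compact subset of $\cT$, so after a further subsequence $h_{\sigma_n} \to h_\infty \in \cT$. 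Since $\sigma_n = \Gr(h_{\sigma_n}, \mu_{\sigma_n})$ and grafting $\Gr : \cT\times\cML \to \cC$ is continuous (indeed a homeomorphism by \cite{kamishima-tan}), we get $\sigma_n \to \sigma_\infty := \Gr(h_\infty,\mu_\infty) \in \cC$. It then remains to check that $\sigma_\infty$ is genuinely a limit inside $K_\cC$, i.e. that the circle patterns $\cD_n$ also converge to a Delaunay circle pattern $\cD_\infty$ on $(S,\sigma_\infty)$ with $\eta_{\cD_\infty} = \eta$, $\theta_{\cD_\infty} = \theta$, so that $(\sigma_\infty, \cD_\infty) \in \cC_{\eta,\theta}$.

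For the last point, I would use Propositions \ref{prop:contractible} and \ref{prop:non-contractible}: together they give a uniform lower bound $\ell_0 = \min(L_0, L_1) > 0$, depending only on $(\eta,\theta)$ and $K$, on the lengths of \emph{all} closed geodesics on $\Sigma_{\cD_n}$ (contractible or not). Combined with the fixed combinatorial type $(\eta,\theta)$ and the convergence of the ambient ends $E(\sigma_n) \to E(\sigma_\infty)$ (which follows from $\sigma_n \to \sigma_\infty$), this prevents any degeneration of the polyhedral surfaces $\Sigma_{\cD_n}$: no closed curve pinches, so the $(\eta,\theta)$-balanced metrics $h_{\cD_n} \in \cB_S^{\eta,\theta}$ stay in a compact part of $\cT_{g,n}$ (the systole is bounded below, and the diameter is bounded since the area is fixed by Gauss--Bonnet and the combinatorics are fixed). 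Passing to a convergent subsequence $h_{\cD_n} \to h_\infty' \in \cB_S^{\eta,\theta}$ and applying the homeomorphism of Proposition \ref{prop:balanced2} (together with Proposition \ref{pr:correspondence}), the corresponding pairs $(\sigma_n, \cD_n) \to (\sigma_\infty, \cD_\infty) \in \cC_{\eta,\theta}$ in the topology of $\cC_{\eta,\theta}$; in particular $\sigma_\infty \in \Pi_1(\cC_{\eta,\theta})$ and, since $f$ is continuous and $f(\sigma_\infty) = \lim f(\sigma_n) \in K$, we have $(\sigma_\infty, \cD_\infty) \in f_{\eta,\theta}^{-1}(K)$, so $\sigma_\infty \in K_\cC$. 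This shows every sequence in $K_\cC$ has a subsequence converging in $K_\cC$, i.e. $K_\cC$ is relatively compact.

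The main obstacle I anticipate is the step ruling out degeneration of $\Sigma_{\cD_n}$ and identifying the limit as an honest element of $\cC_{\eta,\theta}$ rather than some boundary object: one must be careful that the lower bound on geodesic lengths genuinely lives on $\Sigma_{\cD_n}$ (the polyhedral surface $\partial P_{\cD_n}$, which is bent) rather than only on $\partial_0 E(\sigma_n)$, and that no vertices $V_{\cD_n}$ collide or escape — this is exactly what the uniform systole bound from Propositions \ref{prop:contractible} and \ref{prop:non-contractible} is designed to handle, via the compactness of the relevant slice of $\cT_{g,n}$ and the Gromov--Hausdorff continuity statements in Propositions \ref{pr:correspondence} and \ref{prop:balanced2}. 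The reassembly using the grafting homeomorphism is comparatively routine once the two compactness inputs (on $\mu_\sigma$ and on $h_\sigma$) are in place.
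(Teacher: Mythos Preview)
Your first two paragraphs already contain the paper's entire proof of this lemma: once you know $\mu_{\sigma_n}$ stays in a compact $K_\cML$ (Lemma~\ref{lem:lambda}) and that $G(w,\mu)=(\gr(w,\mu),\mu)$ is a homeomorphism (Scannell--Wolf plus invariance of domain), the inclusion $K_\cC\subset \Gr\bigl(G^{-1}(K\times K_\cML)\bigr)$ is immediate, and the right-hand side is compact because $\Gr\circ G^{-1}$ is continuous. That is exactly the paper's argument, and it suffices: \emph{relatively compact in $\cC$} only requires that every sequence in $K_\cC$ subconverges in $\cC$, not in $K_\cC$ itself.

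Everything from ``It then remains to check that $\sigma_\infty$ is genuinely a limit inside $K_\cC$\ldots'' onward is superfluous for this lemma and is in fact the content of the \emph{subsequent} lemmas in the paper (lower bound on disk radii, compactness of the space of disks, and finally compactness of $f_{\eta,\theta}^{-1}(K)$). By folding that material in here you are essentially trying to prove the full properness theorem rather than this intermediate step. Your closing sentence also conflates definitions: showing every sequence in $K_\cC$ subconverges \emph{in} $K_\cC$ would make $K_\cC$ compact, not merely relatively compact. For the lemma as stated, stop after $\sigma_n\to\sigma_\infty=\Gr(h_\infty,\mu_\infty)\in\cC$.
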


\begin{proof} Using the notation from the proof of Proposition \ref{prop:non-contractible}, it is clear that $K_C \subset \Gr(G^{-1}(K \times K_\cML))$. Since $\Gr \circ G^{-1}$ is continuous, $K_\cC$ is relatively compact.
\end{proof}

\begin{definition}
Given $\sigma\in \cC$ and a round disk $D$ in $(\wt{S},\wt{\sigma})$, the {\it radius} $r_\sig(D)$ of $D$ is defined as the radius of the smallest hyperbolic disk in $(\wt{S}, h)$ containing $D$, where $h$ is the hyperbolic metric compatible with the complex structure of $\wt{\sigma}$. Notice that the radius lies in $(0, \infty]$, as any round disk touching the limit set of $\sig$ has infinite radius.
\end{definition}

\begin{lemma}
  There exists $r_0>0$ such that for any $(\sigma, \cD)\in f_{\eta,\theta}^{-1}(K)$ and for any disk $D\in \cD$, the radius of $D$ is at least $r_0$.
\end{lemma}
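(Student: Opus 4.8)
The plan is to argue by contradiction, using the compactness already established to force the whole Delaunay pattern to converge, and then to observe that a genuine round disk cannot have zero radius. So suppose no such $r_0$ exists: there are $(\sigma_n,\cD_n)\in f_{\eta,\theta}^{-1}(K)$ and disks $D_n\in\cD_n$ with $r_n:=r_{\sigma_n}(D_n)\to 0$.

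First I would pass to the induced metrics on the polyhedral surfaces. Write $h_{\cD_n}\in\cT_{g,n}$ for the hyperbolic metric on $\Sigma_{\cD_n}$, where $n=\#\cV(\eta)$ is fixed by the combinatorics. By Propositions \ref{prop:contractible} and \ref{prop:non-contractible} there is a uniform constant $L_*=\min(L_0,L_1)>0$, depending only on $(\eta,\theta)$ and $K$, so that every closed geodesic of $(S,h_{\cD_n})$ has length at least $L_*$; by Mumford's compactness criterion $\{h_{\cD_n}\}$ is relatively compact in $\cT_{g,n}$, and after extracting a subsequence $h_{\cD_n}\to h_\infty$. After a further extraction, Lemma \ref{lem:K_C} gives $\sigma_n\to\sigma_\infty$ in $\cC$, hence convergence of the hyperbolic ends $\wt E(\sigma_n)\to \wt E(\sigma_\infty)$ and of the uniformizing metrics $f(\sigma_n)\to f(\sigma_\infty)$ on $\wt S$ (uniformly on $\pi_1S$-translates of a compact fundamental domain).

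Next I would identify the limit. Each $h_{\cD_n}$ is $(\eta,\theta)$-balanced (Proposition \ref{prop:balanced2}). Since $\eta$ is a fixed cell decomposition, its geodesic realization on a point of $\cT_{g,n}$ has fixed combinatorics: distinct edges stay distinct and correctly cyclically ordered at each cusp, and each $2$-cell is realized as an ideal polygon of area at least $\pi$, so no cell can collapse; hence the cusp angles $\phi_i(v_f)$ depend continuously on the metric. The balancing equations are therefore closed conditions and $h_\infty\in\cB_S^{\eta,\theta}$. Invoking the homeomorphisms of Propositions \ref{prop:balanced2} and \ref{pr:correspondence}, $h_\infty$ is induced by a unique ideal polyhedron $P_\infty\subset E(\sigma_\infty)$ with combinatorics $(\eta,\theta)$, and the convergence $h_{\cD_n}\to h_\infty$ transports to Gromov--Hausdorff convergence $P_{\cD_n}\to P_\infty$, hence to convergence $(\sigma_n,\cD_n)\to(\sigma_\infty,\cD_\infty)$ in $\cC_{\eta,\theta}$; in particular $D_n\to D_\infty\in\cD_\infty$ in the bundle of round disks. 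Since the radius is a continuous $(0,\infty]$-valued function on that bundle and $D_\infty$ is a genuine round disk, $r_{\sigma_\infty}(D_\infty)>0$, contradicting $r_n\to r_{\sigma_\infty}(D_\infty)$.

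The step I expect to be the main obstacle is showing that the combinatorics $\eta$ persists in the limit, i.e.\ that $\cB_S^{\eta,\theta}$ is closed in $\cT_{g,n}$, since a priori a disk of $\cD_n$ could collapse and change the cell decomposition; the point is exactly that the $2$-cells are geodesic ideal polygons of definite area, so the balancing equations genuinely pass to the limit. If one prefers to avoid any apparent circularity with the correspondence Propositions, there is a more hands-on route: a disk with $r_n\to 0$ develops to a spherical disk shrinking to a point $y_\infty\in\bCP^1$, so its half-space $B_{D_n}$, the face of $P_{\cD_n}$ it supports, and the pleating edges of $P_{\cD_n}$ around that face all concentrate near $y_\infty$, and one argues directly that such a local configuration of half-spaces with the fixed dihedral angles $\theta$ cannot occur, reusing the cylinder and curvature estimates from the proof of Proposition \ref{prop:contractible}. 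I would carry out the first route as the main argument, as it reuses machinery already in place.
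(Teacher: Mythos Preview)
Your proposal is correct but takes a genuinely different route from the paper. The paper's proof is more direct and uses only Proposition \ref{prop:contractible}: after extracting $\sigma_n\to\sigma$ via Lemma \ref{lem:K_C} and noting that $D_n$ shrinks to a point $x_\infty\in\sigma$, it observes that the corresponding face of $P_{\cD_n}$ has at least three ideal vertices all converging to $x_\infty$; a simple closed curve on $\Sigma_{\cD_n}$ separating these vertices from the remaining ones is contractible in $S$ and has length tending to $0$, which contradicts Proposition \ref{prop:contractible} directly. No limit Delaunay pattern is ever constructed.

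Your route instead invokes \emph{both} Propositions \ref{prop:contractible} and \ref{prop:non-contractible} to obtain Mumford compactness of the induced metrics $h_{\cD_n}$ in $\cT_{g,n}$, passes to a limiting balanced metric, and then uses the correspondence of Propositions \ref{prop:balanced2} and \ref{pr:correspondence} to produce a limiting $(\sigma_\infty,\cD_\infty)\in\cC_{\eta,\theta}$. This is sound, and in fact proves more: you have essentially established compactness of $f_{\eta,\theta}^{-1}(K)$ outright, so the bundle-of-disks machinery that follows in the paper becomes redundant. The cost is that you need Proposition \ref{prop:non-contractible} and the closedness of $\cB_S^{\eta,\theta}$, whereas the paper needs neither for this step. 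Amusingly, the ``hands-on route'' you sketch in your final paragraph---a collapsing face forcing a short contractible curve---is precisely what the paper does.
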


\begin{proof}
Suppose that there is no such lower bound and take a sequences $(\sigma_n,\cD_n)\in f_{\eta,\theta}^{-1}(K)$ and $D_n \in \cD_n$ such that $r_{\sig_n}(D_n) \to 0$ as $n \to \infty$. Let $h_n$ be the hyperbolic metric in the conformal class defined by the complex structure of $\sigma_n$. Up to extracting a subsequence, we can suppose that $\sigma_n \to \sigma\in \cC$ by Lemma \ref{lem:K_C} and $D_n$ is corresponds to a fixed face of $\eta$. Since $r_{\sig_n}(D_n) \to 0$, the subsets $D_n \subset \sig_n$ converge to a point $x_\infty \in \sig$.

  Let $E_n = E(\sigma_n)$ be the hyperbolic end associated to $\sigma_n$ and note that $E_n$ converges to $E = E(\sigma)$ in the Gromov-Hausdorff topology. Let $P_n\subset E_n$ be the ideal polyhedron corresponding to the Delaunay circle pattern $\cD_n$. The face $f_n$ of $P_n$ corresponding to $D_n$ converges to the point $x_\infty \in \partial_\infty E$. Therefore, there is a set $Z$ of at least 3 vertices of $P_n$ converging to the same point $x_\infty$.

Let $\gamma$ be a simple closed curve, contractible in $S$, separating $Z$ from the other vertices of $P_n$. Since the $f_n$ converge to an ideal point, the length of $\gamma$ for $h_n$ converges to $0$ as $n \to \infty$. However, this contradicts Proposition \ref{prop:contractible}. Therefore, some lower bound on the radii must exists depending only on $(\eta, \theta)$.
\end{proof}

\begin{definition}\label{def:bundle}
For all $\sigma\in \cC$, let $\Delta_\sigma$ be the space of disks in $(S,\sigma)$ where the topology is given by the Hausdorff distance on closed sets in the hyperbolic metric $h$ compatible with  the complex structure of  $\wt{\sig}$.
\end{definition}

The space $\Delta_\sigma$ is a manifold as seen from the next Lemma.

\begin{lemma}\label{lem:radius} Let $T_1S = (TS \smallsetminus S \times \{0\})/\bR_{>0}$ be the bundle of tangent directions on $S$. For each $\sigma \in \cC$, there is a continuous surjection $p_\sig : T_1 S \times [0, \infty] \to \Delta_\sigma$ such that the radius of $p(x,a)$ is $a$. 
\end{lemma}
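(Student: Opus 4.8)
The plan is to give an explicit construction of the map $p_\sig$ and verify continuity and surjectivity. First I would describe the construction. A disk $D \in \Delta_\sigma$ is determined by two pieces of data: the location of its ``center'' in a suitable sense, and its radius $a \in (0,\infty]$. When $a < \infty$, by definition $D$ is contained in some smallest hyperbolic disk of radius $a$ in $(\wt S, h)$; this smallest enclosing hyperbolic disk is unique (the smallest enclosing ball in a Hadamard manifold is unique), so $D$ has a well-defined hyperbolic center $z_D \in \wt S$. I would project $z_D$ down to $S$; but to build a map out of $T_1 S$ I want to remember, in addition, a tangent direction. The cleanest approach: given $(x,\xi) \in T_1 S$ with $\xi$ a tangent direction at $x$ and a radius $a \in (0,\infty)$, let $D_{x,\xi,a}$ be the unique round disk whose closure, lifted appropriately, is tangent to the hyperbolic circle of radius $a$ about $x$ at the point determined by $\xi$ — equivalently, the round disk inscribed in that hyperbolic ball touching its boundary in the direction $\xi$. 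Concretely, in a chart to $\bCP^1 = \partial_\infty \bH^3$, a hyperbolic ball $B_h(x,a)$ in the conformal metric corresponds, via the developing map and a M\"obius normalization, to a round disk; among all round disks contained in $B_h(x,a)$ and tangent to $\partial B_h(x,a)$, there is a one-parameter family indexed by the tangency direction $\xi \in T_1 S|_x$, and $a$ recovers the ``size''. For $a = \infty$, $p_\sig(x,\xi,\infty)$ is the round disk whose boundary circle passes through the point of the limit set of $\wt\sigma$ in the direction $\xi$ and is otherwise the limiting object; here I would check this is still well-defined, using that round disks touching the limit set have infinite $h$-radius.

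The key steps, in order: (i) set up the local model — fix a point $x$, pass to normal coordinates for $h$ so that $B_h(x,a)$ is a Euclidean ball, and note via the uniformization/developing map that round disks in $(\wt S,\wt\sigma)$ correspond locally to round disks in $\bC$; (ii) show that the assignment $(x,\xi,a) \mapsto D_{x,\xi,a}$ is a bijection onto the set of round disks with $h$-radius exactly $a$, for $a \in (0,\infty)$ — injectivity is because $(x,a)$ is recovered as the center and radius of the smallest enclosing hyperbolic ball and $\xi$ as the unique boundary tangency direction (a disk strictly inside a minimal enclosing ball would contradict minimality, so the tangency point, hence $\xi$, is determined); (iii) handle $a = \infty$ separately, identifying these with disks tangent to the limit set, and show that together with (ii) this gives a surjection onto all of $\Delta_\sigma$; (iv) verify continuity: as $(x_n,\xi_n,a_n) \to (x,\xi,a)$ the enclosing hyperbolic balls converge in the Hausdorff distance, hence so do the inscribed disks $D_{x_n,\xi_n,a_n}$, including in the limiting case $a_n \to \infty$ where one argues with the Hausdorff topology on closed sets in $(\wt S, h)$ directly; (v) note the radius of $p_\sig(x,\xi,a)$ is $a$ by construction.

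The main obstacle I expect is step (iv) in the degenerate regime — continuity at $a = \infty$ and at the boundary between ``finite radius'' and ``touching the limit set'' disks — since there the Hausdorff-distance topology on $\Delta_\sigma$ is subtle: a sequence of disks with $h$-radii tending to infinity need not converge to anything resembling a ``disk of radius $\infty$'' unless one is careful about how the centers escape. The fix is to note that $[0,\infty]$ is compact, that the closed round disks live in the compact space of closed subsets of $\widehat{\wt S} = \wt S \cup \partial_\infty$ with the Hausdorff metric, and that along any convergent subsequence the limit is again a round disk with the prescribed data, so the map extends continuously; one should also check that distinct tangent directions $\xi$ at $x$ with $a=\infty$ really do give distinct disks, which follows because the boundary circle of $p_\sig(x,\xi,\infty)$ meets the limit set at a point depending injectively on $\xi$. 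Everything else — the local model, the bijection, the radius formula — is a routine unpacking of the definitions of round disk and of the hyperbolic metric compatible with $\wt\sigma$.
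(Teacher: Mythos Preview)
Your construction differs from the paper's in a way that creates a genuine gap. The paper takes the basepoint $q$ of the tangent direction to lie on the \emph{boundary} of the round disk: for $x=(q,\ell)\in T_1S$ it sets
\[
A_x=\{D\in\Delta_\sigma : q\in\partial D,\ \text{outward normal to }D\text{ at }q\text{ lies in }\ell\},
\]
observes that $A_x$ is a one-parameter nested family of round disks on which $r_\sig$ is strictly increasing from $0$ to $\infty$, and defines $p_\sig(x,a)=(r_\sig|_{A_x})^{-1}(a)$. Continuity and surjectivity are then immediate, and the fibers are the circles $\{(q,\ell): q\in\partial D,\ \ell=\text{outward normal at }q\}$.

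You instead take $x$ to be the \emph{center} of the smallest enclosing hyperbolic ball $B_h(x,a)$ and try to single out ``the unique round disk tangent to $\partial B_h(x,a)$ at $\exp_x(a\xi)$.'' This is not well-defined: the round disks internally tangent to a smooth curve at a given point form a one-parameter family, not a single disk. If you interpret ``inscribed'' as the maximal such disk contained in $B_h(x,a)$, you lose the conclusion $r_\sig(D_{x,\xi,a})=a$: the maximal inscribed disk generically touches $\partial B_h(x,a)$ at only two points, which need not pin $B_h(x,a)$ as the \emph{smallest} enclosing ball of that disk, so its $r_\sig$-radius can be strictly smaller than $a$. (Only in the Fuchsian case, where round disks and hyperbolic balls coincide, does your picture collapse to something correct, and there $\xi$ plays no role.) Relatedly, your injectivity argument fails because a smallest enclosing ball always touches the enclosed set in at least two boundary points, so the tangency direction is never unique; but injectivity is in any case not asserted in the lemma. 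The fix is exactly the paper's move: anchor the parametrization at a boundary point of $D$ rather than at the hyperbolic center, so that the family through $(q,\ell)$ is genuinely one-dimensional and monotone in $r_\sig$.
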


\begin{proof} The map $p_\sig$ is defined as follows. An element $x \in T_1(S)$ is given by a point $q \in S$ and a ray $\ell$ of tangent vectors through $q$. Consider $$A_x = \{D \in \Delta_\sig \mid q \in \partial D \text{ and the outward normal to } D \text{ at } q \text{ lies in } \ell\}.$$ Since $\dev_\sig(D)$ is a disk in $\bCP^1$, the set $A_x$ is determined by one real parameter. In fact, $A_x$ is a one-parameter family of nested closed disks. The maximal disk in this family has radius $\infty$ and for distinct $D^0, D^1 \in A_x$ with $D^0 \subset D^1$, we have $r_\sig(D^0) < r_\sig(D^1)$. We can the therefore parametrize $A_x$ by $\left.r_\sig\right|_{A_x}^{-1}$. Let $p_\sig(x,a)$ be the disk $\left.r_\sig\right|_{A_x}^{-1}(a)$. By the definition of $r_\sig$, $p_\sig$ is a continuous surjection.
\end{proof}

The fibers of $p_\sig$ are circles which give a simple foliation of $T_1S \times (0,\infty]$. Thus, $\Delta_\sigma$ can be see as the leaf-space of this foliation. 

Since continuous deformations $\sig_t \in \cC$ give continuous deformations of the hyperbolic metrics $h_t$ on $\wt{S}$ compatible with $\wt{\sig}_t$, there is a natural homeomorphism $\Delta_{\sigma_0} \cong \Delta_{\sig_t}$ and we can define the trivial bundle $\Delta=\cup_{\sigma\in \cC}\Delta_\sigma$.

\begin{cor}
 There is a compact subset $K_\Delta$ of $\Delta$ containing $(\sigma,D)$ for all disks $D\in \cD$ when $(\sigma,\cD)\in f_{\eta,\theta}^{-1}(K)$.
\end{cor}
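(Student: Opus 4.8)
The plan is to assemble the three facts that have just been proved into a single packaging statement. By Lemma \ref{lem:K_C} the set $K_\cC$ is relatively compact, so its closure $\overline{K_\cC}\subset\cC$ is compact; by the preceding lemma every disk $D$ occurring in a Delaunay circle pattern $\cD$ with $(\sigma,\cD)\in f_{\eta,\theta}^{-1}(K)$ satisfies $r_\sigma(D)\ge r_0$; and by Lemma \ref{lem:radius} each fiber $\Delta_\sigma$ is parametrized by the surjection $p_\sigma:T_1S\times[0,\infty]\to\Delta_\sigma$, with $r_\sigma(p_\sigma(x,a))=a$. First I would record that $T_1S$ is compact (as $S$ is closed) and that $p_\sigma$ varies continuously with $\sigma$ --- this is exactly the trivialization $\Delta\cong\cC\times\Delta_{\sigma_0}$ coming from the continuous dependence of the Poincar\'e metric $h$ on $\wt\sigma$ --- so that $(\sigma,x,a)\mapsto(\sigma,p_\sigma(x,a))$ is a continuous map $\cC\times T_1S\times[0,\infty]\to\Delta$.

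Next I would simply define
$$ K_\Delta=\{\,(\sigma,p_\sigma(x,a))\;:\;\sigma\in\overline{K_\cC},\ x\in T_1S,\ a\in[r_0,\infty]\,\}, $$
the image under the map above of $\overline{K_\cC}\times T_1S\times[r_0,\infty]$. This source set is a product of compact spaces --- note $[r_0,\infty]$ is a closed, hence compact, subinterval of $[0,\infty]$, so disks of infinite radius cause no trouble --- and $\Delta$ is Hausdorff, so $K_\Delta$ is compact. Finally, for $(\sigma,\cD)\in f_{\eta,\theta}^{-1}(K)$ and $D\in\cD$ one has $\sigma\in K_\cC\subset\overline{K_\cC}$ and $r_\sigma(D)\ge r_0$; choosing a boundary point $q\in\partial D$ with outward normal direction $x\in T_1S$ and setting $a=r_\sigma(D)\in[r_0,\infty]$ gives $D=p_\sigma(x,a)$ by Lemma \ref{lem:radius}, so $(\sigma,D)\in K_\Delta$, which is the claim.

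I do not anticipate a real obstacle: the statement is essentially bookkeeping, and in fact one could even take $K_\Delta=\overline{K_\cC}\times\Delta_{\sigma_0}$, since $\Delta_{\sigma_0}=p_{\sigma_0}(T_1S\times[0,\infty])$ is already compact --- the radius bound $r_0$ only serves to cut $K_\Delta$ down to disks bounded away from radius $0$, which will be convenient later. The one point deserving a line of care is the joint continuity of $(\sigma,x,a)\mapsto(\sigma,p_\sigma(x,a))$, equivalently that the identifications $\Delta_{\sigma_0}\cong\Delta_{\sigma_t}$ of Lemma \ref{lem:radius} fit together into a genuine bundle homeomorphism $\Delta\cong\cC\times\Delta_{\sigma_0}$; this follows from the smooth dependence of $h$ on the complex projective structure together with the explicit description of $p_\sigma$, and requires no new ideas.
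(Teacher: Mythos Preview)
Your proof is correct and follows essentially the same approach as the paper: both use the compactness of $\overline{K_\cC}$ from Lemma \ref{lem:K_C}, the radius lower bound $r_0$, and the surjection $p_\sigma$ of Lemma \ref{lem:radius} to realize the relevant disks as the continuous image of the compact set $\overline{K_\cC}\times T_1S\times[r_0,\infty]$. You are simply more explicit about the joint continuity of $(\sigma,x,a)\mapsto(\sigma,p_\sigma(x,a))$ and the bundle trivialization, which the paper treats tersely; one small aside --- the Hausdorff hypothesis on $\Delta$ is unnecessary, since continuous images of compact sets are compact in any topological space.
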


\begin{proof}
 
The key observation is that for all $\sigma \in \cC$, the space of embedded round disks in $(S,\sigma)$ of radius at least $r_0 > 0$ is compact. Indeed, by Lemma \ref{lem:radius}, this set is $p_\sig\left(T_1S \times [r_0,\infty]\right)$, which is compact by the continuity of $p_\sig$.

As a consequence, since $K_\cC$ is compact, the space of embedded open disks in $(S,\sigma)$ for all $\sigma\in K_\cC$ is again compact. However we already know that if $(\sigma,\cD)\in f_{\eta,\theta}^{-1}(K)$ and $D\in \cD$ then the radius of $D$ is at least $r$. The result follows.
\end{proof}

\begin{cor}
  $f_{\eta,\theta}^{-1}(K)$ is compact.
\end{cor}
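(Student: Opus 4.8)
The plan is to assemble the preceding corollaries into a standard sequential-compactness argument. We want to show $f_{\eta,\theta}^{-1}(K)$ is compact, so take a sequence $(\sigma_n, \cD_n) \in f_{\eta,\theta}^{-1}(K)$ and extract a convergent subsequence with limit in $f_{\eta,\theta}^{-1}(K)$. First, by Lemma \ref{lem:K_C}, $K_\cC$ is relatively compact in $\cC$, so after passing to a subsequence we may assume $\sigma_n \to \sigma \in \cC$; moreover since $f$ is continuous and $f(\sigma_n) = f_{\eta,\theta}(\sigma_n,\cD_n) \in K$ with $K$ closed, we get $f(\sigma) \in K$. Next, for each disk $D \in \cD_n$, the pair $(\sigma_n, D)$ lies in the compact subset $K_\Delta \subset \Delta$ provided by the Corollary above. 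Since $\eta$ has only finitely many faces and each $\cD_n$ has one disk per face of $\eta$ (indexed by the vertices of $\eta^*$), after passing to a further subsequence we may assume that for each face $f$ of $\eta$ the corresponding disk $D_n^f \in \cD_n$ converges in $K_\Delta$ to a disk $D^f$ in $(S,\sigma)$. Set $\cD = \{D^f\}$.

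The remaining work is to check that $\cD$ is a genuine Delaunay circle pattern on $(S,\sigma)$ with $\eta_\cD = \eta$ and $\theta_\cD = \theta$, and that $(\sigma_n,\cD_n) \to (\sigma,\cD)$ in $\cC_{\eta,\theta}$. I would do this via the polyhedral picture of Proposition \ref{pr:correspondence}: each $\cD_n$ corresponds to an ideal polyhedron $P_n \subset E(\sigma_n)$ with fixed combinatorics $\eta$ and exterior dihedral angles $\theta$, and since $E(\sigma_n) \to E(\sigma)$ in the Gromov--Hausdorff topology and the disks converge, the polyhedra $P_n$ converge (in the Gromov--Hausdorff sense) to a limiting polyhedron $P \subset E(\sigma)$ with the same combinatorics $\eta$ and angles $\theta$. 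The key point that must be verified is that no degeneration occurs in the limit --- that the vertices of $P$ remain distinct and that the induced measured lamination $\lambda_E(V)$ stays filling and ideal, so that $P$ is still an honest ideal polyhedron rather than a collapsed object. This is precisely where Propositions \ref{prop:contractible} and \ref{prop:non-contractible} enter: they give a uniform lower bound $\min(L_0, L_1) > 0$ on the length of \emph{every} closed geodesic on $\Sigma_{\cD_n}$ (contractible ones via $L_0$, non-contractible ones via $L_1$, using that $f(\sigma_n) \in K$), which rules out the formation of short curves and hence any collapse of faces or vertices in the limit. With no degeneration, Proposition \ref{pr:correspondence} (the homeomorphism statement for fixed combinatorics) identifies $P$ with a Delaunay circle pattern $\cD$ having $\eta_\cD = \eta$, $\theta_\cD = \theta$, so $(\sigma,\cD) \in \cC_{\eta,\theta}$, and the same homeomorphism statement gives $(\sigma_n,\cD_n) \to (\sigma,\cD)$. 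Finally $f_{\eta,\theta}(\sigma,\cD) = f(\sigma) \in K$, so the limit lies in $f_{\eta,\theta}^{-1}(K)$, proving compactness.

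The main obstacle I anticipate is the no-collapse verification: one must argue that a Gromov--Hausdorff limit of the ideal polyhedra $P_n$ (with uniformly bounded combinatorics, bounded dihedral angles bounded away from $0$ and $\pi$, and now a uniform lower bound on geodesic lengths on the boundary surfaces) is again an ideal polyhedron of the same combinatorial type --- in particular that the $n$ faces do not partially merge and no vertex escapes to infinity or collides with another. The uniform geodesic-length bound handles collisions of vertices (a cluster of $\geq 3$ colliding vertices would be surrounded by a vanishing contractible curve, contradicting Proposition \ref{prop:contractible}), while the relative compactness of $\{h_\sigma\}$ and of $K_\Delta$ handles escape to infinity; the bookkeeping that these together force the full combinatorial type to be preserved, and that the convergence is the one recorded by the topology on $\cC_{\eta,\theta}$, is the part that requires care.
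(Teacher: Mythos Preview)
Your proposal is correct and follows essentially the same approach as the paper: combine the relative compactness of $K_\cC$ (Lemma \ref{lem:K_C}) with the compact disk-space $K_\Delta$ to extract a limit, then observe that fixed combinatorics and angles are closed conditions. The paper compresses all of your no-collapse discussion into a single sentence, relying on the already-established radius lower bound (which encoded Proposition \ref{prop:contractible}); your explicit invocation of Proposition \ref{prop:non-contractible} is harmless but redundant, since its content has already been absorbed into Lemma \ref{lem:K_C}.
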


\begin{proof}
  The space of possible circle patterns with the given combinatorics, intersection angles, and underlying complex projective structure in $K_C$, is compact, and therefore $f_{\eta,\theta}^{-1}(K)$ is compact.
\end{proof}

\section{Acknowledgements.}
The first author was partially supported by University of Luxembourg IRP NeoGeo and by FNR projects INTER/ANR/15/11211745 and OPEN/16/11405402. He also acknowledges support from U.S. National Science Foundation grants DMS-1107452, 1107263, 1107367 ``RNMS: GEometric structures And Representation varieties'' (the GEAR Network). Both authors would like to thank Ser Peow Tan for useful discussions and motivation.

\bibliographystyle{plain}
\bibliography{biblio}

\end{document}